\newtheorem{thm}{Theorem}[section]
\newtheorem{prop}[thm]{Proposition}
\newtheorem{lem}[thm]{Lemma}
\newtheorem{sublem}[thm]{Sublemma}
\newtheorem{lem-def}[thm]{Lemma-Definition}
\newtheorem{cor}[thm]{Corollary}
\theoremstyle{remark}
\newtheorem{ex}[thm]{Example}
\newtheorem{rmk}{Remark}[section]
\theoremstyle{definition}
\newtheorem{dfn}{Definition}[section]
\numberwithin{equation}{section}
\newcommand{\frakg}{{\mathfrak g}}
\newcommand{\bbG}{{\mathbb G}}
\newcommand{\bbL}{{\mathbb L}}
\newcommand{\calA}{{\mathcal A}}
\newcommand{\calB}{{\mathcal B}}
\newcommand{\calC}{{\mathcal C}}
\newcommand{\calD}{{\mathcal D}}
\newcommand{\calF}{{\mathcal F}}
\newcommand{\calG}{{\mathcal G}}
\newcommand{\calH}{{\mathcal H}}
\newcommand{\calI}{{\mathcal I}}
\newcommand{\calK}{{\mathcal K}}
\newcommand{\calL}{{\mathcal L}}
\newcommand{\calM}{{\mathcal M}}
\newcommand{\calO}{{\mathcal O}}
\newcommand{\calP}{{\mathcal P}}
\newcommand{\calQ}{{\mathcal Q}}
\newcommand{\calR}{{\mathcal R}}
\newcommand{\calT}{{\mathcal T}}
\newcommand{\calX}{{\mathcal X}}
\newcommand{\calY}{{\mathcal Y}}
\newcommand{\calZ}{{\mathcal Z}}
\newcommand{\End}{{\mathrm{End}}}
\newcommand{\Hom}{{\mathrm{Hom}}}
\newcommand{\Ext}{{\mathrm{Ext}}}
\newcommand{\Lie}{{\mathrm{Lie}}}
\newcommand{\pic}{{\mathrm{Pic}}}
\newcommand{\pr}{{\mathrm{pr}}}
\newcommand{\spec}{{\mathrm{Spec}}}
\newcommand{\Mod}{{\mathrm{-Mod}}}
\newcommand{\GL}{{\bbG\bbL}}
\newcommand{\gl}{{\mathfrak g\mathfrak l}}
\newcommand{\nc}{\newcommand}
\nc{\on}{\operatorname}
\title[The 2-group of auto-equivalences of abelian categories]{The 2-group of linear auto-equivalences of an abelian category and its Lie 2-algebra}\thanks{Supported by DARPA through the grant HR0011-09-1-0015.}
\begin{document}

\author{Xinwen Zhu}

\address{Department of
Mathematics, Harvard University, Cambridge, MA 02138, USA}
\email{xinwenz@math.harvard.edu}
\date{September, 2009}
\maketitle

\begin{abstract}For any abelian category $\calC$ satsifying (AB5) over a separated, quasi-compact scheme $S$ , we construct a stack of 2-groups $\GL(\calC)$ over the flat site of $S$. We will give a concrete description of $\GL(\calC)$ when $\calC$ is the category of quasi-coherent sheaves on a separated, quasi-compact scheme $X$ over $S$. We will show that the tangent space $\gl(\calC)$ of $\GL(\calC)$ at the origin has a structure as a Lie 2-algebra.
\end{abstract}

\tableofcontents

\section*{Introduction}

In nowadays mathematics, much of the research on certain algebraic
object is to study its representation theory. On the one hand, the
representation theory of an algebraic object reveals some of its
profound structures hidden underneath. A good example is that the structure
of a complex semi-simple Lie algebra is much revealed via its
representation theory. On the other hand, the representation spaces
themselves usually contain significant meanings. For example, the space of $L^2$ functions on an adelic group, regarded as the
left regular representation of the group, plays a
fundamental role in harmonic analysis and arithmetics.

Let us recall the classic set-up of a representation of an algebraic
structure. Let $A$ be an algebraic structure. For example, $A$
could be a group, or an algebra, or a Lie algebra. A
representation usually involves a vector space $V$ and a
homomorphism from $A$ to $\End(V)$ or $GL(V)$, preserving the
algebraic structure. In any case, our algebraic structure acts on a \emph{vector space}.

In recently years, there are many clues that the basic set-up of representation theory must be widen. Namely, we should allow the representation spaces not only to be vector spaces, but also to be categories (or even higher categories). There are two main motivations for this. The first comes from the procedure so-called categorification (or known as geometrization). This procedure usually involves replacing the original representation space $V$ by an abelian (or additive) category $\calC$ on which the algebraic structure acts, and whose Grothendieck group $K(\calC)$, after tensoring the field coefficient, gives rise back to the original vector space. The second motivation is that the algebraic structure itself is so complicated that the classical representation theory of the algebraic structure is probably just not the right thing to consider. For instance, in the local geometrical Langlands correspondence, the de Rham local system on the punctured disc (the Langlands parameters) should parameterize certain representations of the loop group, which at first thought should be (smooth) representations of the group. However, there are just too few such representations. It turn out the right thing to parameterize is the abelian categories with an action of the loop group (see \cite{FG,Fr} for an explicit proposal of these categories). Another example is the representation theory of double loop groups and algebras (cf. \cite{FZ1,FZ2}). As explained in \emph{loc. cit.}, if one insists on studying their representations in the classical sense, he will encounter horrible infinity. On the other hand, such infinity is in some sense absorbed in a cohomology class lying in the third group cohomology. To realize such class, one should require the double loop group acts (gerbally) on some abelian category (see \cite{FZ1} for a description of this category).

\medskip

This paper is aimed to explain how  the idea of representations on categories could work for algebraic groups and Lie algebras. To this end, let us first review the corresponding classical set-up.

Let $V$ be a (finite) dimensional vector space over a field $k$. Let
$GL(V)$ be the group of linear automorphisms of $V$. Let us recall
$GL(V)$ is an algebraic group. In particular, it is a sheaf of
groups over the $(\mathbf{Aff}/k)_{fppf}$. The functor it represents
is as follows. For any $k$-algebra $A$ , $GL(V)(A)$ is the group of
$A$-linear automorphisms of the free $A$-module $A\otimes V$. Now,
an algebraic representation of an algebraic group $G$ over $k$ on
$V$ is (up to conjugation) just an algebraic group homomorphism from
$G$ to $GL(V)$. A projective representation of $G$ on $V$ then is a
homomorphism from $G$ to $PGL(V)$. Furthermore, the tangent space of
$GL(V)$ at the unit, usually denoted by $\gl(V)$, is naturally a Lie
algebra. A representation of a Lie algebra $\frakg$ on $V$ then is
just a Lie algebra homomorphism from $\frakg$ to $\gl(V)$.

In this paper, we will write down a similar construction, with $V$
replaced by a $k$-linear abelian category $\calC$ that satisfies
Grothendieck's axiom (AB5). Namely, we will define $\GL(\calC)$, the
"automorphism group" of $\calC$, as the category of $k$-linear
auto-equivalences of $\calC$.  It has a natural monoidal structure,
which makes it into a 2-group. The first goal of this paper is to
give it a structure as a sheaf of 2-groups over
$(\mathbf{Aff}/k)_{fppf}$. This is possible because when $\calC$
satisfies (AB5), it makes sense to talk about the base change
$\calC_A$ of $\calC$ to any commutative $k$-algebra $A$, and this
construction will give a sheaf of abelian categories over
$(\mathbf{Aff}/k)_{fppf}$. Now define the $A$-points of $\GL(\calC)$
to be the groupoid of $A$-linear auto-equivalences of $\calC_A$.
Once one can define, for every $k$-algebra homomorphism $f:A\to B$,
a monoidal pullback $f^*:\GL(\calC)(A)\to\GL(\calC)(B)$ satisfying
the usual compatibility conditions, he will deduce that $\GL(\calC)$
is indeed a sheaf of 2-groups over $(\mathbf{Aff}/k)_{fppf}$. This
is indeed the case, as will be shown in \S \ref{2-grp of
auto-equiv}.
\begin{thm}Let $\calC$ be a $k$-linear abelian category $\calC$ that satisfies Grothendieck's axiom (AB5). Then $\GL(\calC)$ is a sheaf of 2-groups over $(\mathbf{Aff}/k)_{fppf}$.
\end{thm}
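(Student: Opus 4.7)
My plan is to construct $\GL(\calC)$ as a pseudo-functor $A\mapsto\GL(\calC)(A)$ from commutative $k$-algebras to 2-groups, and then verify the fppf stack condition. The first step is, for every $k$-algebra $A$, to build the base-change abelian category $\calC_A$. Informally this is the category of "$A$-module objects in $\calC$", i.e.\ pairs $(X,\rho)$ with $X\in\calC$ and $\rho:A\to\End_{\calC}(X)$ a $k$-algebra homomorphism, with $A$-linear morphisms; for $A$ not of finite presentation over $k$ one writes $A$ as a filtered colimit of its finitely presented subalgebras and passes to the 2-colimit of the corresponding categories. This is precisely where (AB5) is needed: filtered colimits must exist in $\calC$ and be exact, which is what makes $\calC_A$ abelian and the structural extension-of-scalars functor $\calC\to\calC_A$ well-behaved. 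For a $k$-algebra map $f:A\to B$, the analogous construction yields an extension-of-scalars functor $f^*:\calC_A\to\calC_B$ satisfying $\calC_B\simeq(\calC_A)_B$.

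With the assignment $A\mapsto\calC_A$ in hand, I would define $\GL(\calC)(A)$ to be the groupoid whose objects are $A$-linear auto-equivalences of $\calC_A$ and whose morphisms are invertible natural transformations; composition of functors together with the evident horizontal and vertical composition of natural transformations makes this a 2-group. The pullback $f^*:\GL(\calC)(A)\to\GL(\calC)(B)$ sends an auto-equivalence $F$ of $\calC_A$ to its base change $B\otimes_A F$ as an auto-equivalence of $\calC_B$, and a natural transformation to its evident base change. Checking that this assembles into a pseudo-functor of 2-groups amounts to producing the coherent isomorphisms $(gf)^*\simeq g^*f^*$, the unit 2-morphism, and the compatibility among three composable arrows; these all follow formally from the analogous compatibilities for iterated base change at the level of the categories $\calC_A$, and the monoidal (2-group) structure is preserved because base change commutes with both composition of functors and the identity.

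The main obstacle, and the real content of the theorem, is the stack condition with respect to the fppf topology. Given a faithfully flat map $A\to B$, set $C=B\otimes_A B$ and $D=B\otimes_A B\otimes_A B$. I must verify: (i) any natural isomorphism $F_B\simeq G_B$ in $\GL(\calC)(B)$ whose two pullbacks to $\calC_C$ coincide descends uniquely to a natural isomorphism $F\simeq G$ in $\GL(\calC)(A)$; and (ii) any auto-equivalence of $\calC_B$ equipped with descent data (an isomorphism of its two pullbacks to $\calC_C$ satisfying the cocycle condition on $\calC_D$) descends to an auto-equivalence of $\calC_A$. Both reduce to the more primitive claim that $A\mapsto\calC_A$ is itself an fppf stack of abelian categories, i.e.\ that $\calC_A$ is the 2-limit of the truncated cosimplicial diagram $\calC_B\rightrightarrows\calC_C\Rrightarrow\calC_D$. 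I would prove this by running the classical Amitsur-style faithfully flat descent argument internally to $\calC$: using (AB5) to interchange the relevant limits and colimits, the exactness of the Amitsur complex for a faithfully flat ring map transports to $A$-module objects in $\calC$, yielding both effective descent for objects and faithfulness on morphisms, from which the stack condition for $\GL(\calC)$ follows by applying the same descent to the equivalences themselves viewed as modules over the appropriate endomorphism rings.
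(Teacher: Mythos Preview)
Your overall strategy matches the paper's: define $\calC_A$ as $A$-module objects in $\calC$, set $\GL(\calC)(A)$ to be $A$-linear auto-equivalences of $\calC_A$, and deduce the stack property from faithfully flat descent for the assignment $A\mapsto\calC_A$. Two points, however, are not right as written.

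First, a minor one: your parenthetical about writing $A$ as a filtered colimit of finitely presented subalgebras and taking a 2-colimit of categories is unnecessary and misplaced. The definition $\calC_A=\{(X,\rho:A\to\End_\calC X)\}$ works uniformly for all $A$; (AB5) enters not here but in defining the \emph{extension-of-scalars functor} $f^*:\calC_A\to\calC_B$ via $X\mapsto B\otimes_A X$, where the tensor product is built as a colimit in $\calC$ and its exactness for flat $B$ needs filtered colimits to be exact.

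Second, and this is the real gap: your last sentence, where effective descent for auto-equivalences is supposed to follow ``by applying the same descent to the equivalences themselves viewed as modules over the appropriate endomorphism rings,'' does not correspond to any valid argument. Auto-equivalences are not modules in a way that lets ordinary descent apply. What the paper actually does is the following. The pullback $f^*(F)\in\GL_B(\calC_B)$ is defined concretely by $f^*(F)(X,\varphi)=(F(X),F\circ\varphi)$, and the key technical lemma is that there is a canonical isomorphism
\[
f^*\circ F\;\cong\;f^*(F)\circ f^*:\calC_A\to\calC_B,
\]
proved using that an auto-equivalence preserves the colimits defining $B\otimes_A(-)$. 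This lemma is exactly what lets an auto-equivalence $F$ of $\calC_B$ equipped with descent data $u:i_1^*(F)\cong i_2^*(F)$ act on the descent category: for $(X,v)\in Desc_{B\otimes_AB}(\calC_B)$ one transports $v$ along the isomorphisms $i_k^*(F(X))\cong i_k^*(F)(i_k^*X)$ to get a descent datum on $F(X)$, and then the equivalence $\calC_A\simeq Desc_{B\otimes_AB}(\calC_B)$ yields the descended $F_0\in\GL_A(\calC_A)$. For the Hom-sheaf condition the paper observes that $\Hom(F,G)$ is a torsor under $\End(I_A)$, and the latter is represented by a scheme affine over $\spec A$, hence a sheaf. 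Your sketch is missing both the precise definition of $f^*$ on $\GL(\calC)$ and this intertwining lemma, without which the passage from descent for $\calC$ to descent for $\GL(\calC)$ does not go through.
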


The first example of this construction is when $\calC=\on{Qcoh}(X)$
is the category of quasi-coherent sheaves over a quasi-compact,
separated $k$-scheme $X$. We will show in \S \ref{example} that
$\GL(\on{Qcoh}(X))$ is a semi-direct product of the Picard stack of
$X$ and the sheaf of $k$-automorphisms of $X$.

After endowing $\GL(\calC)$ with a structure of 2-groups over $(\mathbf{Aff}/k)_{fppf}$, we can study it's tangent space (i.e. tangent stack) at the unit. Let us first recall that if $G$ is an algebraic group over $k$, then its tangent space at the unit has a natural Lie algebra structure. However, if $G$ is only a group functor over $\mathbf{Aff}/k$, it is a subtle issue to give $\Lie G:=\ker(G(k[\varepsilon]/\varepsilon^2)\to G(k))$ a Lie algebra structure. In \cite{Dem}, certain conditions were proposed to guarantee that $\Lie G$ has a natural Lie algebra structure. A group $G$ that satisfies these conditions is called \emph{good} in \emph{loc. cit.} Let us remark that these conditions are satisfied if $G$ takes product of rings to product of groups. Therefore, all algebraic groups are good.

Now let $\calG$ be a sheaf of 2-groups over $(\mathbf{Aff}/k)_{fppf}$\footnote{All the discussions apply to presheaves of 2-groups.}. The conditions given in \cite{Dem} have natural generalizations to this case (cf. \S \ref{2-grps to Lie 2-algs}, Definition \ref{Cond(E)} and Definition \ref{good}). Assume that $\calG$ is good, i.e. it satisfies these conditions, then
\[\Lie(\calG):=\ker(\calG(k[\varepsilon]/\varepsilon^2)\to\calG(k)),\]
which a priori is a stack, will carry on some additional structures,
which will be the categorical generalization of the usual Lie
algebras. A stack together with these structures is called a Lie
2-algebra, which was originally introduced in \cite{BC} by Baez and
Crans. We remark that to make the above statement true, we have to
slightly generalize their original definition of the Lie 2-algebra
(see \S \ref{dic2} for the detailed definitions).

Now we can state the main result of this paper.

\begin{thm}\label{main} For any $k$-linear abelian category $\calC$ satisfying (AB5), the sheaf of 2-groups $\GL(\calC)$ is good, and therefore $\gl(\calC):=\Lie(\GL(\calC))$ has a natural Lie 2-algebra structure.
\end{thm}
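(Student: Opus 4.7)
The plan is to reduce the theorem to verifying the generalized goodness conditions of Definitions \ref{Cond(E)} and \ref{good}; once these are checked, the general machinery developed in Section \ref{2-grps to Lie 2-algs} automatically endows $\Lie(\GL(\calC))$ with a Lie 2-algebra structure. All the work is therefore in verifying that $\GL(\calC)$ is good, and I expect the proof to mirror the classical fact (noted in the introduction) that an ordinary group functor is good as soon as it takes products of rings to products of groups.

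Following this template, I would first establish the 2-categorical analogue: for any pair of $k$-algebras $A_1, A_2$, the pullback-along-projections 2-functor
\[
\GL(\calC)(A_1\times A_2)\longrightarrow \GL(\calC)(A_1)\times\GL(\calC)(A_2)
\]
is an equivalence of 2-groups. The key intermediate step is a decomposition of the base-changed category: $\calC_{A_1\times A_2}\simeq \calC_{A_1}\times\calC_{A_2}$ as $(A_1\times A_2)$-linear (AB5) categories. This should follow by examining the action of the orthogonal central idempotents $e_1,e_2\in A_1\times A_2$ on any object $M$ of $\calC_{A_1\times A_2}$: using (AB5) together with the explicit construction of base change from the preceding sections, one obtains a canonical direct-sum decomposition $M\simeq e_1M\oplus e_2M$, with each summand lying in the full subcategory annihilated by the complementary idempotent, and those two subcategories identify with $\calC_{A_1}$ and $\calC_{A_2}$ respectively.

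With the decomposition of categories in hand, an $(A_1\times A_2)$-linear auto-equivalence of $\calC_{A_1\times A_2}$ is forced to preserve the two factors (any linear functor respects the action of central idempotents), hence splits canonically as a pair of $A_i$-linear auto-equivalences; natural isomorphisms between such functors split likewise. This gives the promised equivalence of underlying groupoids, and compatibility with composition gives the monoidal compatibility, so the first half of the goodness condition is verified. The remaining part of the condition, which concerns how $\GL(\calC)$ interacts with the surjection $k[\varepsilon]/\varepsilon^2\to k$ and with fiber products such as $k[\varepsilon]\times_k k[\varepsilon]$, should be handled by the same decomposition argument applied to that particular fiber product, together with left-exactness of base change for (AB5) categories.

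The step I expect to be the main obstacle is coherence at the 2-categorical level. Because $\GL(\calC)$ is a pseudo-functor whose pullbacks along $A\to B$ compose only up to coherent natural isomorphism, verifying the generalized condition (E) requires producing not merely an equivalence of groupoids, but the accompanying monoidal 2-natural transformations and checking the relevant pentagon/hexagon style coherence diagrams. The content of each diagram reduces, via the direct-sum decomposition above, to a statement about ordinary auto-equivalences of $\calC_{A_i}$, but the bookkeeping of invertible 2-cells is where the genuine extension of Demazure's argument to the categorified setting lies. Once these coherences are in place, goodness is established and Theorem \ref{main} follows from the general construction of $\Lie$ for good sheaves of 2-groups.
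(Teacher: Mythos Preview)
There is a genuine gap in your approach: you have misidentified the ring-theoretic content of Condition (E) and of goodness. Condition (E) asks that
\[
\GL(\calC)\bigl(D_R(M\oplus N)\bigr)\;\longrightarrow\;\GL(\calC)\bigl(D_R(M)\bigr)\times_{\GL(\calC)(R)}\GL(\calC)\bigl(D_R(N)\bigr)
\]
be an equivalence, where $D_R(M)=R\oplus M$ with $M^2=0$. On the ring side this is the \emph{fiber product} $D_R(M)\times_R D_R(N)\cong R\oplus M\oplus N$ over the common quotient $R$, not the Cartesian product $A_1\times A_2$. The ring $R\oplus M\oplus N$ (and likewise $k[\varepsilon]\times_k k[\varepsilon]=k[\varepsilon_1,\varepsilon_2]/(\varepsilon_i^2,\varepsilon_1\varepsilon_2)$, which you mention explicitly) is a local Artin ring with \emph{no} nontrivial idempotents, so your orthogonal-idempotent decomposition $\calC_{A_1\times A_2}\simeq\calC_{A_1}\times\calC_{A_2}$ simply does not apply. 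Even granting that $\GL(\calC)$ preserves honest products of rings, this does not yield the required compatibility with these nilpotent fiber products.

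The paper's argument is of a quite different nature. For Condition (E) one studies the general situation $A'''=A'\times_A A''$ and the comparison functor $F=p^*\times q^*:\calC_{A'''}\to\calC_{A'}\times_{\calC_A}\calC_{A''}$, constructs its right adjoint $G$ explicitly as a fiber product in $\calC_R$, and then proves a criterion: if $FG\cong\on{Id}$ and every object of $\calC_{A'''}$ admits a functorial epimorphism from an object in the essential image of $G$, then the induced map on $\GL$ is an equivalence. The hypothesis is verified (when $A'\to A$, $A''\to A$ are surjective with suitable square-zero kernels) by taking $H=f^*f_*$ for $f:R\to A'''$. For goodness one must in addition handle the Cartesian square expressing $D_R$ as the fiber product of $R$ and $D_R\otimes_R D_R$ over $D_R(\calO^2)$; here one of the structure maps is not surjective, and one shows directly that $F$ and $G$ are mutually inverse equivalences. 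None of this is visible from an idempotent argument, and your remark about ``left-exactness of base change'' is also off: base change $-\otimes_A B$ is only right exact in general.
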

Observe that in contrast to $GL(V)$, $\GL(\calC)$ is in general not algebraic. Therefore that $\GL(\calC)$ is good is not automatic.

We can somehow describe the addition of $\gl(\calC)$ (a Lie 2-algebra a priori is a Picard groupoid, see \S \ref{dic2} for details).  We will show that every object $F$ in $\gl(\calC)$ will assign every object $X$ in $\calC$ a self-extension $F(X)$ in $\calC$, and the self-extension of $X$ given by the sum $F+G$ in $\gl(\calC)$ will be canonically isomorphic to the Baer sum of $F(X)$ and $G(X)$. In other words, there is a homomorphism of Picard groupoids
\[\gl(\calC)\to\Ext(\on{Id}_\calC,\on{Id}_\calC),\]
where $\Ext(\on{Id}_{\calC},\on{Id}_{\calC})$ is the Picard groupoid of self-extensions of the identity functor of $\calC$. Therefore, it appears at the first sight strange since an object in $\gl(\calC)$ will not act on $\calC$ itself. But arguably even in classical case, $\gl(V)$ does not act on $V$. Indeed, $\gl(V)$ is a map from $V$ to $T_0V\cong V$.

In the above discussion, we assumed that $\calC$ is $k$-linear.
However, in the main body of the paper, we will work on those
$\calC$ over a separated, quasi-compact scheme $S$.

\medskip

In \cite{FZ1}, E. Frenkel and the author introduced the notion of "gerbal
representation" of a (discrete) group on an abelian category. Now we can generalize this notion to algebraic groups and Lie algebras. It will be used in \cite{FZ2} in the study of double loop Lie algebras.

Let $\pi_0(\GL(\calC))$ denote the coarse moduli space of $\GL(\calC)$, i.e. the sheafification of the pre-sheaf which assigns every $A$ the set of isomorphism classes of objects in $\GL_A(\calC_A)$. Then $\pi_0(\GL(\calC))$ is a group over $\mathbf{Aff}/k$. Let $H^0(\gl(\calC))$ denote the set of isomorphism classes of objects in $\gl(\calC)$. This is naturally a $k$-Lie algebra.

\begin{dfn}Let $\calC$ be a $k$-linear abelian category satisfying (AB5). Then
\begin{enumerate}
\item An algebraic representation of a $k$-group $G$ on $\calC$ is a $k$-2-group homomorphism $G\to\GL(\calC)$ (up to conjugacy);

\item A gerbal representation of $G$ on $\calC$ is a $k$-group homomorphism $G\to\pi_0(\GL(\calC))$;

\item A representation of a Lie algebra $\frakg$ on $\calC$ is a Lie 2-algebra homomorphism $\frakg\to\gl(\calC)$;

\item A gerbal representation of $\frakg$ on $\calC$ is a Lie algebra homomorphism $\frakg\to H^0(\gl(\calC))$.
\end{enumerate}
\end{dfn}

\begin{rmk}
(i) As shown in \cite{FZ1,FZ2}, gerbal representations realize third cohomology
classes. Explicit examples of such representations for double loop groups
have been constructed in \cite{FZ1}, and for double loop Lie algebras such
representations will be constructed in \cite{FZ2}.

(ii) In \cite[\S 20.1]{FG}, another definition of algebraic
representation of an algebraic $k$-group $G$ on $\calC$ is given. It
is not hard to see that these two definitions coincide.
\end{rmk}

\medskip

\noindent\emph{Plans of the paper.} The plan of this paper is as follows.

In \S \ref{review of 2-grp}, we review the basic theory of
2-groups, which can be regarded as group objects in the
2-category of categories.

Recall that on a scheme $S$, there are three successive abelian categories of sheaves
$\on{Ab}\supset\on{Mod}(\calO_S)\supset\on{Qcoh}(S)$.
While a stack of abelian categories over $S$ is the categorical analogue of a sheaf of abelian groups on $S$, it is desire to also have a similar categorical analogue of a quasi-coherent sheaf on $S$. Such an analogue does exist, and was addressed in \cite{G}. \S \ref{sheaf of abelian cat} is a review of this notion.

Now given a quasi-coherent sheaf of abelian categories $\calC$, we construct a stack of 2-groups $\GL(\calC)$ in \S \ref{2-grp of auto-equiv}, whose $\spec A$-points are just $A$-linear auto-equivalences of $\calC_A$. As a byproduct, we obtain a localization functor from the 2-category of $R$-linear abelian categories to the 2-category of quasi-coherent sheaves of abelian categories over $\spec R$. We will also discuss part of the center of $\GL(\calC)$.

In \S \ref{example}, we work out an explicit case of $\GL(\calC)$.
Namely, when $\calC$ is the category of quasi-coherent sheaves on a
separated, quasi-compact scheme $X$ over $S$. It turns out in this
case, $\GL(\calC)$ is a semi-direct product of the Picard stack of
$X$ and the sheaf of automorphisms of $X$ over $S$. We also propose
a description of $\GL(\calC)$ when $\calC$ is the category of
twisted quasi-coherent sheaves on $X$.

In \S \ref{Lie 2-alg gl(C)}, we begin our linearization procedure to the 2-group $\GL(\calC)$. It \S \ref{2-grps to Lie 2-algs}, we give some general conditions of a stack of 2-groups so that its tangent stack at the original admits a Lie 2-algebra structure. The main results of the paper is given in \S \ref{gl(C)}, where we prove that the 2-group $\GL(\calC)$ satisfies the these general conditions. Therefore, we associate $\calC$ a Lie 2-algebra.

Appendix (\S \ref{dic}) is devoted to giving detailed definitions of strictly commutative Picard stacks and Lie 2-algebras in a ringed topos, as well as their homological interpretations. 

\medskip

\noindent\emph{Conventions and Notations.} We will use the following conventions throughout the paper. If
$\calC$ is a category, $x\in\calC$ will denote an object in
$\calC$, $(f:x\to y)\in\calC$ will denote a morphism in $\calC$.
In an abstract monoidal category, we will write $xy$ in stead of
$x\otimes y$ for the tensor product. An expression like $xyzw$ is
understood as $((xy)z)w$, etc. If the monoidal category is
symmetrical monoidal, sometimes $xy$ is also written as $x+y$ and
the unit object is denoted by $0$.

In the paper, we will use the terminology stack and sheaf of categories interchangably.

\medskip

\noindent\emph{Acknowledgement.} This paper was motivated by my
joint project with Edward Frenkel \cite{FZ1,FZ2} on gerbal
representations of double groups and Lie algebras. I would like to
express my deep gratitude to him for collaboration and numerous
discussions. I would also like to thank Martin Olsson and Chenyang
Xu for useful discussions.

\section{2-groups}\label{review of 2-grp}
\subsection{The 2-category of 2-groups} We recall the definition of 2-groups. A good
introduction for this subject is \cite{BL}.

\begin{dfn}    \label{2-grp}
A 2-{\em group} is a monoidal groupoid $\calG$ such that the set
of isomorphism classes of objects of $\calG$, denoted by
$\pi_0(\calG)$, is a group under the multiplication induced from
the monoidal structure. Let $I_{\calG}$ (or $I$ for brevity)
denote the unit object of $\calG$. We set $\pi_1(\calG)=\End_\calG
I$.
\end{dfn}

In the literature, these objects often appear under different
names. For example, they are called weak 2-groups in \cite{BL},
and are called gr-categories in \cite{Br,S}.

It is clear that any group (in the usual sense) can be regarded as
a 2-group with the trivial $\pi_1$. All 2-groups form a (strict)
2-category, with objects being 2-groups, 1-morphism being the
1-homomorphisms between 2-groups (i.e. monoidal functors), and
2-morphism being the monoidal natural transformations of monoidal
functors.

Let $F:\calH\to\calG$ be a 1-homomorphism of 2-groups. Then $\ker F$
is defined to be the category whose objects are the pairs
$(x,\iota)$, where $x\in\calH$ and $\iota:F(x)\cong I_{\calG}$, and
whose morphisms from $(x,\iota)$ to $(x',\iota')$ consisting of
morphisms  $f:x\to x'$ in $\calH$ such that $\iota'F(f)=\iota$. It
is easy to see that $\ker F$ is a 2-group, and the functor $i_F:\ker
F\to \calH$ sending $(x,\iota)$ to $x$ is a faithful 2-group
1-homomorphism. In other words, $\ker F$ is the fiber product
$\calH\times_{\calG}I_{\calG}$, where $I_\calG$ is regarded as a set
with one element, and therefore as a groupoid.

A sequence of 2-group 1-homomorphisms
\[1\to\calH\stackrel{i}{\to}\calG\stackrel{p}{\to}\calK\to 1\]
is called exact if: (i) $p$ is essentially surjective; and (ii)
there is a 1-isomorphism $j:\calH\cong\ker p$ and a 2-isomorphism
$i_p j\cong i$. The sequence is called split if there is a 2-group
1-homomorphism $s:\calK\to\calG$ such that $ps\cong\on{Id}_\calK$.

We recall that if the monoidal structure of a 2-group is upgraded
to a tensor category structure (i.e., there exists a commutativity
constraint whose square is the identity), then this 2-group is
called a Picard groupoid. Picard groupoids should be regarded as
commutative 2-groups. A 1-homomorphism $f:\calP_1\to\calP_2$
between two Picard groupoids is a 2-group 1-homomorphism that
respects the commutativity constraints. The category of
1-homomorphisms $\Hom_{\on{pic}}(\calP_1,\calP_2)$ between two
Picard groupoids forms a Picard groupoid (cf. \cite{Del} \S
1.4.7). More discussions on the Picard groupoid will be given in
\S\ref{dic}.

The main examples of 2-groups we concern in the paper are the
following.
\begin{ex}\label{GL(C)}
For $\calC$ an abelian category, we denote $\GL(\calC)$ the category
of auto-equivalences of $\calC$. By definition, the objects of
$\GL(\calC)$ are additive functors $x:\calC\to\calC$ which are
equivalences of categories. The morphisms $\Hom_{\GL(\calC)}(x,y)$
are the set of natural transforms from $x$ to $y$ which are
isomorphisms. It is clear from the definition that $\GL(\calC)$ is a
strict\footnote{We recall that a monoidal category is called strict
if the associativity constraints and the unit constraints are
identity maps.} monoidal category. Furthermore, $\GL(\calC)$ is a
2-group, with
$\pi_1(\GL(\calC))=\mathrm{Aut}_\calC\on{Id}_\calC=:\calZ(\calC)^\times$.
Here $\mathrm{Aut}_\calC\on{Id}_\calC$ is the group of automorphisms
of the identity functor of $\calC$.
\end{ex}

\begin{ex}\label{Picard} Let $X$ be a scheme over $S$. Denote $\pic_X$ the category of
invertible sheaves on $X$. It is easy to see that $\pic_X$ is a
2-group (in fact a strictly commutative Picard
groupoid\footnote{We recall that a Picard groupoid $\calP$ is
strictly commutative if the commutativity constraint $c$ satisfies
$c_{x,x}=\on{id}_x$ for any $x\in\calP$.}), with
$\pi_0(\pic_X)=H^1(X,\calO_X^*)$, the Picard group of $X$, and
$\pi_1(\pic_X)=H^0(X,\calO_X^*)$.
\end{ex}

\subsection{Coherent 2-groups}At the first sight of the definition of
2-groups, it seems that we lack an important structure in the
theory of 2-groups, compared with the usual group theory. Namely,
there is no functorial way to take the inverse of an object in the
2-group. However, as explained in \cite{BL}, this is not a serious
problem.

Let $\calG$ is a 2-group. Then for every $x\in\calG$, one can
choose $\sigma(x)\in\calG$ and an isomorphism $e_x:\sigma(x)x\cong
I$. From this, one obtains a canonical isomorphism
\begin{equation}\label{x=sigma2(x)}x\to Ix\to
(\sigma(\sigma(x))\sigma(x))x\to\sigma(\sigma(x))(\sigma(x)x)\to\sigma(\sigma(x))I\to\sigma(\sigma(x))
\end{equation}
and therefore, a canonical isomorphism
$i_x:x\sigma(x)\cong\sigma(\sigma(x))\sigma(x)\cong I$. Then
$(x,\sigma(x),i_x,e_x)$ form an adjunction quadruple. That is, the following maps (induced from the associativity constraints, the unit constraints and $i_x,e_x$)
\[x\to xI\to x(\sigma(x)x)\to (x\sigma(x))x\to Ix\to x\]
\[\sigma(x)\to \sigma(x)I\to \sigma(x)(x\sigma(x))\to (\sigma(x)x)\sigma(x)\to I\sigma(x)\to \sigma(x)\]
are identity maps.
A coherent 2-group by definition is a 2-group
$\calG$ such that each object $x\in\calG$ is equipped with an
adjunction quadruple. Coherent
2-groups naturally form a 2-category, with 1-homomorphisms being monoidal functors\footnote{The monoidal functors will automatically respect to adjunction quadruples.} and 2-homomorphisms being monoidal natural transformations. As explained in \emph{loc. cit.}, the natural forgetful
functor $\calF or$ from the 2-category of coherent 2-groups to the
2-category of 2-groups is an equivalence of 2-categories. In what
follows, 2-groups will always mean coherent 2-groups, i.e. we fix
a quasi-inverse of $\calF or$.

Let $\calG$ be a coherent 2-group, then there is a functorial way to
take the inverse. That is, one can upgrade $\sigma$ to an
auto-equivalence $\sigma:\calG\to\calG$, such that for any $f:x\to
y$, $e_x=e_y\circ(\sigma(f)f)$. (Clearly, such $\sigma(f)$ exists
and is unique.) Observe that $\sigma$ is not a 2-group homomorphism.
Rather, one can show that there exist functorial isomorphisms
$\sigma(xy)\cong \sigma(y)\sigma(x)$ compatible with the
associativity constraints and the unit constraints, so that we call
$\sigma$ an inversion 1-anti-homomorphism. Observe that the
isomorphism (\ref{x=sigma2(x)}) can be interpreted as there is a
canonical 2-isomrphism $e:\on{Id}\cong\sigma^2$.

If $\calG$ is a coherent 2-group, it makes sense to define the
conjugate action
\begin{equation}\label{conj}\on{C}:\calG\times\calG\to\calG,\quad \on{C}(x,y)=xy\sigma(x)\end{equation}
For brevity, sometimes, we simply denote $\on{C}(x,y)$ by $y^x$.
It also makes sense to define the commutator functor:
\begin{equation}\label{comm}\on{comm}:\calG\times\calG\to\calG, \quad \on{comm}(x,y)=xy\sigma(x)\sigma(y)\end{equation}
For brevity, sometimes, we simply denote $\on{comm}(x,y)$ by
$(x,y)$.

We will need a few statements, which will be used in
\S\ref{2-grps to Lie 2-algs}. First, we recall the following well-known Coherence Theorem for 2-groups.

\begin{prop}\label{coherence}(\bf Coherence Theorem\rm) Let $\calG$ be a coherent 2-group, and $x,y$ are two objects in $\calG$. Then all the
isomorphisms from $x$ to $y$ that are compositions of various constraints of
the 2-group (i.e. the associativity constraints, the unit
constraints, and the constraints of $\sigma$) are the same.
\end{prop}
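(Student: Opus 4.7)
The plan is to deduce the coherence theorem by reducing to a universal case: the free coherent 2-group on the atomic objects appearing in $x$ and $y$. In this universal case, I want to show that the underlying groupoid is \emph{thin}, i.e.\ there is at most one morphism between any two parallel objects, from which the statement follows immediately.

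Let $S$ denote the finite set of atomic objects occurring in $x$ and $y$. There is a free coherent 2-group $\calF(S)$ on $S$, equipped with a strict 2-group homomorphism $\Phi: \calF(S) \to \calG$ extending the inclusion $S \hookrightarrow \calG$. Any composition of constraints between $x$ and $y$ in $\calG$ is the $\Phi$-image of the analogous composition in $\calF(S)$ between lifts $\tilde x, \tilde y$, because strict 2-group homomorphisms preserve the associativity, unit, and $\sigma$-constraints on the nose. Hence it suffices to prove that in $\calF(S)$, any two parallel constraint morphisms $\tilde x \to \tilde y$ coincide.

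For the universal case, I would construct an equivalence of coherent 2-groups $\calF(S) \xrightarrow{\simeq} \on{disc}(F(S))$, where $F(S)$ is the free group on $S$ and $\on{disc}$ denotes the discrete 2-group whose objects are elements of $F(S)$ and whose only morphisms are identities. On objects, the functor sends a word in the letters and their $\sigma$-images to its reduced image in $F(S)$; since $\on{disc}(F(S))$ has only identity morphisms, the morphism-level definition is forced. Essential surjectivity is clear, and faithfulness is precisely the thinness statement we want.

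The main obstacle is verifying that this assignment is a well-defined functor, i.e.\ that the defining relations of a coherent 2-group force any two parallel constraint morphisms to agree. By Mac Lane's classical coherence theorem, this holds for the sub-language generated only by the associativity and unit constraints. What remains is to handle the presence of $\sigma$, which amounts to showing that two reduction strategies that cancel adjacent pairs $\sigma(x)x$ and $x\sigma(x)$ (via $e_x$ and $i_x$) always yield the same morphism. The core inputs are the adjunction triangle identities displayed after (\ref{x=sigma2(x)}), together with the consistency of the canonical isomorphism $x \cong \sigma(\sigma(x))$ with $\sigma(e_x)$ and $i_{\sigma(x)}$. I would proceed by induction on total word length, reducing every morphism to a canonical normal form parallel to reduction in $F(S)$ and checking that all possible reduction sequences are identified by the Mac Lane relations plus the $\sigma$-adjunction relations. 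Alternatively, one can bypass this bookkeeping by invoking the result of Baez--Lauda \cite{BL}, where this coherence is established by exhibiting an equivalence of 2-categories between coherent 2-groups and strict 2-groups, in which the claim is trivial.
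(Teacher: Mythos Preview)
The paper gives no proof of this proposition: it is stated as a well-known result (``we recall the following well-known Coherence Theorem for 2-groups'') and then immediately used to derive Corollaries~\ref{for (i)} and~\ref{tri-identity}. The implicit reference is \cite{BL}, which establishes the coherence of 2-groups via strictification. So your proposal already goes well beyond what the paper does, and your closing remark that one may simply invoke \cite{BL} is in fact exactly the paper's stance.

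Your strategy---reduce to the free coherent 2-group $\calF(S)$ and show it is equivalent to the discrete 2-group on the free group $F(S)$---is the standard one and is correct in outline. One expository point deserves tightening: you say ``the main obstacle is verifying that this assignment is a well-defined functor, i.e.\ that the defining relations of a coherent 2-group force any two parallel constraint morphisms to agree.'' But a functor into a discrete groupoid is automatically well-defined on morphisms (everything goes to an identity, and composition and identities are trivially preserved). The genuine obstacle is \emph{faithfulness}, which is exactly the thinness of $\calF(S)$ and hence the coherence statement itself. Your subsequent sketch---Mac Lane's theorem for the associativity and unit constraints, then a normal-form argument for cancellations $\sigma(x)x\cong I$ and $x\sigma(x)\cong I$ using the triangle identities and the compatibility of $e:\on{Id}\cong\sigma^2$---is the right content, but it is an argument for faithfulness, not for well-definedness. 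With that relabelling, the plan is sound.
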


The following two lemmas are the direct consequence of this proposition.
\begin{cor}\label{for (i)} Let $\calG$ be a 2-group. For
$x,y\in\calG$, the $s_{x,y}$ be the canonical isomorphism
\[s_{x,y}:(x,y)\to\sigma^2(x,y)\to\sigma(\sigma^2(y)\sigma^2(x)\sigma(y)\sigma(x))\leftarrow\sigma(yx\sigma(y)\sigma(x))=\sigma(y,x)\]
Then the following diagram commutes.
\[\xymatrix{(x,y)\ar^{s_{x,y}}[r]\ar@{=}[d]&\sigma(y,x)\ar^{\sigma(s_{y,x})}[d]\\
(x,y)\ar^{e}[r]&\sigma^2(x,y) }\]
\end{cor}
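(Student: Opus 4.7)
The plan is to invoke the Coherence Theorem (Proposition \ref{coherence}) directly. Both composites $\sigma(s_{y,x})\circ s_{x,y}$ and $e_{(x,y)}$ are isomorphisms $(x,y)\to\sigma^2(x,y)$, so it suffices to exhibit each as a composition of purely structural constraints of the 2-group (associativity, unit, and the constraints governing $\sigma$), whereupon coherence forces them to agree.

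First I would unpack $s_{x,y}$. By its very definition, it is built from three pieces: the canonical unit--$\sigma$ constraint $e_{(x,y)}\colon(x,y)\to\sigma^2(x,y)$; the 1-anti-homomorphism constraint $\sigma(uvwz)\cong\sigma(z)\sigma(w)\sigma(v)\sigma(u)$ specialized to $\sigma(x,y)=\sigma(xy\sigma(x)\sigma(y))$; and, inverted, an application of $\sigma$ to the $e$-constraint identifying $\sigma^2(y)\sigma^2(x)$ with $yx$ inside. Each link is a primitive constraint of $\calG$, so $s_{x,y}$ is a constraint composition; by the symmetric argument so is $s_{y,x}$.

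Next I would check that $\sigma(s_{y,x})$ is again a constraint composition. Since $\sigma$ is a functor it preserves composition, so it is enough to know that $\sigma$ applied to any single primitive constraint is itself a composition of primitive constraints. This is precisely what the structure of $\sigma$ as an inversion 1-anti-homomorphism guarantees: the functorial isomorphisms $\sigma(xy)\cong\sigma(y)\sigma(x)$ compatible with associativity and unit, together with the 2-isomorphism $e\colon\on{Id}\cong\sigma^2$, allow one to rewrite $\sigma$ of any constraint as a constraint composition at the ``outer'' level. Concatenating with $s_{x,y}$ then exhibits $\sigma(s_{y,x})\circ s_{x,y}\colon(x,y)\to\sigma^2(x,y)$ as a constraint isomorphism.

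Finally, $e_{(x,y)}$ is itself a single primitive constraint, so both arrows in the outer rectangle of the diagram are constraint isomorphisms with the same source and target, and Proposition \ref{coherence} yields their equality. The only real subtlety --- and the main obstacle I foresee --- is the second step: confirming that applying $\sigma$ to each link of $s_{y,x}$ produces a constraint composition rather than something requiring an auxiliary coherence lemma. Since this is exactly what the coherence of $\sigma$ as an anti-homomorphism delivers, the difficulty is essentially bookkeeping rather than substantive, and the corollary follows.
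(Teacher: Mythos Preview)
Your proposal is correct and matches the paper's approach exactly: the paper gives no separate proof, stating only that the corollary is a ``direct consequence'' of the Coherence Theorem (Proposition~\ref{coherence}). You have supplied precisely the details the paper elides --- unpacking $s_{x,y}$ and $\sigma(s_{y,x})$ as constraint compositions and then appealing to coherence --- and your handling of the one genuine subtlety (why $\sigma$ applied to a constraint remains a constraint composition) is sound.
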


\begin{cor}\label{tri-identity}Let $\calG$ be a 2-group. Define
\begin{equation}\label{tricomm}(x,y,z)=((x,y),z^y)((y,z),x^z)((z,x),y^x)
\end{equation}
Then there is a canonical isomorphism $(x,y,z)\cong I$. In
addition, the isomorphism $(x,y,z)\cong I$ and $(y,z,x)\cong I$
will induce a canonical isomorphism from $((x,y),z^y)$ to itself
in a way the same as (\ref{x=sigma2(x)}). Then this isomorphism is
the identity map.
\end{cor}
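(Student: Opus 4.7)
The plan is to deduce both assertions from the Coherence Theorem (Proposition~\ref{coherence}), using as input the classical Hall--Witt identity for groups. First I would expand $(x,y,z)$ by unfolding the definitions (\ref{comm}) and (\ref{conj}) into a word in the letters $x,y,z,\sigma(x),\sigma(y),\sigma(z)$ with some fixed parenthesization, and then check by a direct cancellation in the free group $F(x,y,z)$ --- replacing $\sigma(u)$ by $u^{-1}$ --- that this word reduces to $1$. This is precisely one form of the Hall--Witt identity (the identity $[[x,y],z^y][[y,z],x^z][[z,x],y^x]=1$ in any group). Granting this, the universal object $(x,y,z)$ in the free coherent 2-group on $\{x,y,z\}$ lies in the same component as $I_{\calG}$, so one can construct an isomorphism $(x,y,z)\to I$ as some finite composition of associativity, unit, and $\sigma$-constraints by retracing the cancellation step by step. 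Proposition~\ref{coherence} then guarantees that any two such constraint-compositions agree, and evaluation of this universal isomorphism at the given $x,y,z\in\calG$ yields the desired canonical $(x,y,z)\cong I$.

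For the second assertion, write $A=((x,y),z^y)$, $B=((y,z),x^z)$, $C=((z,x),y^x)$, so that $(x,y,z)=ABC$ and $(y,z,x)=BCA$. The canonical isomorphism $ABC\cong I$, combined with associativity, the unit constraints, and the adjunction arrow $e_{BC}\colon\sigma(BC)(BC)\cong I$, yields a canonical $\alpha\colon A\cong\sigma(BC)$. Analogously the canonical isomorphism $BCA\cong I$ produces $\beta\colon BC\cong\sigma(A)$, whence $\sigma(\beta)\colon\sigma(BC)\cong\sigma^{2}(A)$; composing with the inverse of the canonical arrow $A\cong\sigma^{2}(A)$ of (\ref{x=sigma2(x)}) gives $\gamma\colon\sigma(BC)\cong A$. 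The composite $\gamma\circ\alpha\colon A\to A$ is then built entirely out of constraints of the coherent 2-group. Since $\on{id}_A$ is trivially such a composition, Proposition~\ref{coherence} applied to the pair $A,A$ forces $\gamma\circ\alpha=\on{id}_A$, which is the second claim.

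The main obstacle, which is really just bookkeeping, is to make sure that every arrow appearing in the construction --- in particular the two isomorphisms $(x,y,z)\cong I$ and $(y,z,x)\cong I$, the adjunctions $e_{BC}$ and $e_A$, and the arrow in (\ref{x=sigma2(x)}) --- is genuinely a constraint-composition in the precise sense required by Proposition~\ref{coherence}, so that coherence applies to the full composite $\gamma\circ\alpha$. Once this set-up is laid out carefully, no further identities need to be verified by hand; the only genuine input is the Hall--Witt cancellation in the free group.
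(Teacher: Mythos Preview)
Your proposal is correct and rests on the same single ingredient as the paper's proof, namely the Coherence Theorem (Proposition~\ref{coherence}); for the first assertion your argument is essentially identical to the paper's.

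For the second assertion the paper takes a slightly more economical route. Writing $X=((x,y),z^y)$ and $Y=((y,z),x^z)((z,x),y^x)$, the paper observes that the induced self-map of $X$ ``in the way of (\ref{x=sigma2(x)})'' is literally the composite
\[
X \to IX \to (XY)X \to X(YX) \to XI \to X,
\]
so that the claim amounts to the equality of the two constraint-maps $(XY)X\to IX\to X$ and $(XY)X\to X(YX)\to XI\to X$, which is immediate from coherence. Your version instead routes the self-map through $\sigma(BC)$ and $\sigma^2(A)$ before invoking coherence. This is perfectly valid --- once you know (from the first part) that the isomorphisms $(x,y,z)\cong I$ and $(y,z,x)\cong I$ are themselves constraint-compositions, \emph{any} automorphism of $A$ assembled from them and the structural constraints is forced to be the identity --- but it introduces the functor $\sigma$ and the arrow (\ref{x=sigma2(x)}) into a situation where the paper manages without them. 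The payoff of the paper's formulation is that the ``bookkeeping obstacle'' you flag essentially disappears: one is just comparing two visibly-constraint paths $(XY)X\rightrightarrows X$, and there is nothing further to verify.
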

\begin{proof}The first part of the corollary is a direct consequence of the Coherence Theorem. To prove the second part, let $X=((x,y),y^z)$ and $Y=((y,z),x^z)((z,x),y^x)$. Observe that the statement then is equivalent to saying that the two isomorphisms
\[(XY)X\cong IX\cong X\]
\[(XY)X\cong X(YX)\cong XI\cong X\]
are the same. This follows again from the Coherence Theorem.
\end{proof}

Finally, we also need

\begin{lem}\label{for (ii)}Let $\calP$ be a strictly commutative Picard groupoid.
Let $x,y\in\calP$ and $i:xy\cong I$ and $j:yx\cong I$. We obtain a
canonical isomorphism $x\to x$ in a way the same as
(\ref{x=sigma2(x)}). Then this isomorphism is the identity map if
and only if the following diagram commutes
\[\xymatrix{xy\ar^{c_{x,y}}[rr]\ar_{i}[dr]&&yx\ar^{j}[dl]\\
&I& }\] where $c_{x,y}$ is the commutativity constraint.
\end{lem}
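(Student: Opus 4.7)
My plan is to write down the canonical isomorphism $\alpha: x \to x$ produced by (\ref{x=sigma2(x)}) explicitly, then rewrite it so that the commutativity constraint $c_{x,y}$ appears, and read off the condition for $\alpha = \on{id}_x$. Unwinding (\ref{x=sigma2(x)}) in the present case with $\sigma(x) = y$, $e_x = j$, $\sigma(\sigma(x)) = x$, and $e_y = i$, and strictifying the associativity and unit constraints by appeal to the Coherence Theorem (Proposition \ref{coherence}), the map reduces to the composition
\[\alpha = (\on{id}_x \otimes j) \circ (i^{-1} \otimes \on{id}_x) : x \longrightarrow xyx \longrightarrow x.\]

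The heart of the proof is to convert $\alpha$ into the form $\on{id}_x \otimes f$ for a morphism $f: I \to I$; strict commutativity is what makes this possible. The two ingredients I would combine are: (i) the hexagon axiom for $c_{xy,x}$, which together with $c_{x,x} = \on{id}_{xx}$ collapses to $c_{xy,x} = \on{id}_x \otimes c_{y,x}$; and (ii) naturality of the braiding applied to $i^{-1}: I \to xy$, giving $c_{xy,x} \circ (i^{-1} \otimes \on{id}_x) = \on{id}_x \otimes i^{-1}$. Inserting $\on{id} = (\on{id}_x \otimes c_{y,x}^{-1}) \circ (\on{id}_x \otimes c_{y,x})$ into the middle of $\alpha$ and applying (i) and then (ii) should yield
\[\alpha = \on{id}_x \otimes (j \circ c_{y,x}^{-1} \circ i^{-1}).\]

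Since $x$ is invertible in the 2-group $\calP$, tensoring with $x$ is an equivalence of categories and is therefore faithful on 2-morphisms. Hence $\alpha = \on{id}_x$ if and only if $j \circ c_{y,x}^{-1} \circ i^{-1} = \on{id}_I$, and using $c_{y,x}^{-1} = c_{x,y}$ (symmetry of the Picard groupoid) this is exactly $j \circ c_{x,y} = i$, i.e. the commutativity of the triangle in the lemma. The main obstacle I anticipate is the careful bookkeeping of the associator and unit constraints that get absorbed when passing to the strict model; I would handle this cleanly via Proposition \ref{coherence}, which ensures that any composite of such constraints between two fixed objects is the unique canonical arrow, so that no ambiguity arises in identifying the strictified $\alpha$ with the original one.
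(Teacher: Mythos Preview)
Your proof is correct and follows essentially the same line as the paper's: both arguments hinge on the hexagon axiom combined with $c_{x,x}=\on{id}_x$ to collapse a braiding (you use $c_{xy,x}$, the paper uses $c_{x,yx}$), together with naturality of $c$, to reduce the question to whether $j\circ c_{x,y}=i$. The only difference is presentational---you strictify and compute equationally, while the paper keeps the constraints explicit and chases diagrams---so there is no substantive divergence.
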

\begin{proof}Let us use $c$ to denote
the commutativity constraints in this paragraph.

The fact that the isomorphism $x\to x$ is the identity map is
equivalent to the commutativity of the following pentagon.
\[
\xymatrix{(xy)x\ar[rr]\ar_{ix}[d]&&x(yx)\ar^{xj}[d]\\
             Ix\ar[dr]\ar^{c_{I,x}}[rr]&& xI\ar[dl]\\
             &x&\ .}
\]
Since the lower triangle in the diagram is commutative, the
commutativity of the pentagon is equivalent to the commutativity
of the upper square.

The hexagon axiom together with the fact $c_{x,x}=\on{id}_x$ gives
rise to the commutative diagram
\[
\xymatrix{(xy)x\ar[rr]\ar[dr]_{c_{x,y}x}&&x(yx)\ar^{c_{x,yx}}[dl]\\
&(yx)x&\ .}
\]

Finally, from the following diagram, it is clear that the
commutativity of the outer square is equivalent to the
commutativity of the left triangle.
\[
\xymatrix{(xy)x\ar[rr]\ar_{ix}[dd]\ar_{c_{x,y}x}[dr]&&x(yx)\ar^{xj}[dd]\ar^{c_{x,yx}}[dl]\\
                   &(yx)x\ar_{jx}[dl]&\\
             Ix\ar^{c_{I,x}}[rr]&& xI.}
\]
\end{proof}

\subsection{Stacks of 2-groups}

As usual, we could work in any topos $\calT$ instead of the
category of sets. Then a sheaf of 2-groups (or a stack of 2-groups)
$\calG$ will be a stack over $\calT$, together with morphisms
\[I:\calT\to\calG, \quad\quad \otimes:\calG\times_\calT\calG\to\calG\]
satisfying the
associativity and the unit constraints, such that for each
$U\in\calT$, the induced structure on $\calG(U)$ is a
2-group. As explained in the previous subsection, we will also assume a stack of 2-groups contains a datum $\sigma:\calG\to\calG$, together with an isomorphism
\[e:\sigma\otimes\on{Id}\cong I\pi:\calG\to\calG,\]
where $\pi:\calG\to\calT$ is the structural map. In practice, it is convenient to think $\calG$ is a sheaf of groupoids over $\calT$ such that for any $U\in\calT$, $\calG(U)$ is a (coherent) 2-group and the pullback
functor respects to the monoidal structure (i.e. for $f:V\to U$,
$g:W\to V$, $f^*,g^*$ are monoidal functors, and the canonical
isomorphism $g^*\circ f^*\cong (f\circ g)^*$ are monoidal natural
transforms). Denote by $I_U$ the unit object in $\calG(U)$. Observe
that $U\mapsto\mathrm{End}_{\calG(U)}(I_U)$ is a sheaf of abelian
groups over(= an abelian group in) $\calT$, which is denoted by
$\pi_1(\calG)$. However, $U\mapsto\pi_0(\calG(U))$ is usually only
a presheaf. We will denote its \emph{sheafification} by
$\pi_0(\calG)$ (so in general
$\pi_0(\calG)(U)\neq\pi_0(\calG(U))$). This is a sheaf of groups
over $\calT$, called the coarse moduli of $\calG$. If one regards
$\pi_0(\calG)$ as a 2-group, then the natural projection $\pi:
\calG\to\pi_0(\calG)$ is a 2-group homomorphism.

Construction in Example \ref{Picard} has an obvious generalization
to give a sheaf of 2-groups over
$(\mathbf{Aff}/S)_{fppf}$\footnote{By abuse of language, we use
$(\mathbf{Aff}/S)_{fppf}$ to either denote the site, or the
corresponding topos.}, usually denoted by $\calP ic_X$, and called
the Picard stack of $X$. It assigns every $U\to S$ the Picard
groupoid $\pic_{X\times_SU}$.

The first objective of this paper is to give a sheaf theoretical
version of the construction in Example \ref{GL(C)} when $\calC$ is
an abelian category.

\section{Quasi-coherent sheaves of abelian categories over schemes
(stacks)}\label{sheaf of abelian cat}

\subsection{Motivation} Let $S$ be a scheme with a given topology (Zariski, \'{e}tale, or flat). Recall that on $S$, there are three successive abelian categories of sheaves,
\[\on{Ab}\supset\on{Mod}(\calO_S)\supset\on{Qcoh}(S).\]
That is, the category of sheaves of abelian categories, the category of $\calO_S$-modules, and the category of quasi-coherent sheaves on $S$ (which does not depend on the choice of the topology listed above).
While a stack of abelian categories over $S$ (w.r.t. the given topology) is the categorical analogue of a sheaf of abelian groups on $S$, it is desire to also have a similar categorical analogue of a quasi-coherent sheaf on $S$. To do this, let us recall that the category of quasi-coherent sheaves on $S$ is equivalent to the category of the following data (we assume $S$ to be separated and quasi-compact for simplicity): for every $f:\spec A\to S$ an $A$-module $M_A$ and for any $\spec B\to \spec A$ over $S$ an isomorphism $M_A\otimes_{A}B\cong M_B$ satisfying the usual compactibility conditions. This characterization of quasi-coherent sheaves on $S$ generalizes well in the categorical setting, which we will explain in what follows.

\subsection{Base change of abelian categories}
We will use the following notation. If $\lambda:F\to G$ is a
morphism between two functors $F$ and $G$ acting from a category
$\calC$ to a category $\calC'$, we denote $\lambda_X:F(X)\to G(X)$
the specialization of $\lambda$ to $X$.

Now let $\calC$ be an abelian category. The center $\calZ(\calC)$
of $\calC$ is by definition the ring $\End\on{Id}_{\calC}$, where
$\on{Id}_{\calC}$ is the identity functor of $\calC$. Thus, an
element $a\in \calZ(\calC)$ assigns to every $X\in\calC$ a
morphism $a_X\in\End_\calC X$ such that for any $f:X\to Y$,
$a_Y\circ f=f\circ a_X$. It is easy to see that $\calZ(\calC)$ is
in fact a commutative ring. For instance, if $\calC=A\Mod$, the
category of left-modules over a ring $A$, then
$\calZ(\calC)=Z(A)$, the center of $A$. For any $X,Y\in\calC$,
$\mbox{Hom}_{\calC}(X,Y)$ is a $\calZ(\calC)$-module.

Let $R$ be a commutative ring. We will say that an abelian
category is $R$-linear, or over $S=\spec R$, if there is a given
map $R\to\calZ(\calC)$. If $\calC$ is $R$-linear, then the
$\Hom_\calC(X,Y)$ will be an $R$-module for any $X,Y\in\calC$.
Observe that every abelian category is over the spectrum of its
center.

\medskip

From now on, we should assume that $\calC$ is an abelian category,
which satisfies (AB5), i.e. it admits small coproducts (and therefore, it admits arbitrary small colimits), and the filtered colimits of exact sequences are exact.

We assume $\calC$ is an abelian category over some base $S=\spec
R$. For any $R\to A$, we will denote $\calC_A$ the category whose
objects are pairs $(X,\varphi)$ where $X\in\calC$ and
$\varphi:A\to \End_{\calC}X$ such that the new $R$-action on $X$
given by $R\to A\stackrel{\varphi}{\to}\End_{\calC}X$ coincides
with the original one, and whose morphisms are those in $\calC$
which are compatible with the action of $A$. It is a routine work
to check that $\calC_A$ is indeed an abelian category. It is clear
that $\calZ(\calC_A)\cong A\otimes_R\calZ(\calC)$. Therefore,
$\calC_A$ is an abelian category over $\spec A$, which is called
the base change of $\calC$ to $A$. Now if $A\to B$ is a ring
homomorphism, there are two abelian categories over $\spec B$. One
is constructed by using $S\to A\to B$ and so the base change of
$\calC$ to $B$. The other is the base change of $\calC_A$ to $B$.
There is a canonical equivalence of categories
$(\calC_A)_B\cong\calC_B$.

Given a homomorphism $f:A\to B$ over $R$, there is the forgetful
functor $f_*:\calC_B\to\calC_A$, which sends $(X,\varphi)$ to
$(X,\varphi\circ f)$. This functor has a left adjoint, whose
construction we recall presently.

There is a well-defined functor of tensor product
\[
A\Mod\times\calC_A\to\calC_A, \ \ \ M,X\mapsto M\otimes_A X.
\]
Namely, for an $A$ module $M$, let $\calM$ be the category with
objects $m\in\calM$ and $\Hom_\calM(m,m')=\{a\in A, m=am'\}$. For
any $X\in\calC_A$, we define the functor
$\calF_X:\calM\to\calC_A$, which sends $(a:m\to m')\in\calM$ to
$a:X\to X$. Then define
\begin{equation}
M\otimes_A X:=\lim\limits_{\longrightarrow}\calF_X.
\end{equation}
It is easy to see this definition coincides with the definition
given in \cite{G}.

Now let $M=B$ be an $A$-algebra, then there is a natural action of
$B$ on the index category $\calB$, and therefore on $B\otimes_A
X$, and the morphism $B\otimes_AX\to B\otimes_AY$ is compatible
with the $B$-structure. This way, one defines a functor
$\calC_A\to\calC_B$ by $f^*(X)=X\otimes_AB$. It is easy to check
that $f^*$ is the left adjoint of $f_*$. $f^*$ is right exact, and
it is exact if $B$ is flat over $A$ (see \cite{G} Lemma 4)\footnote{We sketch the proof here to see why we require that $\calC$ satisfies (AB5). So we will show if $M$ is a flat $A$-module, then $M\otimes_A-$ is exact. If $M$ is projective given by an idenponent of $A^I$, then $M\otimes_AX$ is given by the corresponding idenpotent of $X^I$. Since $X\to X^I$ is exact by (AB4), $M\otimes_A-$ is exact if $M$ is projective. Since every flat $A$-module is a filtered colimit of projective $A$-modules, then by (AB5) $M\otimes_A-$ is exact if $M$ is flat.}.

\begin{rmk}
The pair $(\calC_B,f^*)$ is characterized by the following
universal property. For any $\calD$ an abelian category over
$\spec B$ and $g:\calC_A\to\calD$ an $A$-linear additive functor,
there is a $B$-linear additive functor $\tilde{g}:\calC_B\to\calD$
and a natural transform $\varepsilon:g\cong \tilde{g}\circ f^*$.
Furthermore, such pair $(\tilde{g},\varepsilon)$ is unique up to a
unique isomorphism.
\end{rmk}

\begin{rmk}\label{lax functor}As usual, given $A\stackrel{f}{\to}B\stackrel{g}{\to}C$ over
$S$, there is a canonical isomorphism $g^*\circ f^*\cong(g\circ
f)^*:\calC_A\to\calC_C$ such that the natural compatibility axiom
for 3-fold compositions holds.
\end{rmk}

\begin{ex}\label{K-mod}If $K$ is a (not
necessarily commutative) ring, equipped with a map $R\to Z(K)$,
and $\calC$ is the category of $K$-modules, then for any $f:R\to
A$, $\calC_A\cong (K\otimes_RA)\Mod$. Observe that although
$\calC_A$ is independent of the morphism $f$, the functor $f^*$
depends on $f$.
\end{ex}

\begin{ex}\label{dual numbers}
The following example is instructive. Let $R=k$ be a field, and
$A=D=k[\varepsilon]/(\varepsilon^2)$ be the ring of dual numbers.
Then for any $\calC$ an abelian category over $\spec k$, $\calC_D$
can be described as follows: objects are $(X,d_X)$ where $X$ is an
object of $\calC$ and $d_X\in\mathrm{End}_\calC(X)$ such that
$d_X^2=0$; morphisms are those $(\alpha:X\to Y)\in\calC$ such that
$d_Y\circ \alpha=\alpha\circ d_X$. Let $f:D\to k$ be the ring
homomorphism defined by $f(\varepsilon)=0$. Then
$f^*:\calC_D\to\calC$ is given by
$f^*(X\stackrel{d_X}{\to}X)=\on{coker}d_X$.
\end{ex}

We will need the following simple lemma in \S \ref{gl(C)}.
\begin{lem}\label{closed embedding}
Let $f:A\to B$ be a surjective ring homomorphism, and $\calC$ be an
$A$-linear abelian category. Then the natural adjunction map
$f^*f_*\to \on{Id}$ is an isomorphism
\end{lem}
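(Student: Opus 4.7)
The plan is to reduce the statement to a standard adjunction principle: for an adjunction $L \dashv R$, the counit $LR \to \on{Id}$ is an isomorphism if and only if the right adjoint $R$ is fully faithful (one direction follows immediately from the triangle identity). Since $f^{*} \dashv f_{*}$ has already been established, it suffices to show that $f_{*}\colon \calC_{B} \to \calC_{A}$ is fully faithful when $f\colon A \to B$ is surjective.

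To verify full faithfulness I would simply unpack definitions. For $(Y,\varphi_{Y}), (Z,\varphi_{Z}) \in \calC_{B}$, a morphism in $\Hom_{\calC_{B}}(Y,Z)$ is a morphism $\alpha\colon Y \to Z$ in $\calC$ with $\alpha \circ \varphi_{Y}(b) = \varphi_{Z}(b) \circ \alpha$ for every $b \in B$, while a morphism in $\Hom_{\calC_{A}}(f_{*}Y, f_{*}Z)$ is a morphism $\alpha\colon Y \to Z$ in $\calC$ with $\alpha \circ \varphi_{Y}(f(a)) = \varphi_{Z}(f(a)) \circ \alpha$ for every $a \in A$. Since $f$ is surjective, $\{f(a) : a \in A\} = B$, so the two intertwining conditions coincide and the Hom-sets are identified; moreover the composition rule is clearly preserved, giving full faithfulness.

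There is essentially no obstacle here; the argument is purely formal and uses only the definition of $\calC_{A}$, the definition of $f_{*}$, and the adjunction $f^{*} \dashv f_{*}$ recalled in the previous remark. If one preferred a more hands-on verification without invoking the abstract adjunction fact, one could instead work directly from the colimit construction of $B \otimes_{A} f_{*}Y$: for $X = f_{*}Y$, the kernel $\ker f \subset A$ acts on $X$ by $0$ (via $\varphi_{Y} \circ f$), so the index category $\calB$ collapses after identifying morphisms that differ by $\ker f$, and the surjectivity of $f$ makes $1 \in B$ terminal in the quotient; the colimit therefore reduces to $X$, and one checks the resulting map is the counit. Either route works, and I would present the adjunction argument as the main proof.
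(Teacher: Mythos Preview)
Your argument is correct. The paper states this lemma without proof, labelling it a ``simple lemma,'' so there is nothing to compare against; your route via the standard adjunction principle (the counit $LR\to\on{Id}$ is an isomorphism iff $R$ is fully faithful) together with the direct verification that $f_*$ is fully faithful is exactly the kind of elementary check the author presumably had in mind. The key observation---that $\varphi_Y\circ f$ agrees with the central $A$-action on $Y$, so that commuting with $\varphi_Y(f(a))$ is automatic for any morphism in $\calC$, and surjectivity of $f$ then forces commuting with all of $B$---is stated clearly. Your alternative sketch via the colimit description of $B\otimes_A f_*Y$ is also fine but unnecessary given the first argument.
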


\subsection{Sheaves of abelian categories} Now, we can follow \cite{G} to make sense the notion of a quasi-coherent sheaf
of abelian categories $\calC$ over a quasi-compact separated
scheme $S$ (even algebraic stacks with affine diagonal) with
respect to flat topology on $\mathbf{Aff}/S$. We first present the
following lemmas, which are the contents of \cite{G}, Proposition
5 (originally, due to Drinfeld) and Proposition 8.

\begin{lem}\label{faithfully flat}
If $f:A\to B$ is a faithfully flat morphism, then the functor
$f^*:\calC_A\to\calC_B$ is exact and faithful.
\end{lem}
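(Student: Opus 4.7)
The exactness assertion is essentially already handled by the footnote accompanying the definition of $f^*$: flatness of $B$ over $A$ suffices, via Lazard's theorem (flat equals filtered colimit of finite free $A$-modules) combined with (AB4) for the finite free case and (AB5) for the filtered colimit step. Since ``faithfully flat'' implies ``flat'', exactness of $f^*$ is already granted, and only faithfulness remains.

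For faithfulness, since $f^*$ is additive and exact between abelian categories, faithfulness is equivalent to reflecting the zero object. So I want to show $f^*(X) = 0$ implies $X = 0$ for every $X \in \calC_A$. The plan is to exhibit the unit of adjunction $\eta_X : X \to f_* f^* X$ as a monomorphism whenever $f$ is faithfully flat: granted this, $f^* X = 0$ forces $f_* f^* X = 0$, whence $X \hookrightarrow 0$ and $X = 0$.

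One half of the argument for monicity of $\eta_X$ is purely formal: the triangular identity $\varepsilon_{f^* X} \circ f^*(\eta_X) = \on{id}_{f^* X}$ says $f^*(\eta_X)$ is a split monomorphism in $\calC_B$, with splitting the counit $\varepsilon_{f^* X}$. Concretely, this splitting comes from the $A$-algebra structure on $B$: the multiplication $m : B \otimes_A B \to B$ is a retraction of the inclusion $g_1 : B \to B \otimes_A B$, $b \mapsto b \otimes 1$, and Remark~\ref{lax functor} gives $m^* \circ g_1^* \cong \on{Id}_{\calC_B}$. By exactness of $f^*$ we therefore have $f^*(\ker \eta_X) = \ker(f^* \eta_X) = 0$.

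The remaining step --- deducing $\ker \eta_X = 0$ from $f^*(\ker \eta_X) = 0$ --- is essentially the very statement we are trying to prove, so a naive iteration is circular. To break the circularity I would invoke the full faithfully-flat descent equivalence $\calC_A \simeq \calC_B^{\on{desc}}$ of Gaitsgory \cite[Prop.~8]{G}: the functor $f^*$ factors as this equivalence followed by the tautologically faithful forgetful functor $\calC_B^{\on{desc}} \to \calC_B$, giving faithfulness at once. This descent equivalence is where the main technical obstacle lies; its proof constructs, from a descent datum on an object of $\calC_B$, a preimage in $\calC_A$ via a colimit construction whose existence crucially uses (AB5).
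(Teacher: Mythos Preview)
Your argument for faithfulness is circular. You correctly identify that deducing $\ker\eta_X = 0$ from $f^*(\ker\eta_X) = 0$ is the very statement under proof, but your proposed fix---invoking the descent equivalence $\calC_A \simeq Desc_{B\otimes_A B}(\calC_B)$ of \cite[Prop.~8]{G}---does not escape the circle. The paper states immediately after Lemma~\ref{descent} that its proof \emph{is based on Lemma~\ref{faithfully flat}}; indeed, already full faithfulness of the comparison functor $\calC_A \to Desc_{B\otimes_A B}(\calC_B)$ requires $f^*$ to be faithful. So you have traded one circularity for another.

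The paper itself gives no proof of Lemma~\ref{faithfully flat}, deferring to \cite[Prop.~5]{G}. A non-circular argument completes your approach as follows. Faithfulness of $-\otimes_A B$ on the ordinary module category $A\Mod$ is classical, and from it one deduces that $A \to B$ is a \emph{pure} monomorphism of $A$-modules (run your own split-monomorphism argument for the kernel of $M \to M\otimes_A B$ with $M \in A\Mod$, and finish with module-level faithfulness rather than categorical faithfulness). Hence $0 \to A \to B \to B/A \to 0$ is a pure exact sequence, and by the standard Lazard-type characterization of purity it is a filtered colimit of \emph{split} short exact sequences of $A$-modules. Applying the additive, colimit-preserving functor $(-)\otimes_A X$ termwise keeps each split sequence exact, and (AB5) then gives exactness of the colimit sequence $0 \to X \to f_*f^*X \to (B/A)\otimes_A X \to 0$ in $\calC_A$. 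Thus $\eta_X$ is a monomorphism outright, with no appeal to faithfulness of $f^*$ on $\calC_A$, and $f^*X = 0 \Rightarrow f_*f^*X = 0 \Rightarrow X = 0$ as you intended.
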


For $A\to A'$, set $A''=A'\otimes_AA'$. For any abelian category
$\calC'$ over $A'$, one naturally defines the category of descent
data on $\calC'$ with respect to $A''$, denoted by
$Desc_{A''}(\calC')$. This is an $A$-linear category, equipped
with the forgetful functor $Desc_{A''}(\calC')\to\calC'$. It is in
general not true that $Desc_{A''}(\calC')$ is abelian, but it is
the case if $A'$ is flat over $A$. If $\calC'=\calC_{A'}$ is the
pullback of some $\calC$ over $A$, there is a natural $A$-linear
functor $\calC\to Desc_{A''}(\calC_{A'})$ such that the
composition $\calC\to Desc_{A''}(\calC_{A'})\to\calC_{A'}$ is just
$f^*$. Therefore, if no ambiguity arises, we will denote the
natural functor $\calC\to Desc_{A''}(\calC_{A'})$ also by $f^*$.

\begin{lem}\label{descent}
For any faithfully flat morphism $A\to B$, the natural functor
$\calC_A\to Desc_{B\otimes_AB}(\calC_B)$ is an equivalence of
categories.
\end{lem}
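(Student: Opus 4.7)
The approach is to adapt Grothendieck's classical flat descent argument to this abstract abelian setting, relying on the two key facts already in place: Lemma \ref{faithfully flat} (so $f^*$ is exact and faithful for faithfully flat $f$) and axiom (AB5) (which yields all the small (co)limits we will need). Write $p_1, p_2 : B \to B \otimes_A B$ for the two structure maps. I would prove the equivalence in the usual two steps: fully faithfulness and essential surjectivity of $f^* : \calC_A \to Desc_{B \otimes_A B}(\calC_B)$.

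For fully faithfulness, fix $X, Y \in \calC_A$. Unwinding the definition of the descent category,
\[
\Hom_{Desc}(f^*X, f^*Y) \;=\; \mathrm{eq}\bigl(\Hom_{\calC_B}(f^*X, f^*Y) \rightrightarrows \Hom_{\calC_{B \otimes_A B}}(p_1^*f^*X, p_2^*f^*Y)\bigr).
\]
Using the $(f^*, f_*)$-adjunction, the claim reduces to exactness of the Amitsur-style sequence
\[
0 \to Y \to f_*f^*Y \to (p_1 f)_*(p_1 f)^*Y
\]
in $\calC_A$. Applying the exact and faithful functor $f^*$ transports the sequence to $\calC_B$; there, the base-changed Amitsur complex of $A \to B$ becomes split exact via the multiplication $B \otimes_A B \to B$, so the sequence itself splits. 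Faithful exactness of $f^*$ then pulls exactness back to $\calC_A$.

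For essential surjectivity, given a descent datum $(X', \theta)$ with cocycle isomorphism $\theta : p_1^* X' \cong p_2^* X'$, I would construct the descended object as the equalizer
\[
X \;:=\; \mathrm{eq}\bigl(f_*X' \rightrightarrows f_*(p_1)_* p_1^* X'\bigr)
\]
in $\calC_A$, the two arrows being built from the unit of adjunction for $X'$ and from the composition of the unit with $\theta$; the equalizer exists by (AB5). The main obstacle is the remaining verification: that $f^*X$ is canonically isomorphic to $X'$ in a way identifying the tautological descent datum on $f^*X$ with $\theta$. Two ingredients must cooperate here. First, $f^*$ must commute with the specific equalizer defining $X$, which reduces, as in the first half, to the splitting of the Amitsur complex after base change along $A \to B$. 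Second, the cocycle condition on $\theta$ must be invoked precisely to ensure that the recovered descent datum matches $\theta$ on the nose.
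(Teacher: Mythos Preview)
Your proposal is correct and is precisely what the paper means when it says (in lieu of a proof) that ``the proof of Lemma \ref{descent} is based on Lemma \ref{faithfully flat} and the usual descent argument.'' You have spelled out that standard argument: full faithfulness via the Amitsur complex, checked after faithfully flat base change where it splits, and essential surjectivity by taking the equalizer and again using the splitting after base change together with the cocycle condition. One small remark: the equalizer defining $X$ exists simply because every abelian category has finite limits; (AB5) is not needed at that point---it enters earlier, in defining the tensor/base-change functors and in the proof of Lemma \ref{faithfully flat}.
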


The proof Lemma \ref{descent} is based on Lemma \ref{faithfully
flat} and the usual descent argument.

\begin{dfn}Let $S$ be a quasi-compact and separated scheme. Then a quasi-coherent sheaf of abelian categories $\calC$ over
$S$ is a rule to assign to every $\spec A$ affine over $S$ an
abelian category $\calC_A$ over $A$, and for any $A\to A'$ over
$S$, an equivalence $\calC_{A'}\cong(\calC_A)_{A'}$ satisfying
natural compatibility conditions.\end{dfn}

The following proposition, which is a direct consequence of Lemma \ref{descent}, justifies the sheaf property of $\calC$.

\begin{prop}Let $\calC$ be a quasi-coherent sheaf of abelian categories over $S$. Then the assignment $A\to \calC_A$ and $f:A\to B$, $f^*:\calC_A\to(\calC_A)_B\cong\calC_B$ is naturally a stack of abelian categories on $(\mathbf{Aff}/S)_{fppf}$.
\end{prop}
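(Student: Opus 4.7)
The plan is to verify the three ingredients required for a stack on the site $(\mathbf{Aff}/S)_{fppf}$: that the assignment is a pseudofunctor (prestack), that morphism sheaves satisfy descent, and that objects satisfy descent. The pseudofunctor part is given essentially for free by the coherent isomorphisms in Remark \ref{lax functor} together with the data built into the definition of a quasi-coherent sheaf of abelian categories; indeed, for $A \to A' \to A''$ over $S$, the equivalence $\calC_{A''} \cong (\calC_{A'})_{A''} \cong ((\calC_A)_{A'})_{A''} \cong (\calC_A)_{A''}$ together with the standard associativity of the $(-)_B$ construction gives the required coherent 2-cocycle data. So the real content is descent.

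For the descent properties, I would reduce to the single-morphism case handled by Lemma \ref{descent}. Given an fppf cover $\{\spec A_i \to \spec A\}_{i \in I}$ in $(\mathbf{Aff}/S)_{fppf}$, the quasi-compactness of $\spec A$ allows us to extract a finite subcover $\{\spec A_i \to \spec A\}_{i=1}^n$, and then $B := \prod_{i=1}^n A_i$ together with the induced map $A \to B$ is a faithfully flat morphism of rings. Moreover, the category of descent data for the covering $\{\spec A_i\}$ is canonically equivalent to the category of descent data along the single morphism $A \to B$ (this is a categorical analogue of the standard observation that fppf descent data for a finite cover is the same as descent data for the disjoint union). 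The key point is that $\calC_B \cong \prod_i \calC_{A_i}$, since base change along $A \to \prod_i A_i$ decomposes as a product of the individual base changes, and this compatibility sits coherently with the cocycle conditions.

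Granted this reduction, the two descent statements follow directly from the lemmas already in hand. For morphism descent: given $X, Y \in \calC_A$, the fact that the map $\Hom_{\calC_A}(X,Y) \to \Hom_{\calC_B}(f^*X, f^*Y)$ equalizes the two pullbacks to $\calC_{B \otimes_A B}$ and is moreover the equalizer follows from the fully faithful part of the equivalence in Lemma \ref{descent} applied to the pullbacks $f^*X$, $f^*Y$ equipped with their canonical descent data. For object descent: any compatible family of objects $X_i \in \calC_{A_i}$ with isomorphisms over the double overlaps assembles, via the identification $\calC_B \cong \prod \calC_{A_i}$ and the cocycle condition, into an object of $\on{Desc}_{B \otimes_A B}(\calC_B)$, and Lemma \ref{descent} furnishes the desired object in $\calC_A$, unique up to unique isomorphism. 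Compatibility between these descended objects under further base change is then automatic from the functoriality of the equivalence in Lemma \ref{descent}.

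The main obstacle I anticipate is purely bookkeeping: spelling out the cocycle compatibilities in enough detail to see that the equivalence $\calC_{\prod_i A_i} \cong \prod_i \calC_{A_i}$ intertwines the descent data for the cover $\{\spec A_i\}$ with descent data along the single morphism $A \to \prod_i A_i$, and that the 2-isomorphisms from Remark \ref{lax functor} are compatible with all of this. There is no genuine mathematical difficulty beyond Lemma \ref{descent} itself; the argument is the categorical shadow of the classical proof that fppf descent for quasi-coherent sheaves follows from Grothendieck's faithfully flat descent for modules once one reduces fppf covers by affines to single faithfully flat ring maps.
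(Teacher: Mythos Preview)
Your proposal is correct and follows exactly the approach the paper intends: the paper states the proposition as ``a direct consequence of Lemma \ref{descent}'' without further proof, and what you have written is precisely the standard unpacking of that claim (reduce an fppf cover to a single faithfully flat ring map via a finite subcover and products, then invoke Lemma \ref{descent} for both morphism and object descent). There is nothing to add beyond noting that the equivalence $\calC_{\prod_i A_i}\cong\prod_i\calC_{A_i}$ you use is immediate from the idempotent decomposition afforded by (AB5).
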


Let us give some examples of quasi-coherent sheaves of abelian categories over a separated, quasi-compact base $S$.

\begin{ex}\label{aff} Let $S=\spec R$ be affine. Then an $R$-linear abelian category $\calC$ (an abelian category over $S$) gives a quasi-coherent sheaf of
abelian categories over $S$. Namely, it assigns every $R\to A$ the
base change category $\calC_A$. Conversely, given a quasi-coherent
sheaf of abelian categories $\calC$ over $S$, we obtain a single
$R$-linear abelian category from the identity map of $S$. This
assignment will be enhanced to a pair of adjoint functors between
the 2-category of $R$-linear abelian categories and the 2-category
of quasi-coherent sheaves of abelian categories over $\spec R$
later.
\end{ex}

\begin{ex}\label{Qcoh} Let $S$ be a separated quasi-compact scheme and $X$ be
a scheme quasi-compact and separated over $S$. Let $\calQ coh(X)/S$
denote the rule that assigns every $\spec A\to S$ the category of
quasi-coherent sheaves on $X_A:=X\times_S\spec A$. Let us prove that
this is indeed a quasi-coherent sheaf of abelian categories over
$S$. Indeed, choose $U\to X$ be a faithfully flat morphism with $U$
affine. Then $V:=U\times_XU$ is also affine. By the diagram
$V_A\rightrightarrows U_A\to X_A$, $\on{Qcoh}(X_A)$ is canonically
equivalent to the category $Desc_{V_A}(\on{Qcoh}(U_A))$ of
quasi-coherent sheaves on $U_A$ equipped with the descent data with
respect to $V_A$, i.e. category of $\Gamma(U_A,\calO_{U_A})$-modules
with additional data. Since the base change of
$\Gamma(U_A,\calO_{U_A})\Mod$ from along $A\to B$ is just
$\Gamma(U_B,\calO_{U_B})\Mod$ (see Example \ref{K-mod}), it is not
hard to see that the base change of $Desc_{V_A}(\on{Qcoh}(U_A))$ for
any $A\to B$ is canonically equivalent to
$Desc_{V_B}(\on{Qcoh}(U_B))$. Therefore, $\on{Qcoh}(X_A)_B$ is
canonically equivalent to $\on{Qcoh}(X_B)$.

Clearly, if $S=\spec R$, then the single $R$-linear abelian category
corresponding to the sheaf of abelian categories $\calQ coh(X)/S$
over $S$ is just $\on{Qcoh}(X)$.
\end{ex}

\begin{ex}\label{twist}The above example has a direct generalization to twisted sheaves.
Let $X\to S$ be as above, and let $\calG$ be a $\bbG_m$-gerbe over
$X$, determined by a cohomology class $\alpha\in
H^2_{\on{fppf}}(X,\bbG_m)$. Then it makes sense to talk about the
category of $\calG$-twisted quasi-coherent sheaves on $X$ (cf.
\cite{L}), denoted by $\on{Qcoh}_{\calG}(X)$. The sheaf version then
is to assign every $\spec A\to S$ the category
$\on{Qcoh}_{\calG_A}(X_A)$. By the same reason as above, this is a
quasi-coherent sheaf of abelian categories over $S$, denoted by
$\calQ coh_\calG(X)$.
\end{ex}

We can generalize the notion of the center of a single abelian
category to a quasi-coherent sheaf of abelian categories. Let $\calC$ be a quasi-coherent sheaf
of abelian categories over $S$. Then we define $\calZ(\calC)$ as a
sheaf over $S$ which assigns every $\spec A\to S$ the set of $\spec
A$-morphisms $\spec A\to \spec\calZ(\calC_A)$. By the theory of
descent, $\calZ(\calC)$ is represented by a scheme affine over
$S$. Clearly, if $S$ is affine, the new notion coincides with the
old one.

If $\calC$ is a quasi-coherent sheaf of abelian categories over $S$, it makes
sense to talk about the category of global sections of $\calC$,
which is a single abelian category, denoted by $\Gamma(S,\calC)$.
Namely, for any such a scheme $S$, let $U\to S$ be a covering of
$S$ by affine schemes. Then $U\times _YU$ is also affine. Now an
abelian category over $X$ is defined as the
$Desc_{U\times_TU}(\calC_U)$. Lemma \ref{descent} guarantees the
above definition is valid. For example,
\[\Gamma(S,\calQ coh(X)/S)\cong\on{Qcoh}(X)\]
regardless $S$ is affine or not.

\section{The 2-group of auto-equivalences of an abelian
category}\label{2-grp of auto-equiv}
\subsection{The 2-group $\GL(\calC)$}
Now if $\calC$ is a quasi-coherent sheaf of
abelian categories over $S$, it is straight forward to generalize
Example \ref{GL(C)} to associate a sheaf of 2-groups $\GL(\calC)$.

For any $\spec A\to S$, We denote by $\GL_A(\calC_A)$ the 2-group
whose objects are $A$-linear auto-equivalences of $\calC_A$ and
morphisms are isomorphisms between these auto-equivalences. The
following is the main result of this section.

\begin{thm}\label{sheaf of auto-equivalences}
There is a sheaf of   2-groups $\GL(\calC)$ over
$(\mathbf{Aff}/S)_{fppf}$, which assigns to $\spec A\to S$ the
2-group $\GL_A(\calC_A)$.
\end{thm}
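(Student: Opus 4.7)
My plan is to verify (i) the pullback construction, (ii) monoidal and $2$-cocycle coherence, and (iii) fppf descent.

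For (i), given $x\in\GL_A(\calC_A)$, consider the $A$-linear additive functor $f^*_\calC\circ x:\calC_A\to\calC_B$, where $f^*_\calC$ denotes the base change of abelian categories from \S\ref{sheaf of abelian cat}. By the universal property recorded in the remark following the base-change construction, there exists, uniquely up to unique isomorphism, a $B$-linear additive functor $f^*(x):\calC_B\to\calC_B$ together with an isomorphism $\varepsilon_x:f^*_\calC\circ x\cong f^*(x)\circ f^*_\calC$. Uniqueness converts any natural isomorphism $x\cong y$ in $\GL_A(\calC_A)$ into a natural isomorphism $f^*(x)\cong f^*(y)$, giving a functor of groupoids $f^*:\GL_A(\calC_A)\to\GL_B(\calC_B)$; that $f^*(x)$ is itself an equivalence follows because $x$ has a quasi-inverse $x'$ with $f^*(x)\circ f^*(x')\cong f^*(xx')\cong f^*(\on{Id})\cong\on{Id}$, using (ii) below.

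For (ii), both $f^*(x)\circ f^*(y)$ and $f^*(xy)$ solve the same universal problem with respect to the $A$-linear functor $f^*_\calC\circ(xy):\calC_A\to\calC_B$ (unwinding: $(f^*(x)\circ f^*(y))\circ f^*_\calC\cong f^*(x)\circ f^*_\calC\circ y\cong f^*_\calC\circ xy$), hence are canonically isomorphic; similarly $f^*(\on{Id}_{\calC_A})\cong\on{Id}_{\calC_B}$. All monoidal coherence diagrams (pentagon, triangle) commute by the same uniqueness. For $A\stackrel{f}{\to}B\stackrel{g}{\to}C$, both $g^*\circ f^*$ and $(gf)^*$ serve as universal $C$-linear factorizations of $(gf)^*_\calC\circ x$ after invoking the canonical isomorphism $g^*_\calC\circ f^*_\calC\cong(gf)^*_\calC$ from Remark \ref{lax functor}; hence there is a canonical monoidal $2$-isomorphism $g^*\circ f^*\cong(gf)^*$, and its $3$-fold compatibility reduces to the coherence stated in Remark \ref{lax functor}.

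For (iii), the stack condition requires that for every faithfully flat $f:A\to B$ over $R$, the natural functor
\[\GL_A(\calC_A)\longrightarrow Desc_{B\otimes_AB}\bigl(\GL_B(\calC_B)\bigr)\]
be an equivalence of $2$-groups. Unwinding, this says that an $A$-linear auto-equivalence of $\calC_A$ is the same data as a $B$-linear auto-equivalence $x_B\in\GL_B(\calC_B)$ equipped with an isomorphism $\theta:p_1^*(x_B)\cong p_2^*(x_B)$ in $\GL_{B\otimes_AB}(\calC_{B\otimes_AB})$ satisfying the cocycle condition on $\calC_{B\otimes_AB\otimes_AB}$. Given $(x_B,\theta)$, the equivalence $\calC_A\simeq Desc_{B\otimes_AB}(\calC_B)$ of Lemma \ref{descent} lets us transport $x_B$ objectwise to an endofunctor $x_A$ of $\calC_A$; the cocycle condition on $\theta$ is precisely what is needed to make this assignment functorial. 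Essential surjectivity and full faithfulness of the descent functor follow by applying Lemma \ref{descent} both to objects of $\calC_B$ and to natural transformations between their images; faithful flatness via Lemma \ref{faithfully flat} then ensures that $x_A$ is an equivalence iff $x_B$ is.

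\textbf{Main obstacle.} The delicate part is step (iii): descent must hold not only for objects of $\calC$ but levelwise for \emph{functors} and for natural isomorphisms between them, and the resulting gluing must be compatible with the monoidal structure constructed in (ii). The main tool is Lemma \ref{descent}, but one must carefully match the cocycle condition on the descent datum $\theta$ with the cocycle condition for the objects being glued inside $\calC_A$, and track that $2$-morphisms in $\GL$ descend along fppf coverings so that the comparison functor is fully faithful as well as essentially surjective.
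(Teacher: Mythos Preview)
Your outline is correct and lands on the same descent argument as the paper for step (iii): glue a descent datum $(x_B,\theta)$ by transporting through the equivalence $\calC_A\simeq Desc_{B\otimes_AB}(\calC_B)$ of Lemma~\ref{descent}, exactly as the paper does. The one place where the paper is sharper than your sketch is full faithfulness: rather than invoking Lemma~\ref{descent} for natural transformations (which is not literally what that lemma says), the paper observes that $\End I_B\cong \Hom_S(\spec\calZ(\calC_B),\bbG_m)$ is represented by $(s_A)_*\bbG_m$, hence is a sheaf, and then any $\Hom(F,G)$ is an $\End I$-torsor. This gives the sheaf condition for morphisms cleanly; your phrasing ``applying Lemma~\ref{descent}\ldots to natural transformations'' would need unpacking.

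The real methodological difference is in step (i). You define $f^*(x)$ abstractly via the universal property of $(\calC_B,f^*_\calC)$, so that the key isomorphism $f^*_\calC\circ x\cong f^*(x)\circ f^*_\calC$ comes for free and all monoidal and cocycle coherences follow from uniqueness. The paper instead writes down $f^*(F)$ by an explicit formula on $(\calC_A)_B$, namely $f^*(F)(X,\varphi)=(F(X),F(\varphi))$, and then \emph{proves} the isomorphism $f^*\circ F\cong f^*(F)\circ f^*$ as a separate lemma (Lemma~\ref{f^*F=(f^*F)f^*}) using the colimit description of $f^*$. Your route is cleaner for the present theorem; the paper's buys a concrete formula that is reused later (e.g.\ in the proof of Proposition~\ref{cond(E)} and in the localization-functor discussion immediately following the theorem), and its Lemma~\ref{f^*F=(f^*F)f^*} is singled out as ``the key point'' precisely because the explicit construction makes that isomorphism non-obvious.
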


\begin{proof}
First, we should realize $\GL(\calC)$ as a category fibered in
groupoids over $\mathbf{Aff}/S$. Therefore we should define, for
any $f:A\to B$ over $S$, a pullback functor
$f^*:\GL_A(\calC_A)\to\GL_B(\calC_B)$, satisfying the usual axioms
for composition.

Recall that objects in $(\calC_A)_B$ are pairs $(X,\varphi)$ where
$X\in\calC_A$ and $\varphi:B\to\End_{\calC_A}(X)$ such that
$\varphi\circ f$ coincides with the original $A$-structure on
$\End_{\calC_A}(X)$, and morphisms $u:(X,\varphi)\to (X',\varphi')$
in $\calC_B$ are those morphisms in $\calC_A$ which commute with the
action of $B$. Therefore, it is natural to define, for
$F\in\GL_A(\calC_A)$, the object $f^*(F)$ of $\GL_B((\calC_A)_B)$ by
the formula
\[
f^*(F)((X,\varphi)\stackrel{u}{\to}(X',\varphi'))=(F(X),F(\varphi))\stackrel{F(u)}{\to}(F(X'),F(\varphi')),
\]
where $F(\varphi)$ is
the map $B\to\End(X)\stackrel{F}{\to}\End(F(X))$. It is clear that
$f^*(F)$ is a well-defined object in $\GL_B((\calC_A)_B)$.

Next, we define, for a morphism $a:F\to G$ in $\GL_A(\calC_A)$,
(i.e., a collection of isomorphisms $a_X:F(X)\to G(X)$ for all
$X\in \calC_A$, functorial in $X$), a morphism $f^*a:f^*(F)\to f^*(G)$
in $\GL_B((\calC_A)_B)$. Observe that for any
$(X,\varphi)\in\calC_B$, we always have $a_X\circ
F(\varphi(b))=G(\varphi(b))\circ a_X$ for $b\in B$. Therefore,
\[a_X\in\Hom_{\calC_B}((F(X),F(\varphi)),(G(X),G(\varphi))).\] We
define $f^*a:f^*(F)\to f^*(G)$ by assigning to $(X,\varphi)$ the
morphism $a_X$. It is clear that this assignment is functorial in
$(X,\varphi)$, and therefore defines a morphism in
$\GL_B((\calC_A)_B)$.

Let us summarize. We have defined for any $f:A\to B$, a pullback
functor $f^*:\GL_A(\calC_A)\to\GL_B((\calC_A)_B)$. Recall that the
data of a quasi-coherent sheaf of abelian categories over $S$ contains a
canonical equivalence $\calC_B\cong(\calC_A)_B$. Let us choose
once and for all a quasi-inverse of this equivalence for any
$S$-morphism $f: A\to B$. Then we obtain an equivalence
$\GL_B((\calC_A)_B)\cong\GL_B(\calC_B)$. We still denote the
composition
$\GL_A(\calC_A)\to\GL_B((\calC_A)_B)\cong\GL_B(\calC_B)$ by $f^*$.
It is clear that $f^*$ is a monoidal functor, and since the
equivalence $\calC_B\cong(\calC_A)_B$ satisfies the natural
compatibility conditions, there is a canonical isomorphism
$g^*\circ f^*\cong(g\circ f)^*$ such that the natural
compatibility condition for 3-fold compositions holds. Therefore,
$\GL(\calC)$ is indeed a category fibered in groupoids over
$\mathbf{Aff}/S$.

Next, we need the following lemma.

\begin{lem}\label{f^*F=(f^*F)f^*}
For any $F\in\GL_A(\calC_A)$, there   is a canonical isomorphism
$f^*\circ F\cong f^*(F)\circ f^*:\calC_A\to\calC_B$.
\end{lem}

\begin{proof}
Clearly, we could assume that $\calC_B=(\calC_A)_B$. We recall the
definition of $f_*(f^*(X))$. Let $\calB$ be the category with objects
$b\in B$ and $\Hom_\calB(b,b')=\{a\in A,b=ab'\}$, and
$\calF_X:\calB\to\calC_A$ be the functor which sends $(a:b\to
b')\in\calB$ to $(a:X\to X)\in\calC_A$. Then
$f_*(f^*(X))=\lim\limits_{\longrightarrow}\calF_X$ is the colimit of
the functor $\calF_X$. Observe that if $H:\calC_A\to\calC_A$ is
any $A$-linear additive functor, there is a canonical morphism
$\lim\limits_{\longrightarrow}\calF_{H(X)}\to
H(\lim\limits_{\longrightarrow}\calF_X)$ by the universal property
of colimits. Furthermore, if $H$ is an auto-equivalence, then this
morphism is an isomorphism. Therefore, we obtain a canonical
isomorphism $f_*(f^*(F(X)))\cong F(f_*(f^*(X)))$ for any
$F\in\GL_A(\calC_A)$ and $X\in\calC_A$, which is functorial in
$X$. It is clear that the above isomorphism commutes with the
action of $B$ on both sides, and this gives us the required
canonical isomorphism $f^*(F(X))\cong f^*(F)(f^*(X))$, functorial in
$X$.
\end{proof}

Now we show that $\GL(\calC)$ is in fact a stack with respect to
the flat topology on $\mathbf{Aff}/S$.

For $\spec A\to S$, denote $s_A:\spec\calZ(\calC_A)\to\spec A$,
and let $I_A$ be the unit of $\GL_A(\calC_A)$. Then for any
$f:A\to B$ over $S$, $\End
I_B=\Hom_S(\spec\calZ(\calC_B),\bbG_m)\cong\Hom_S(\spec
B\times_{\spec A}\spec\calZ(\calC_A),\bbG_m)$. Therefore, the
functor $(A\to B)\mapsto \End I_B$ is represented by
$(s_A)_*\bbG_m$ and hence it is a sheaf. If
$F,G\in\GL_A(\calC_A)$, then $\Hom(F,G)$ is an $\End I_A$-torsor,
and therefore it is also a sheaf.

Let $f:A\to B$ be a faithfully flat morphism, and $i_1,i_2:B\to
B\otimes_AB$. Let $F:\calC_B\to\calC_B$ be a $B$-linear
auto-equivalence with an isomorphism $u:i_1^*(F)\cong i_2^*(F)$ such
that the cocycle condition over $B\otimes_AB\otimes_AB$ is
satisfied. We need to show that there exists
$F_0\in\GL_A(\calC_A)$ and an isomorphism $u_0:f^*(F_0)\cong F$ such
that $u$ is the composition of the natural isomorphisms
$i_1^*(F)\cong i_1^*(f^*(F_0))\cong i_2^*(f^*(F_0))\cong i_2^*(F)$.

Without loss of generality, we could assume that
$\calC_B=(\calC_A)_B$,
$\calC_{B\otimes_AB}=(\calC_A)_{B\otimes_AB}$, etc. Recall that
since $B$ is flat over $A$, $Desc_{B\otimes_AB}(\calC_B)$ is an
$A$-linear abelian category. We claim that the datum $(F,u)$ defines
an object in $\GL_A(Desc_{B\otimes_AB}(\calC_B))$. Indeed, let
$(X,v)\in Desc_{B\otimes_AB}(\calC_B)$, where $X\in\calC_B$ and
$v:i_1^*(X)\cong i_2^*(X)$ satisfying the usual cocycle condition
over $B\otimes_AB\otimes_AB$. We claim that $F(X)$ is naturally an
object in $Desc_{B\otimes_AB}(\calC_B)$. Indeed, we define the
isomorphism $F(v):i_1^*(F(X))\cong i_2^*(F(X))$ as the composition
\[
i_1^*(F(X))\cong i_1^*(F)(i_1^*(X))\cong i_2^*(F)(i_2^*(X))\cong i_2^*(F(X)),
\]
where the first and the last isomorphisms are due to
Lemma \ref{f^*F=(f^*F)f^*}, and the middle one is due to
$u:i_1^*(F)\cong i_2^*(F)$ and $v:i_1^*(X)\cong i_2^*(X)$. Since $u$ and
$v$ satisfy the cocycle condition over $B\otimes_AB\otimes_AB$, so
does $F(v)$. It is clear that the construction is functorial in
$(X,v)$ and is $A$-linear. Therefore, $(F,u)$ gives rise to an
object in $\GL_A(Desc_{B\otimes_AB}(\calC_B))$. By Lemma
\ref{descent}, we choose an equivalence
$E:Desc_{B\otimes_AB}(\calC_B)\to\calC_A$ which is quasi-inverse
to $f^*:\calC_A\to Desc_{B\otimes_AB}(\calC_B)$. Then it gives
rise to an isomorphism of 2-groups
$\GL_A(Desc_{B\otimes_AB}(\calC_B))\to\GL_A(\calC_A)$ We define
$F_0:\calC_A\to\calC_A$ to be the image of $(F,u)$ under the
morphism. It is routine work to check that $F_0$ satisfies the
required properties. This finishes the proof of Theorem \ref{sheaf
of auto-equivalences}. \end{proof}

The key point of the above theorem is Lemma \ref{f^*F=(f^*F)f^*}.
This lemma could be reformulated as below by saying that there is a functor
from the 2-category of $R$-linear abelian categories to the
2-category of quasi-coherent sheaves of abelian categories over $S=\spec R$. This is the categorical analogue of the usual localization functor from the category of $R$-modules to the category of quasi-coherent sheaves on $\spec R$.

Let us define the 2-category of quasi-coherent sheaves of abelian categories over
$S$.
\begin{dfn}
For $\calC$, $\calD$ two quasi-coherent sheaves of abelian categories over
$S$, a 1-morphism $F$ from $\calC$ to $\calD$ amounts to assign
for every $\spec A\to S$, and $A$-linear additive functor
$F_A:\calC_A\to\calD_A$, and for every $f:A\to B$ and isomorphism
$\phi_{A,B}:F_B\circ f^*\cong f^*\circ F_A$ satisfying the usual compatibility
conditions. A 2-morphism from $F$ to $G$ is a natural transform
that is compatible with those $\phi_{A,B}$'s.
\end{dfn}

Now let $S=\spec R$ be affine, and $\calC,\calD$ are two
$R$-linear abelian categories. By Example \ref{aff}, $\calC,\calD$
give sheaves of abelian categories over $S$, denoted by
$\tilde{\calC},\tilde{\calD}$. Now if $F:\calC\to\calD$ is an
$R$-linear additive functor, then by Lemma \ref{f^*F=(f^*F)f^*},
for every $f:R\to A$, $f^*(F)$ is an $A$-linear additive functor
from $\tilde{\calC}_A$ to $\tilde{\calD}_A$ and there is a
canonical isomorphism $f^*(F)\circ f^*\cong f^*\circ F$. Therefore,
$F$ gives rise to a 1-morphism $\tilde{F}$ from $\tilde{\calC}$ to
$\tilde{\calD}$ by demanding $\tilde{F}_A=f^*(F)$ for any $f:R\to A$. If we work carefully, we will really obtain a
functor from the 2-category of $R$-linear abelian categories to
the 2-category of sheaves of abelian categories over $S=\spec R$, which is called the localization functor.

One would expect that this localization functor would induce an equivalence
between the 2-category of $R$-linear abelian categories and the
2-category of quasi-coherent sheaves of abelian categories over $S=\spec R$, as the usual localization functor does.
However, it is not the case. The reason is that not every
1-morphism $\tilde{F}:\tilde{C}\to\tilde{D}$ between quasi-coherent sheaves of abelian categories $\tilde{\calC},\tilde{\calD}$ over $\spec R$ satisfies
$\tilde{F}_A\cong f^*(\tilde{F}_R)$ for any $f:R\to A$. The best one
can prove (for example, by using Lemma \ref{res}) is that if
$\tilde{F}_A$ preserves colimits, then $\tilde{F}_A\cong
f^*\tilde{F}_R$. This suggests that in the definitions of the 2-category of $R$-linear abelian categories and the 2-category of quasi-coherent sheaves of abelian categories over $\spec R$, one should only allow 1-morphisms to be those colimit preserving (i.e. right exact) additive functors. If we modify our definitions in this way, we will obtain that the localization functor is a 2-equivalence.

\subsection{Part of the center of $\GL(\calC)$}

Recall that if $V$ is a (finite dimensional) vector space over a field, then the center of $GL(V)$
is $\bbG_m$. We would like to give a partial analogous statement
for $\GL(\calC)$. To this end, let us first review the central
functor from a symmetrical monoidal category to a monoidal
category. Let $\calC$ be a symmetrical monoidal category in this
paragraph (as opposed to elsewhere of the paper where $\calC$
usually denotes an abelian category), and $\calD$ be a monoidal
category. A monoidal functor $\calZ:\calC\to\calD$ is called
central if for any $x\in\calC, y\in\calD$, there is a functoral
isomorphism
\[
\sigma_{x,y}:\calZ(x)y\cong y\calZ(x)
\]
satisfying all the usual compatibility conditions.
\begin{prop}Let
$\calC$ be a sheaf of abelian categories over $S$. Then there is
an embedding $\calZ:\calP ic_S\to\GL(\calC)$. Furthermore, this
embedding is central. \end{prop}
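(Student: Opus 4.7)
The plan is to define $\calZ$ fiberwise by sending an invertible $A$-module $L\in\pic_A$ to the auto-equivalence $\calZ_A(L):=L\otimes_A -$ of $\calC_A$, using the colimit construction of the tensor product recalled in \S\ref{sheaf of abelian cat}. I would proceed in three stages.

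\emph{Stage 1: build the monoidal morphism of stacks.} Since $L$ is invertible with inverse $L^{-1}$, the associativity isomorphism $(M\otimes_A N)\otimes_A X\cong M\otimes_A(N\otimes_A X)$ --- functorial in all three arguments and arising from the universal property of the colimit defining tensor product --- shows that $L^{-1}\otimes_A -$ is quasi-inverse to $L\otimes_A -$, so $\calZ_A(L)\in\GL_A(\calC_A)$. The same associativity upgrades $\calZ_A$ to a $1$-homomorphism of $2$-groups $\pic_A\to\GL_A(\calC_A)$. Compatibility with pullback along $f:A\to B$ reduces to the identity $L\otimes_A(B\otimes_B Y)\cong (L\otimes_A B)\otimes_B Y$ for $Y\in\calC_B$, again immediate from universality of colimits; this promotes $\calZ$ to a morphism of stacks of $2$-groups over $(\mathbf{Aff}/S)_{fppf}$.

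\emph{Stage 2: centrality, the key point.} For any $F\in\GL_A(\calC_A)$ and $L\in\pic_A$, I need a canonical isomorphism $\sigma_{L,F}:\calZ_A(L)\,F\cong F\,\calZ_A(L)$. The crucial observation is that $F$ preserves all small colimits (having a quasi-inverse, it preserves both limits and colimits) and, being $A$-linear, sends $a\cdot\mathrm{id}_X$ to $a\cdot\mathrm{id}_{F(X)}$ for $a\in A$; hence the composite $F\circ\calF_X$ is canonically identified with the functor $\calF_{F(X)}$ used to define $L\otimes_A F(X)$. Therefore
\[ F(L\otimes_A X)=F(\varinjlim\calF_X)\cong\varinjlim F\circ\calF_X\cong\varinjlim\calF_{F(X)}=L\otimes_A F(X), \]
which supplies $\sigma_{L,F}$ functorially in $X$ and naturally in $F$.

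\emph{Stage 3: the embedding property and the main obstacle.} On $\pi_1$ the induced map $\bbG_m\to\pi_1(\GL(\calC))$ is the units part of the structural map $\calO_S\to\calZ(\calC)$, hence injective; on $\pi_0$, if $\calZ_A(L)\cong\calZ_A(L')$ then $\calZ_A(L\otimes_A (L')^{-1})\cong\mathrm{Id}$, which forces the invertible $A$-module $L\otimes_A (L')^{-1}$ to be trivial. Together these give the embedding. The hardest part of the argument will be the coherence bookkeeping: verifying that $\sigma_{L,F}$ satisfies all the hexagon-type axioms defining a central functor (compatibility with the monoidal structures on $\pic_A$ and on $\GL_A(\calC_A)$, naturality in morphisms $F\to F'$ in $\GL_A(\calC_A)$, and compatibility with the pullback structure from Stage 1). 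Each such diagram reduces by the universal property of colimits to a uniqueness statement, so there is essentially nothing to compute --- only the sheer number of diagrams to check is the genuine cost.
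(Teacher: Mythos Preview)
Your proof is correct and, for the centrality part, actually more direct than the paper's. The paper argues as follows: given $F\in\GL_A(\calC_A)$ and $L\in\pic_{\spec A}$, choose a faithfully flat $f:A\to B$ trivializing $L$, use Lemma~\ref{f^*F=(f^*F)f^*} to obtain $f^*(F(L\otimes X))\cong f^*(L\otimes F(X))$ via the trivialization, check this is compatible with the descent data, and descend. You instead observe that the very argument behind Lemma~\ref{f^*F=(f^*F)f^*} (an auto-equivalence commutes with the colimit defining $M\otimes_A-$, and $A$-linearity identifies $F\circ\calF_X$ with $\calF_{F(X)}$) applies to an arbitrary $A$-module $M$, in particular to $M=L$, yielding $F(L\otimes_A X)\cong L\otimes_A F(X)$ without passing to a cover. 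This is cleaner pointwise; the paper's descent route has the compensating advantage that compatibility of $\sigma_{L,F}$ with base change is essentially built in, whereas you defer it to the coherence bookkeeping in Stage~3.

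One small caution in Stage~3: your claim that $\bbG_m\to\pi_1(\GL(\calC))=\calZ(\calC)^\times$ is injective presupposes that the structural map $\calO_S\to\calZ(\calC)$ is injective, which the paper does not assume (it only fixes a map $R\to\calZ(\calC)$). The paper's proof does not address the ``embedding'' assertion explicitly either, so this is not a defect relative to the paper, but if you want a genuine embedding you need a mild nondegeneracy hypothesis on $\calC$ (e.g.\ that $\calC$ is faithfully $\calO_S$-linear, or simply nonzero over each affine).
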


\begin{proof}
Recall that we have an action of $\calQ coh(S)$ on $\calC$, i.e.,
for every $\spec A\to S$, there is a bifunctor $\otimes:
A\Mod\times\calC_A\to\calC_A$. Remark that if $f:A\to B$, then
there is a natural isomorphism between $f^*(-\otimes-)$ and
$f^*(-)\otimes f^*(-)$. We define a monoidal functor $\calZ_A:
\pic_{\spec A}\to\GL_A(\calC_A)$ by $L\mapsto L\otimes-$. We show
that this functor is central.

Let $F\in\GL_A(\calC_A)$, and $L\in\mathrm{Pic}_A$. Let us show
that there is a functorial isomorphism $F(L\otimes X)\cong
L\otimes F(X)$ for any $X\in\calC_A$. Let $f:A\to B$ be faithfully
flat such that $L\otimes_AB\cong B$. We fix such an isomorphism.
Then, by Lemma \ref{f^*F=(f^*F)f^*}, we obtain a functorial
isomorphism
\[\begin{split}
f^*(F(L\otimes X))&\cong f^*(F)(f^*(L)\otimes f^*(X))\cong f^*(F)(f^*(X))\\
                               &\cong f^*(L)\otimes f^*(F(X))\cong f^*(L\otimes F(X)).
\end{split}\]
It is readily to check that this isomorphism is compatible with
the descent data, and therefore gives a functorial isomorphism
$F(L\otimes X)\cong L\otimes F(X)$.

Finally, all the local constructions behave well under pullbacks
and the global statement follows. \end{proof}

Let $\calZ(\calC)$ be the center of $\calC$. Recall that this is a
scheme affine over $S$. It is clear that there is an embedding
$\calP ic_{\calZ(\calC)}\to\GL(\calC)$. However, this embedding is
not central in general.

\section{Example: a description of $\GL(\calQ coh(X))$}\label{example}
Let us give an example of $\GL(\calC)$. Let $X\to S$ be a scheme
separated and quasi-compact over a separated, quasi-compact scheme
$S$. Recall from Example \ref{Qcoh} that $\calC=\calQ coh(X)$ is
the sheaf of abelian categories over $S$ which assigns every
$\spec A\to S$ the category of quasi-coherent sheaves on
$X_A:=X\times_S\spec A$.

It is clear that there is an embedding $\calP ic_X\to\GL(\calQ
coh(X))$ which sends an invertible sheaf $\calL$ on $X_A$ to the
auto-equivalence $-\otimes\calL$. On the other hand, the Aut sheaf
$\underline{\on{Aut}}_S(X)$ also embeds in $\GL(\calQ coh(X))$.
The main theorem of this section is

\begin{thm}\label{Aut}Let $X$ be a quasi-compact and separated over $S$. Then there is a
splitting short exact sequence
\[1\to\calP ic_X\to\GL(\calQ coh(X))\to\underline{\on{Aut}}_S(X)\to 1\]
\end{thm}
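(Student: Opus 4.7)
The plan is to establish the following classification, from which the whole exact sequence will drop out easily: every $A$-linear auto-equivalence $F$ of $\on{Qcoh}(X_A)$ is isomorphic, essentially uniquely, to a functor of the form $L\otimes\sigma_*(-)$ for a pair $(\sigma,L)$ with $\sigma\in\on{Aut}_{\spec A}(X_A)$ and $L\in\pic(X_A)$. Granted this, the three pieces of the sequence come out immediately: the embedding $\iota:\calP ic_X\to\GL(\calQ coh(X))$ is the monoidal functor $L\mapsto(-\otimes L)$; the splitting $s:\underline{\on{Aut}}_S(X)\to\GL(\calQ coh(X))$ is $\sigma\mapsto\sigma_*$, which is a 1-homomorphism of 2-groups because $(\sigma\tau)_*=\sigma_*\tau_*$; and the projection $p$ sends $F\cong L\otimes\sigma_*(-)$ to $\sigma$, so that $p\circ s\cong\on{Id}$. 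The kernel of $p$ consists of those $F$ with $\sigma_F=\on{id}$, namely $F\cong-\otimes L$, which is precisely the image of $\iota$, and essential surjectivity of $p$ is immediate from the existence of $s$.

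To prove the classification I would combine two ingredients. First, Gabriel--Rosenberg reconstruction: the scheme $X_A$ is functorially recovered from the abelian category $\on{Qcoh}(X_A)$ via its lattice of Serre subcategories and the associated local rings, so that any $A$-linear auto-equivalence $F$ induces a canonical scheme automorphism $\sigma_F:X_A\to X_A$, which lies over $\spec A$ thanks to $A$-linearity. Replacing $F$ by $F\circ\sigma_F^*$ reduces to the case where the associated $\sigma$ is trivial---equivalently, where $F$ has been promoted to a $\calO_{X_A}$-linear auto-equivalence of $\on{Qcoh}(X_A)$. Second, Morita theory: any such $\calO_{X_A}$-linear auto-equivalence is Zariski-locally on $X_A$ given by tensoring with a line bundle (on an affine open $\spec B\subset X_A$, by an invertible $(B,B)$-bimodule on which the two $B$-actions coincide, hence a rank-one projective $B$-module), and essential uniqueness of this local description forces these line bundles to glue into a single global $L\in\pic(X_A)$. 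Functoriality in $A$ is automatic since both steps commute with flat base change on $\spec A$, so together with Theorem~\ref{sheaf of auto-equivalences} the decomposition assembles into a 1-morphism of sheaves of 2-groups.

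The hardest step will be the Gabriel--Rosenberg input: one must verify that a plain additive auto-equivalence really induces a scheme (not merely topological) automorphism of $X_A$, and that the $A$-linearity hypothesis forces this automorphism to live over $\spec A$. It is precisely the weakening from $\calO_{X_A}$-linearity to $A$-linearity that produces the extra factor $\underline{\on{Aut}}_S(X)$ sitting on top of $\calP ic_X$, since $\calO_{X_A}$-linear auto-equivalences by themselves would only give line bundles. Once the reconstruction step is in place, the Morita argument, the identification of the kernel, and the verification of the axioms for an exact sequence of 2-groups in the sense of \S\ref{review of 2-grp} are all routine bookkeeping.
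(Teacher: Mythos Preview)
Your approach is genuinely different from the paper's, and the paper's author in fact explicitly flags the concern with your route. The paper proves the theorem via an integral-kernel classification: it first shows (Proposition~\ref{kernel}) that any exact $R$-linear functor $\on{Qcoh}(X)\to\on{Qcoh}(Y)$ is of the form $\Phi_\calK=q_*(p^*(-)\otimes\calK)$ for a unique quasi-coherent $\calK$ on $X\times_SY$, and then shows (Proposition~\ref{Auto}) that when $F$ is an equivalence the kernel $\calK$ is supported on the graph of an $S$-isomorphism $f:X\to Y$ and is invertible there. The structural argument for the exact sequence is then identical to yours; all the content is in this kernel analysis, which is carried out by elementary \v{C}ech-theoretic gluing and a direct study of how $F$ moves closed points and structure sheaves.

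Your proposed substitute for this---Gabriel--Rosenberg reconstruction plus Morita theory---is plausible in spirit but has a real gap the paper itself comments on. In the remark following Proposition~\ref{Auto} the author notes that Gabriel's argument uses noetherian induction and so does not apply to the quasi-compact separated schemes considered here, and moreover says he is ``not clear whether his argument can deduce the above statement even in the noetherian case.'' The issue is twofold. First, reconstruction of $X_A$ from $\on{Qcoh}(X_A)$ tells you only that the scheme is determined by the category; it does not by itself give a \emph{functorial} assignment $F\mapsto\sigma_F$ landing in scheme automorphisms, let alone one compatible with flat base change in $A$. Second, and more seriously, your passage ``$\sigma_F=\on{id}$ $\Rightarrow$ $F$ is $\calO_{X_A}$-linear'' is not automatic: knowing that $F$ fixes the lattice of Serre subcategories (hence the underlying space) does not immediately give an action compatible with the structure sheaf, and this is exactly the step where the paper's kernel calculation does real work. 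To close this gap you would need, at minimum, a version of Rosenberg's spectrum that is robust for arbitrary quasi-compact separated $X_A$ together with an argument that an auto-equivalence fixing the spectrum pointwise localizes to a $B$-linear auto-equivalence on each affine open $\spec B$; neither is supplied, and the paper's approach sidesteps both by working directly with the kernel on $X\times_SY$.
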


The theorem is a direct consequence of the following Proposition
\ref{Auto}. However, if $X=S$, the proof is much simpler.

\begin{prop}\label{Picard stack as GL(C)}
$\GL(\calQ coh(S))\cong \calP ic_S=[S/\bbG_m]$, the Picard stack
of $S$.
\end{prop}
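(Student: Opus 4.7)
The plan is to check the equivalence fppf-locally on $S$: for each $f : \spec A \to S$, I will produce a natural equivalence of 2-groups $\pic_{\spec A} \simeq \GL_A(A\Mod)$ and verify that these assemble into the required isomorphism of stacks. One direction is already provided by the central monoidal embedding $\calZ_A : \pic_{\spec A} \to \GL_A(A\Mod)$, $\calL \mapsto \calL \otimes_A (-)$, constructed in the previous subsection. The quasi-inverse sends an $A$-linear auto-equivalence $F$ to the $A$-module $F(A)$, equipped with the $A$-action induced by $A \to \End_A(A) \xrightarrow{F} \End_A(F(A))$ (this uses the $A$-linearity of $F$).

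The technical heart is an Eilenberg--Watts-style claim: the tautological natural transformation
\[
\mu_M : F(A) \otimes_A M \longrightarrow F(M), \qquad a \otimes m \longmapsto F(\tilde m)(a),
\]
is an isomorphism for every $M \in A\Mod$, where $\tilde m : A \to M$ sends $1$ to $m$. For $M = A$ this is the identity by construction. Since $F$ is an equivalence, it admits its quasi-inverse as both a left and right adjoint, so preserves all small colimits and is exact; therefore $\mu$ is an isomorphism on every free module $A^{(I)}$ by compatibility with coproducts, and then on an arbitrary $M$ by a five-lemma argument applied to a free presentation $A^{(J)} \to A^{(I)} \to M \to 0$. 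Applying $\mu$ to a chosen quasi-inverse $G$ of $F$ yields $A \cong GF(A) \cong G(A) \otimes_A F(A)$, so $F(A)$ is an invertible $A$-module. Moreover $\mu$ shows that every morphism $F \to F'$ of auto-equivalences is uniquely determined by (and recovered from) an $A$-module homomorphism $F(A) \to F'(A)$, and composition of functors corresponds to tensor product of line bundles via the canonical identification $(F \circ F')(A) = F(F'(A)) \cong F(A) \otimes_A F'(A)$. Hence we obtain a monoidal equivalence $\GL_A(A\Mod) \simeq \pic_{\spec A}$, quasi-inverse to $\calZ_A$.

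Naturality in $A$ follows from Lemma \ref{f^*F=(f^*F)f^*}: for $f : A \to B$ and $F \in \GL_A(A\Mod)$, that lemma gives $f^*(F)(B) \cong f^*(F(A)) = F(A) \otimes_A B$ coherently in $F$, so the quasi-inverse functors commute with pullback up to canonical 2-isomorphism. Assembling these affine-local equivalences yields the isomorphism of stacks of 2-groups $\GL(\calQ coh(S)) \cong \calP ic_S$; the further identification $\calP ic_S = [S/\bbG_m]$ (with trivial $\bbG_m$-action) is the standard presentation of line bundles as $\bbG_m$-torsors. The main obstacle is the Eilenberg--Watts step: once $\mu$ is known to be an isomorphism, all the compatibility checks for morphisms, for the monoidal structure, and for functoriality in $A$ are essentially formal.
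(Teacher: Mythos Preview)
Your proposal is correct and follows essentially the same route as the paper: both identify an auto-equivalence $F$ with $F(A)\otimes_A(-)$ via an Eilenberg--Watts argument using a free presentation and right-exactness, then use a quasi-inverse $G$ to get $A\cong F(A)\otimes_A G(A)$ and hence invertibility of $F(A)$. You are somewhat more explicit than the paper in writing down the natural transformation $\mu$, in checking the monoidal compatibility, and in invoking Lemma~\ref{f^*F=(f^*F)f^*} for naturality in $A$; the paper simply asserts these steps, but the underlying argument is the same.
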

\begin{proof}
It is clear that we only need to prove that $\GL_A(\on{Qcoh}(\spec
A)) \cong \pic_{\spec A}$. Let us denote $\on{Qcoh}(\spec A)$ by
$A\Mod$.

First, we claim that the functor $F:A\Mod\to A\Mod$ is isomorphic
to \[-\otimes F(A):A\Mod\to A\Mod.\] Indeed, we present any
$A$-module $M$ by \[A^I\stackrel{\varphi}{\to} A^J\to M\to 0.\]
Since $F$ is an $A$-linear auto-equivalence, we have the following
right exact sequences
\[\begin{CD} F(A)^I@>F(\varphi)>> F(A)^J@>>> F(M)@>>>0\\ @V\cong VV@V\cong
VV@.\\ A^I\otimes F(A)@>\varphi\otimes F(A)>>A^J\otimes F(A)@>>>
M\otimes F(A)@>>>0.
\end{CD}\]
Therefore, $F(M)\cong M\otimes F(A)$, functorially in $M$.

Next, let $G$ be a quasi-inverse of $F$. We have $A\cong
G(F(A))\cong F(A)\otimes G(A)$. Therefore, $F(A)$ is an invertible
$A$-module.

Now, the functor $\psi:\GL_A(A\Mod)\to \pic_{\spec A}$ is given as
follows: for $F\in\GL_A(A\Mod)$, $\psi(F)=F(A)$. Its
quasi-inverse $\varphi:\pic_{\spec A}\to\GL_A(A\Mod)$ is
given by $\varphi(L)=-\otimes L$. \end{proof}

Now we turn to the general case.

\begin{prop}\label{kernel}Let $S=\spec R$ and $X,Y$ are two $S$-schemes, with
$X$ separated and quasi-compact over $S$. Let
$F:\on{Qcoh}(X)\to\on{Qcoh}(Y)$ be an exact $R$-linear functor.
Then there is a unique (up to isomorphism) quasi-coherent sheaf
$\calK$ on $X\times_SY$, such that $F$ is isomorphic to the
functor $\Phi_{\calK}(-)=q_*(p^*(-)\otimes \calK)$, where
$p:X\times_SY\to X, q:X\times_SY\to Y$ are two projections.
\end{prop}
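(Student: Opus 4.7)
The plan is to establish uniqueness first, then construct $\calK$ in the affine case $X = \spec A$, and finally reduce the general case to affine patches via Zariski gluing.

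\textbf{Uniqueness.} If $F \cong \Phi_\calK$, then $\calK$ is determined by $F$. For $X = \spec A$ affine, the projection $q : X \times_S Y \to Y$ is affine, and one computes $\Phi_\calK(\calO_X) = q_*(p^*\calO_X \otimes \calK) = q_*\calK$, which together with the $A$-action on $F(\calO_X)$ coming from $A = \End_{A\Mod}(A) \to \End_{\on{Qcoh}(Y)}(F(A))$ recovers $\calK$ as a quasi-coherent sheaf on $X \times_S Y$. For general $X$, uniqueness follows by restricting to an affine open cover and invoking the affine case on each patch.

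\textbf{Affine case.} Set $\calK := F(\calO_X) \in \on{Qcoh}(Y)$, equipped with its natural $A$-action from $F$, so that $\calK \in \on{Qcoh}(X \times_S Y)$. To produce a functorial isomorphism $F(M) \cong M \otimes_A \calK = \Phi_\calK(M)$, first verify it for $M = A$, extend to $M = A^{\oplus n}$ by additivity, and then to any finitely presented $M$ via a presentation $A^{\oplus m} \to A^{\oplus n} \to M \to 0$, combining exactness of $F$ with right exactness of $-\otimes_A \calK$. For arbitrary $M$, write $M = \varinjlim_\alpha M_\alpha$ as a filtered colimit of finitely presented modules and pass to the colimit in the target, using that $\on{Qcoh}(Y)$ satisfies (AB5).

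\textbf{General case.} Choose a finite affine open cover $\{U_i = \spec A_i\}$ of $X$. By separatedness, the intersections $U_{ij}$ are affine, and each inclusion $\iota_i : U_i \hookrightarrow X$ is an affine morphism, so that $\iota_{i*}$ preserves quasi-coherence and is exact. Hence $F \circ \iota_{i*} : \on{Qcoh}(U_i) \to \on{Qcoh}(Y)$ is an exact $R$-linear functor, and by the affine case it admits a kernel $\calK_i \in \on{Qcoh}(U_i \times_S Y)$. The uniqueness from the first step, applied over each affine overlap $U_{ij}$, produces canonical descent isomorphisms $\calK_i|_{U_{ij} \times_S Y} \cong \calK_j|_{U_{ij} \times_S Y}$ satisfying the cocycle condition on triple intersections, so descent yields a global $\calK \in \on{Qcoh}(X \times_S Y)$. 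One then checks $F \cong \Phi_\calK$ by reconstructing any $M \in \on{Qcoh}(X)$ from its restrictions to the cover via the standard \v{C}ech sequence and tracking the action of $F$ locally.

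\textbf{Main obstacle.} The delicate point is the passage from finitely presented to arbitrary $A$-modules in the affine case: one needs $F$ to commute with filtered colimits, which is not formally implied by exactness alone. I would attack this by using that $\on{Qcoh}(Y)$ satisfies (AB5) to form the colimit in the target, and by showing, via a Mittag--Leffler style argument that exploits the $R$-linearity and exactness of $F$, that the natural comparison map $\varinjlim F(M_\alpha) \to F(\varinjlim M_\alpha)$ is an isomorphism. The gluing step in the general case is, by contrast, essentially formal once uniqueness is in hand.
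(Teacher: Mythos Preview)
Your overall strategy matches the paper's almost exactly: construct $\calK$ from $F(\calO_X)$ with its induced $A$-action when $X$ is affine, then glue over an affine cover of $X$ and use the \v{C}ech resolution of the identity functor to verify $F \cong \Phi_\calK$ globally. The paper breaks your ``affine case'' into two sub-steps (first $X,Y$ both affine, then $X$ affine and $Y$ arbitrary), but this is only a cosmetic difference.

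You are right to flag the colimit issue as the crux, but your proposed Mittag--Leffler fix cannot succeed: exactness and $R$-linearity alone do not force $F$ to commute with infinite direct sums. For a concrete obstruction, take $X=Y=S=\spec k$ for a field $k$ and let $F(V)=V^{**}$ be the double dual on $k$-vector spaces; this functor is exact and $k$-linear with $F(k)=k$, yet $F \not\cong \Phi_k = \on{Id}$ on infinite-dimensional $V$. The paper's own proof glosses over the same point: its Step~I asserts ``it is clear that if $M = A^I$ is free, then $F(M) \cong \Phi_K(M)$'' for an arbitrary index set $I$, which already presupposes $F(A^I)\cong F(A)^I$. In the paper's only application $F$ is an equivalence and hence cocontinuous, so the gap is harmless there; but as stated the proposition needs the additional hypothesis that $F$ preserves coproducts (equivalently, filtered colimits), and you should add that hypothesis rather than attempt to deduce it.
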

\begin{rmk}Several remarks are in order. First, I think the proposition should hold for right exact
functors $F$, probably under certain restrictions\footnote{For
example, if $F$ extends to an exact DG-functor, then one can use
\cite{T}.}, although the proof presented as below does not apply to
this stronger statement. But if the functor $F$ is exact, then as it
will be clear from the proof, $\calK$ is flat over $X$.

Second, in literature, there exist much deeper theorems concerning
about the exact functors between derived categories (or
DG-categories) of quasi-coherent sheaves (cf. \cite{O} Theorem
2.2, and \cite{T} Theorem 8.9), from which this proposition can be
deduced (but under further restrictions on $X$ and $Y$). This
point here is that the statement of the proposition is general and
the proof is elementary.

Third, this proposition has a twisted version. Namely, if $F$ is an
$R$-linear exact functor from $\on{Qcoh}_\calG(X)$ to
$\on{Qcoh}_\calH(Y)$ (cf. Example \ref{twist}), where $\calG$ (resp.
$\calH$) is a $\bbG_m$-gerbe on $X$ (resp. $Y$), then there exists a
unique (up to isomorphism) $\calG^{-1}\boxtimes\calH$-twisted
quasi-coherent sheaf $\calK$ on $X\times_S Y$ such that $F$ is
isomorphic to $q_*(p^*(-)\otimes\calK)$. For the proof, one just
need to replace the Zariski open covers $U$ of $X$ (resp. $V$ of
$Y$) in below by \'{e}tale covers such that the the pullback of
$\calG$ (resp. $\calH$) to $U$ (resp. $V$) is trivial.
\end{rmk}
\begin{proof}
\noindent\emph{Step I}. Assume that both $X=\spec A$ and $Y=\spec
B$ are affine and $F$ is right exact. Let $K=F(A)$, this is a
$B$-module. From
\[A\cong\End_{A-\on{Mod}}(A)\stackrel{F}{\to}\End_{B-\on{Mod}}(F(A)),\]
$K$ obtains an $A$-module structure, and since the functor $F$ is
$R$-linear, $K$ is in fact an $(A\otimes_RB)$-module. It is clear
that if $M=A^I$ is free, then $F(M)\cong\Phi_K(M)$. Then using the
right exactness of $F$ and the same argument as in the proof of
Proposition \ref{Picard stack as GL(C)}, we conclude that
$F\cong\Phi_K$. The uniqueness is clear.

In this way, we obtain a functor from the category of $R$-linear
right exact functors from $A\Mod$ to $B\Mod$ to the category of
$(A\otimes_RB)$-modules. In what follows, given such an $R$-linear
right exact functor $F:A\Mod\to B\Mod$, the corresponding
$(A\otimes_RB)$-module $K$ is sometimes denoted by $\Psi_F$. It is
clear that if $F$ is exact, then $K$ is flat over $A$.

Let us also remark that this construction localizes well at the
sources and targets. Namely, if $i:U\to X$ and $j:V\to Y$ are an
affine open subschemes of $X$ and $Y$ respectively, then we have a
canonical isomorphism $\Psi_F|_{U\times_SV}\cong\Psi_{j^*\circ
F\circ i_*}$. (Observe that $j^*$ is always exact and since $i$ is
affine, $i_*$ is also exact.)

\medskip

\noindent\emph{Step II}. Assume that $X$ is affine, $Y$ is
arbitrary and $F$ is right exact. Let $Y=\cup_iY_i$ where
$Y_i=\spec B_i$ are affine open subschemes of $Y$. Let
$j^*_i:\on{Qcoh}(Y)\to\on{Qcoh}(Y_i)$ be the restrictions. Then
$j^*\circ F$ is still right exact and $R$-linear. We thus obtain
quasi-coherent sheaves $\calK_i$ on $X\times_SY_i$. By the remark
at the end of the previous step, they glue together to get a
quasi-coherent sheaf $\calK$ on $X\times_SY$, independent of the
choice of the affine open over of $Y$. Let us prove that
$F\cong\phi_\calK$. But this follows from $j_i^*\circ
F\cong\Phi_{\calK_i}\cong j_i^*\circ\Phi_\calK$. Again, we will
denote $\Psi_F$ the quasi-coherent sheaf $\calK$ on $X\times_SY$.
By the remark at the end of the previous step, this construction
localizes well at the source. Namely, if $i:U\to X$ is an affine
open subscheme, then there is a canonical isomorphism
$\Psi_F|_{U\times_SY}\cong\Psi_{F\circ i_*}$.

\medskip

\noindent\emph{Step III}. Let us first assume that $X$ is
quasi-compact and separated over $S=\spec R$, $Y$ is arbitrary,
and $F$ is right exact. Let $X=\cup_iX_i$ where $X_i=\spec A_i$
are affine open subschemes of $X$. Let $j_i:X_i\to X$ be the open
immersion. They are affine morphisms. Then $F\circ(j_i)_*$ is
right exact and $R$-linear. By the previous step, for each $i$,
one constructs a quasi-coherent sheaf $\calK_i$ on $X_i\times_S Y$
such that $F\circ (j_i)_*\cong\Phi_{\calK_i}$. By the reasoning at
the end of the previous step, we can glue these $\calK_i$ together
to get a quasi-coherent sheaf $\calK$ on $X\times_SY$, independent
of the choice of the affine cover of $X$. From the construction,
we know that if $i:U\to X$ is an affine open subscheme, then
$F\circ i_*\cong \Phi_{\calK|_{U\times_SY}}$.

Now we prove that $F\cong\Phi_\calK$ under the further assumption
that $F$ is exact. Clearly, we need only to prove for each affine
open immersion $j:V\to Y$, there is a canonical isomorphism
$j^*\circ F\cong j^*\circ\Phi_\calK$ and such isomorphism respect to
the localization. Therefore, without loss of generality, we can
assume that $Y$ is affine.

Since $X$ is quasi-compact, we can find a finite affine open cover
$X=\cup_iX_i$. Denote $j_{i_1\ldots i_k}:X_{i_1\ldots i_k}:=X_{i_1}\cap\cdots\cap X_{i_k}\to X$
be the open immersion. Then we have the \v{C}ech resolution of the
identity
\[
\on{id}\to(\bigoplus_{i}(j_i)_*j_i^*\to\bigoplus_{i_1i_2}(j_{i_1i_2})_*j_{i_1i_2}^*\to\cdots).
\]
Since $F$ is exact, we obtain the left exact sequence
\[
0\to F\to \bigoplus_i F\circ(j_i)_*j_i^*\to\bigoplus_{i_1i_2}F\circ(j_{i_1i_2})_*j_{i_1i_2}^*.
\]
Or
\[
0\to F\to \bigoplus_i \Phi_{\calK|_{X_i\times_SY}}j_i^*\to\bigoplus_{i_1i_2}\Phi_{\calK|_{X_{i_1i_2}\times_SY}}j_{i_1i_2}^*.
\]
On the other hand, observe that since $Y=\spec C$ is affine
$\Phi_\calK(\calF)$ is the sheaf associated to the $C$-module
$\Gamma(X\times_SY,p^*\calF\otimes\calK)$. The usual \v{C}ech
complex tells us we have the left exact sequence
\[
0\to\Gamma(X\times_SY,p^*\calF\otimes\calK)\to\bigoplus_i(p^*\calF\otimes\calK)(X_i\times_SY)\to\bigoplus_{i_1i_2}(p^*\calF\otimes\calK)(X_{i_1i_2}\times_SY).
\]
Observe that the quasi-coherent sheaf on $Y$ corresponding to the
$C$-module $(p^*\calF\otimes\calK)(X_i\times_SY)$ is
$\Phi_{\calK|_{X_i\times_SY}}j_i^*(\calF)$, and the one corresponding to
$(p^*\calF\otimes\calK)(X_{i_1i_2}\times_SY)$ is $\Phi_{\calK|_{X_{i_1i_2}\times_SY}}j_{i_1i_2}^*(\calF)$. Therefore, we
obtain that there is a canonical isomorphism
\[F(\calF)\cong\Phi_{\calK}(\calF)\]
for any quasi-coherent sheaf $\calF$ on $X$. Or
$F\cong\Phi_{\calK}$.
\end{proof}

If in addition, $F$ is an equivalence, then the sheaf $\calK$ has
a very special form.

\begin{prop}\label{Auto}Assumptions are as in Proposition \ref{kernel}. If in
addition, $Y$ is also separated and quasi-compact over $S$, and
the functor $F$ is an $R$-linear equivalence, then the sheaf
$\calK$ as in Proposition \ref{kernel} is of the following form:
there exists a unique $S$-scheme isomorphism $f:X\to Y$, and a
unique up to isomorphism invertible sheaf $\calL$ on $X$ such that
$\calK$ is isomorphic $(p|_{\Gamma_f})^*\calL$, where $\Gamma_f$
is the graph of $f$, regarded as a closed subscheme of
$X\times_SY$.
\end{prop}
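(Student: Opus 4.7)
The plan is to establish the result first in the affine case via Morita theory and then to globalize using that the local data is canonically determined. By Proposition~\ref{kernel} applied to both $F$ and its quasi-inverse $G=F^{-1}$, we have $F\cong\Phi_\calK$ and $G\cong\Phi_{\calK'}$ for quasi-coherent sheaves $\calK$ on $X\times_S Y$ and $\calK'$ on $Y\times_S X$. The identities $GF\cong\on{Id}_{\on{Qcoh}(X)}$ and $FG\cong\on{Id}_{\on{Qcoh}(Y)}$ will force $\calK$ to have ``rank $1$ support along a graph.''

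In the affine case $X=\spec A$, $Y=\spec B$, write $K=\Psi_F$ and $K'=\Psi_G$, so that $F\cong -\otimes_A K$ and $G\cong -\otimes_B K'$. The quasi-inverse relations translate into $K\otimes_B K'\cong A$ and $K'\otimes_A K\cong B$ as bimodules, meaning that $K$ is a Morita equivalence bimodule between $A$ and $B$. Since both rings are commutative, $F$ induces a canonical $R$-algebra isomorphism $\varphi\colon A\xrightarrow{\sim} B$ via its action on the centers $\calZ(\on{Qcoh}(\spec A))=A$ and $\calZ(\on{Qcoh}(\spec B))=B$, and classical commutative Morita theory then forces $K$ to be an invertible $A$-module whose $B$-action is given through $\varphi$. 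Equivalently, $K$ is annihilated by the ideal $(a\otimes 1-1\otimes\varphi(a)\colon a\in A)$ of $A\otimes_R B$, which cuts out the graph $\Gamma_f\subset X\times_S Y$ of the iso $f\colon X\to Y$ corresponding to $\varphi^{-1}\colon B\to A$. Thus $\calK=(i_{\Gamma_f})_*(p|_{\Gamma_f})^*\calL$, where $\calL$ is the invertible sheaf on $X$ given by $K$; moreover $f$ and $\calL$ are visibly uniquely determined by $F$.

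For general qcqs separated $X,Y$, I would analyze the scheme-theoretic support $Z\subset X\times_S Y$ of $\calK$ and show that both projections $p|_Z$ and $q|_Z$ are isomorphisms, which forces $Z=\Gamma_f$ for a unique iso $f\colon X\to Y$ and exhibits $\calK$ as the pushforward of an invertible sheaf on $Z\cong X$. Taking affine covers $X=\bigcup_i U_i$ with $U_i=\spec A_i$ and $Y=\bigcup_j V_j$ with $V_j=\spec B_j$, the restriction $\calK|_{U_i\times_S V_j}$ is the kernel of the localized functor $j_{V_j}^*\circ F\circ (j_{U_i})_*$; this functor is fully faithful (composition of fully faithful functors) but not an equivalence, so the affine Morita argument does not apply verbatim to each piece. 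The main obstacle is precisely this: extracting the graph structure on each local piece. I expect to overcome it by exploiting the global quasi-inverse, namely by showing that the convolution of $\calK$ and $\calK'$ over $Y$ must be canonically isomorphic to the diagonal kernel $\Delta_{X*}\calO_X$ on $X\times_S X$; this rigidity statement controls the support of $\calK|_{U_i\times_S V_j}$ affine-locally, and then the affine argument identifies each local restriction with the graph of a canonical ring isomorphism $A_i\to B_j$. Because these local isomorphisms all come from the same intrinsic data (the action of $F$ on centers), they automatically patch to a global iso $f\colon X\to Y$; the locally invertible restrictions of $\calK$ then patch to a global invertible sheaf $\calL$ on $X$, and uniqueness of $(f,\calL)$ follows from the canonicity of the whole construction.
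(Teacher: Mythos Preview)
Your affine case is correct and essentially the same as the paper's ``Special Case'': the paper also extracts the ring isomorphism from the action of $F$ on $\End(\calO_X)$, which is exactly the center argument you invoke, and then shows $\calK$ is invertible supported on the graph.

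The globalization, however, has a real gap. You choose independent affine covers $\{U_i\}$ of $X$ and $\{V_j\}$ of $Y$ and then try to analyze $j_{V_j}^*\circ F\circ (j_{U_i})_*$. As you note, this is not an equivalence; it is not even fully faithful (the restriction $j_{V_j}^*$ is not fully faithful, contrary to what you wrote), so nothing like Morita theory applies to it. Your proposed fix via the convolution identity $\calK'\ast\calK\cong\Delta_{X*}\calO_X$ is plausible as a statement, but you give no mechanism by which it ``controls the support of $\calK|_{U_i\times_S V_j}$''; knowing a global convolution identity does not, by itself, tell you that the restriction of $\calK$ to an arbitrary affine box is supported on a graph, nor does it produce a ring map $A_i\to B_j$. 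And the appeal to ``the action of $F$ on centers'' only yields the isomorphism $\Gamma(X,\calO_X)\cong\Gamma(Y,\calO_Y)$, which is far from a scheme isomorphism when $X,Y$ are not affine.

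The missing idea, which the paper supplies, is that one should not pick the open sets on $Y$ independently: for each quasi-compact open $U\subset X$ with complement $Z$, one \emph{constructs} the matching open $V\subset Y$ as $Y\setminus\on{supp}(F(\calO_Z))$, and then proves that $F$ descends to an honest equivalence $\on{Qcoh}(U)\simeq\on{Qcoh}(V)$ (by showing $F$ carries the Serre subcategory $\ker(i^*)$ of sheaves supported on $Z$ onto $\ker(j^*)$). This requires knowing that $F$ preserves finitely presented objects (compactness), that $\calK$ is locally finitely generated, and that irreducible objects---the skyscrapers $\kappa(x)$---are matched bijectively; these inputs are what make the support argument go through. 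Once one has a genuine equivalence $\on{Qcoh}(U)\simeq\on{Qcoh}(V)$ with $U$ affine, the special case (applied to the inverse direction) forces $V$ to be affine as well, and then your Morita argument finishes each piece; the local isomorphisms glue because they are determined by the same kernel $\calK$.
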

\begin{rmk}In \cite{Ga}, Gabriel proved that if $X,Y$ are two noetherian schemes, and $\on{Qcoh}(X)\cong\on{Qcoh}(Y)$, then $X\cong Y$. His proof does not apply here since it uses noetherian induction. In addition, I am not clear whether his argument can deduce the above statement even in the noetherian case.
\end{rmk}

\begin{proof}Let us begin with three general observations.

First, $F$ necessarily sends locally finitely presented
quasi-coherent sheaves to locally finitely presentable
quasi-coherent sheaves, since these sheaves are compact objects in
$\on{Qcoh}(X)$ and in $\on{Qcoh}(Y)$ (i.e., quasi-coherent sheaves
such that $\Hom(\calF,-)$ commutes with filtered colimits.)

Second, if $\calK$ is the kernel on $X\times_SY$ representing $F$,
i.e. $F\cong\Phi_\calK$, then $\calK$ is locally finitely generated
on $X\times_SY$. The reason is as follows. One can write
$\calK=\lim\calK_\lambda$ as a filtered colimit, with each
$\calK_\lambda$ being a locally finitely generated subsheaf of
$\calK$. Then
\[F\cong q_*(p^*(-)\otimes\lim\calK_\lambda)\cong
\lim\Phi_{\calK_\lambda}\] in the category
$\Hom_R(\on{Qcoh}(X),\on{Qcoh}(Y))$ of $R$-linear functors from
$\on{Qcoh}(X)$ to $\on{Qcoh}(Y)$. We claim that in
$\Hom_R(\on{Qcoh}(X),\on{Qcoh}(Y))$,
\[\Hom(F,F)\cong\lim\Hom(F,\Phi_{\calK_\lambda}).\] Indeed, the
category $\Hom_R(\on{Qcoh}(X),\on{Qcoh}(Y))$ is canonically
equivalent to the category
$\Hom_R(\on{Qcoh}^{lfp}(X),\on{Qcoh}(Y))$, where
$\on{Qcoh}^{lfp}(X)$ is the category of locally finitely presentable
quasi-coherent sheaves on $X$. But by the first observation,
$\Hom(F,F)\cong\lim\Hom(F,\Phi_{\calK_\lambda})$ in
$\Hom_R(\on{Qcoh}^{lfp}(X),\on{Qcoh}(Y))$. The claim follows. Now
the identity morphism in $\Hom(F,F)$ gives an isomorphism
$F\cong\Phi_{\calK_\lambda}$ for some $\lambda$. By the uniqueness
of $\calK$, we have $\calK\cong\calK_\lambda$.

The final observation is that $F$ will map irreducible objects in
$\on{Qcoh}(X)$ bijectively to irreducible objects in $\on{Qcoh}(Y)$
and the irreducible objects of $\on{Qcoh}(X)$ are of the form
$\kappa(x)$, where $x$ is a closed point on $X$ and $\kappa(x)$ the
residue field of $x$, regarded as a skyscraper sheaf supported at
$x$. Let us also recall that on a quasi-compact scheme, there always
exist closed points.

Let us make the following convention. If $i:Z\to X$ is a closed
subscheme, then we say a quasi-coherent $\calO_X$-module $\calF$
is a quasi-coherent sheaf on $Z$ if there is a quasi-coherent
$\calO_Z$-module $\calF'$ such that $\calF\cong i_*\calF'$.

\medskip

\noindent\emph{Special Case}. We first consider the case that $Y$
is affine over $S$. Assume that $Y=\spec B$. Let
$A=\End(\calO_X)$, and $X_a=\spec A$. From the map
\[
\varphi:B\to\End(F(\calO_X))\cong\End(\calO_X)=A,
\]
we obtain a morphism $X_a\to Y$. Let us denote the composition
$f:X\to X_a\to Y$. We claim that the sheaf $\calK$ is a
quasi-coherent sheaf on $\Gamma_f$. Let us cover $X$ by affine
open subschemes $X=\cup_iX_i$, where $j_i:X_i=\spec A_i\to X$ are
open immersions. We have the natural $R$-algebra homomorphism
$r_i:A\to A_i$. Then the map $f\circ j_i:X_i\to Y$ is induced from
the $R$-algebra homomorphism
$B\stackrel{\varphi}{\to}A\stackrel{r_i}{\to} A_i$. It is easy to
see that this homomorphism is the same as
\[
\phi_i:B\to\End(F(j_i)_*\calO_{X_i})\cong\End((j_i)_*\calO_{X_i})=A_i.
\]
Clearly, $\calK|_{X_i\times_SY}$ as a $(A_i\otimes_RB)$-module is
annihilated by $\{1\otimes b-\phi_i(b)\otimes 1=1\otimes
b-r_i(\varphi(b))\otimes 1, b\in B\}$. That is,
$\calK|_{X_i\times_SY}$ is a quasi-coherent sheaf on $\Gamma_{f\circ
j_i}$. Therefore, $\calK$ is a quasi-coherent sheaf on $\Gamma_f$.
The projection $p:X\times_SY\to X$ induces an isomorphism
$\Gamma_f\cong X$. Therefore, $\calK$ can be regarded as a
quasi-coherent sheaf on $X$ and the functor $F$ is isomorphic to
$f_*(-\otimes\calK)$. By the second observation made at the
beginning of the proof, $\calK$ is locally finitely generated on
$X$. Then since $\calK$ is $X$-flat, it is a locally free sheaf of
finite rank. Since it sends $\kappa(x)$ to $\kappa(f(x))$, it is
invertible. It remains to prove that $f$ is an isomorphism. As the
functor $f_*\cong F\circ(-\otimes\calK^{-1})$ is an equivalence,
$f_*$ is exact (as functors from $\on{Qcoh}(X)$ to $\on{Qcoh}(Y)$).
From this we conclude that $X$ is affine since $Y$ is affine. Now
apply the same discussion to a quasi-inverse $F^{-1}$, we obtain a
morphism $g:Y\to X$ and an isomorphism of functors
$g_*\cong(-\otimes\calK)\circ F^{-1}$. From there, it is easy to see
that $f$ is an isomorphism with the inverse $g$.

\medskip

\noindent\emph{General case}. First we have the following observation. Let $X$ be a scheme and $U$ be an open subscheme of $X$. Then the natural embedding $i:U\to X$ is quasi-compact if and only if the closed complement $Z=X-U$ can be equipped with some closed subscheme struture of $X$ such that $\calO_Z$ is locally of finite presentation as an $\calO_X$-module.

Now we return our case where $X,Y$ are separated and quasi-compact. Let $i:U\to X$ be a quasi-compact open embedding. By the above observation, we can give $Z=X-U$ a closed subscheme structure of $X$ such that $\calO_Z$ is locally of finite presentation as an $\calO_X$. Then $F(\calO_Z)$ is a locally finitely presented quasi-coherent sheaf on
$Y$, and therefore its support $\on{supp}(F(\calO_Z))$ is closed in $Y$. Let $V=Y-\on{supp}(F(\calO_Z))$ be the open complement.
\begin{lem}
The natural open embedding $j:V\to Y$ is quasi-compact.
\end{lem}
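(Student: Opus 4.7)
The plan is to exhibit on $W:=\on{supp}(F(\calO_Z))$ a closed subscheme structure whose structure sheaf $\calO_W$ is locally of finite presentation as an $\calO_Y$-module, and then invoke the observation made at the start of the general case (that such a structure on $Y\setminus V$ exists iff the open embedding $V\to Y$ is quasi-compact) to conclude.

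The first step is to note that $\calA:=F(\calO_Z)$ is itself locally of finite presentation on $Y$. This is immediate from the very first general observation recorded at the beginning of the proof of Proposition \ref{Auto}: since $F$ is an equivalence it preserves compact objects, and the compact objects of $\on{Qcoh}$ on a quasi-compact separated scheme are precisely the locally finitely presented quasi-coherent sheaves. Because $\calO_Z$ is locally finitely presented on $X$ by the very characterization of quasi-compact open embeddings being used, the claim follows.

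The second and key step is to exhibit for $\calA$ a natural quasi-coherent ideal of finite type whose vanishing locus is exactly $\on{supp}(\calA)$; the zeroth Fitting ideal $\on{Fitt}_0(\calA)$ does the job. Locally on an affine open $\spec B\subset Y$ one chooses a finite presentation $B^p\to B^q\to A\to 0$ with associated matrix $M$, and sets $\on{Fitt}_0(A)$ to be the ideal generated by the $q\times q$ minors of $M$. Standard commutative algebra (Fitting's lemma) says that this ideal is finitely generated, independent of the chosen presentation, and satisfies $V(\on{Fitt}_0(A))=\on{supp}(A)$; consequently these local ideals glue to a quasi-coherent ideal $\on{Fitt}_0(\calA)\subset\calO_Y$ of finite type.

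Setting $W:=V(\on{Fitt}_0(\calA))$ then finishes the argument. The ideal $\on{Fitt}_0(\calA)$ being locally generated by finitely many sections yields a local finite presentation $\calO_Y^n\to\calO_Y\to\calO_W\to 0$, so $\calO_W$ is locally of finite presentation as an $\calO_Y$-module, while $\on{supp}(W)=\on{supp}(\calA)=Y\setminus V$ by construction. The cited observation then immediately gives the quasi-compactness of $j:V\to Y$. I do not anticipate a serious obstacle here: everything reduces to the classical fact that the zeroth Fitting ideal of a locally finitely presented module cuts out its support scheme-theoretically, together with the already-established compactness preservation property of the equivalence $F$.
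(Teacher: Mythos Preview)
Your proof is correct and takes a genuinely different route from the paper's.

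The paper works with the annihilator ideal $\calI=\calA nn(F(\calO_Z))$ rather than the Fitting ideal. To show $\calI$ is locally finitely generated, the paper first proves the stronger fact that $F(\calO_Z)$ is locally generated by a single section: since $F(\calO_Z)$ is a quotient of $F(\calO_X)$, it suffices to show $F(\calO_X)$ is locally cyclic, and this is checked at each closed point $y\in Y$ by computing $\Hom(F(\calO_X),\kappa(y))\cong\Hom(\calO_X,\kappa(x))\cong\kappa(y)$, where $x$ is the closed point with $F(\kappa(x))\cong\kappa(y)$. Once $F(\calO_Z)$ is locally cyclic and locally finitely presented, it is locally of the form $\calO_Y/\calI$ with $\calI$ finitely generated.

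Your argument sidesteps this entirely: the zeroth Fitting ideal of any locally finitely presented sheaf is automatically of finite type and cuts out the support set-theoretically, so no information about $F$ beyond preservation of compact objects is needed. This is cleaner and more general. What the paper's argument buys is the additional information that $F(\calO_X)$ is locally cyclic, which is of some independent interest for understanding the equivalence (and is in the spirit of eventually identifying the kernel $\calK$ as a line bundle on a graph), but for the lemma at hand your approach is the more efficient one.
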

\begin{proof}Let $\calI=\calA nn(F(\calO_Z))$ be the annihilator of $F(\calO_Z)$, so that the underlying topological space of $\calO_Y/\calI$ in $Y$ coincides with $\on{supp}(F(\calO_Z))$. Again, by the above observation, it is enough to show that $\calI$ is locally finitely generated. If we could show that locally $F(\calO_Z)$ is generated by one section, then together with the fact that $F(\calO_Z)$ is locally finitely presented, we can conclude that $\calI$ is locally finitely generated by \cite[Theorem 2.6]{M}. Observe that $F(\calO_Z)$ is a quotient of $F(\calO_X)$. Therefore, it is enough to show that $F(\calO_X)$ is locally generated by one section. However, for any closed point $y\in Y$, let $x$ be the unique closed point in $X$ such that $F(\kappa(x))\cong\kappa(y)$. Then
\[\Hom(F(\calO_X),\kappa(y))\cong \Hom(\calO_X,F^{-1}(\kappa(y)))\cong\Hom(\calO_X,\kappa(x))\cong\kappa(x)\cong\kappa(y).\]
Therefore, $F(\calO_X)$ is locally generated by one section and the lemma follows.
\end{proof}
We need another lemma.
\begin{lem}\label{point case}
Let $\xi$ be a point on $Z$ (not necessarily closed), and let $\kappa(\xi)$ be the push-forward of the structure sheaf of $\xi$ to $X$ (so $\on{supp}(\kappa(\xi))$ is the closure of $\xi$ in $Z$). Then $j^*F(\kappa(\xi))=0$.
\end{lem}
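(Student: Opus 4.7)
The plan is to deduce $j^*F(\kappa(\xi))=0$ from the tautological vanishing $j^*F(\calO_Z)=0$, by writing $\kappa(\xi)$ as a quotient of copies of $\calO_Z$ and propagating the vanishing through $F$ and $j^*$. The first step is to observe that $\kappa(\xi)$ actually belongs to the subcategory $\on{Qcoh}(Z)\subset\on{Qcoh}(X)$. Indeed, $\kappa(\xi)$ is supported on $\overline{\{\xi\}}\subset Z$; at the stalk at $\xi$ the ideal $\calI_{Z,\xi}$ is contained in the maximal ideal $\frakm_\xi$ (since $\xi$ is topologically a point of $Z$), hence acts as zero on $\kappa(\xi)_\xi=\kappa(\xi)$; at all other points the sheaf $\kappa(\xi)$ vanishes. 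Thus $\calI_Z\cdot\kappa(\xi)=0$ and $\kappa(\xi)$ is canonically an $\calO_Z$-module.

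Next, every quasi-coherent $\calO_Z$-module is a quotient of a (possibly infinite) direct sum of copies of $\calO_Z$, so one can choose a surjection $\calO_Z^{\oplus I}\twoheadrightarrow\kappa(\xi)$ for some index set $I$. Since $F$ is an equivalence of abelian categories it admits both a left and a right adjoint (namely any quasi-inverse), hence preserves all colimits that exist, in particular arbitrary direct sums; and it is right exact. Applying $F$ yields a surjection
\[
F(\calO_Z)^{\oplus I}\;\cong\; F(\calO_Z^{\oplus I})\;\twoheadrightarrow\; F(\kappa(\xi)).
\]
The restriction functor $j^*$ is exact and preserves direct sums, so restricting to $V$ gives a surjection
\[
\bigl(j^*F(\calO_Z)\bigr)^{\oplus I}\;\twoheadrightarrow\; j^*F(\kappa(\xi)).
\]
By the very definition $V=Y-\on{supp}(F(\calO_Z))$, we have $j^*F(\calO_Z)=0$, and the lemma follows.

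The argument is essentially formal once the correct setup is in place. The only non-routine point is the verification that $F$ commutes with the infinite direct sum $\bigoplus_I\calO_Z$; but this follows from $F$ being an equivalence and hence simultaneously a left and right adjoint. Conceptually, what is being shown is that $F$ maps sheaves set-theoretically supported on $Z$ to sheaves set-theoretically supported on $\on{supp}(F(\calO_Z))$, and this is the compatibility that will eventually allow $F$ to descend to an equivalence between $\on{Qcoh}(U)$ and $\on{Qcoh}(V)$.
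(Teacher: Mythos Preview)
Your overall strategy is sound and close to the paper's, but two of your justifications are wrong as stated, even though the conclusions you draw from them happen to be true.

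First, the sheaf $\kappa(\xi)$ does \emph{not} vanish at all points other than $\xi$: its stalk at any specialization $\eta\in\overline{\{\xi\}}$ is again $\kappa(\xi)$. The clean way to see that $\kappa(\xi)$ is an $\calO_Z$-module is simply that the morphism $\spec\kappa(\xi)\to X$ factors scheme-theoretically through $Z$ (since $\calI_{Z}\subset\frakp_\xi$), so the pushforward is automatically the pushforward of an $\calO_Z$-module.

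Second, and more seriously, it is \emph{not} true that every quasi-coherent $\calO_Z$-module is a quotient of a free one: take $Z\cong\bbP^1$ (which can certainly occur as the complement of a quasi-compact open in a separated quasi-compact $X$) and $\calO_Z(-1)$. Your desired surjection $\calO_Z^{\oplus I}\twoheadrightarrow\kappa(\xi)$ does exist, but for a different reason: since $\spec\kappa(\xi)\to Z$ is affine, $\Gamma(Z,\kappa(\xi))=\kappa(\xi)$ and these global sections generate the sheaf on every affine open. So the argument is salvageable, but the sentence you wrote is false.

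The paper sidesteps both issues by passing through $\calO_{\overline{\xi}}$, the structure sheaf of the reduced closure of $\xi$: one has a single surjection $\calO_Z\twoheadrightarrow\calO_{\overline{\xi}}$ (no infinite direct sum, no claim about arbitrary quasi-coherent sheaves), hence $j^*F(\calO_{\overline{\xi}})=0$; then $\kappa(\xi)$ is a filtered colimit of copies of $\calO_{\overline{\xi}}$ (the fraction field of an integral domain as a colimit of principal localizations), and $F$, $j^*$ commute with filtered colimits. Your approach is a bit more direct once the surjection is properly justified; the paper's is slightly more economical in that it never needs to argue about global generation.
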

\begin{proof}Let $\overline{\xi}$ be the closure of $\xi$ in $Z$ equipped with the reduced closed subscheme structure (so that $\overline{\xi}$ is integral). Then from the surjective morphism $0=j^*(F(\calO_Z))\to j^*(F(\calO_{\overline{\xi}}))$, we conclude that  $j^*(F(\calO_{\overline{\xi}}))=0$. Now, $\kappa(\xi)$ is a filtered colimit of $\calO_{\overline{\xi}}$. Therefore, $j^*F(\kappa(\xi))=0$.
\end{proof}

Now let $i:U\to X, j:V\to Y$ as above. We claim that $F$
induces an equivalence $j^*\circ F\circ
i_*:\on{Qcoh}(U)\to\on{Qcoh}(V)$. Observe that the functor
$i^*:\on{Qcoh}(X)\to\on{Qcoh}(U)$ induces an equivalence
$\on{Qcoh}(X)/\ker(i^*)\cong\on{Qcoh}(U)$ and
\[
\ker(i^*)=\{\calF\in\on{Qcoh}(X),\ \calF|_U=0\}.
\]
Likewise, we have similarly description for $\on{Qcoh}(V)$. Therefore, it is
enough to prove that $F(\ker(i^*))=\ker(j^*)$. Let
\[
\ker(i^*)^c=\{\calF\in\on{Qcoh}(X),\ \calF|_U=0 \mbox{ and }\calF \mbox{ is locally finitely generated}\}
\]
and similar $\ker(j^*)^c$. Since every object in $\ker(i^*)$ is a filtered colimit of objects in $\ker(i^*)^c$,
it is enough to prove that $F(\ker(i^*)^c)=\ker(j^*)^c$. By
applying the functor $F^{-1}$, it is enough to show that
$F(\ker(i^*)^c)$ is a subcategory of $\ker(j^*)^c$. Let
$\calF\in\ker(i^*)^c$. As it is finitely generated and $X$ is
quasi-compact, there is some closed subscheme $W\to X$, $W\cap
U=\emptyset$, such that $\calF$ is indeed a quasi-coherent sheaf
on $W$. We must prove that $j^*F(\calF)=0$, or
\[j^*F(\calF)\cong
j^*q_*(p^*\calF\otimes\calK)\cong
q_*(p^*\calF\otimes\calK|_{W\times_SV})=0.\] It is enough to show
that $\calK|_{W\times_SV}$=0. This can be proven by the following
reduction. Since $\calK|_{W\times_SV}$ is locally finitely
generated, it is enough to prove for any closed point $x$ in
$W\times_SV$, $\calK\otimes\kappa(x)=0$. Let $\xi=p(x)$ be the image
of $x$ under the projection $p:W\times_SV\to W$. It is enough to
prove that $\calK|_{\xi\times_SV}=0$. Let $\kappa(\xi)$ be the
push-forward of the structure sheaf of $\xi$ to $W$. Since $\xi\to
W$ and $q:\xi\times_SV\to V$ are affine morphisms, we obtain that
\[j^*F(\kappa(\xi))\cong q_*(\calK|_{\xi\times_SV}),\]
and therefore it is enough to prove that $j^*F(\kappa(\xi))=0$. Since $W\subset Z$ as closed subset, $\xi\in Z$ and the assertion follows from Lemma \ref{point case}.

We have shown that $j^*\circ F\circ i_*:\on{Qcoh}(U)\to\on{Qcoh}(V)$ is an
equivalence. Now assume that $U=\spec A$ is affine. By the proof
of the special case, we conclude that there is an isomorphism $f:U\to V$ and $\calK|_{U\times_SV}$ is an invertible sheaf on the graph
$\Gamma_f$. It is easy to see this argument localizes well.
Therefore, the proposition follows.
\end{proof}

It is desirable to generalize Theorem \ref{Aut} to the case where
$\calQ coh(X)$ is replaced by the sheaf of the categories of
quasi-coherent twisted sheaves on $X$ (cf. Example \ref{twist}).
Given a $\bbG_m$-gerbe $\calG$ over $X$, determined by a class
$\alpha\in H^2_{\on{fppf}}(X,\bbG_m)\cong H^2_{et}(X,\bbG_m)$, it is
expected one has the following short exact sequence
\begin{equation}\label{non-split}1\to\calP ic_X\to\GL(\calQ
coh_\calG(X))\to\underline{\on{Aut}}_S^\calG(X)\to 1\end{equation}
where $\underline{\on{Aut}}_S^\alpha(X)$ denotes the
$S$-autmorphisms $f$ of $X$ such that $f^*\calG\cong\calG$. We do
not know how to prove this in full generality. But it seems the
arguments used to prove Theorem \ref{Aut} can be generalized to
prove \eqref{non-split} in the case when there exists a locally free
$\calG$-twisted sheaf of finite rank, i.e., the gerbe $\calG$ comes
from an Azumaya algebra.

\begin{rmk}The important feature of \eqref{non-split} is that the
sequence does not split. It has very important applications in the
representation theory of double loop groups (cf. \cite{FZ1}).
\end{rmk}

\section{The Lie 2-algebra of $\GL(\calC)$}\label{Lie 2-alg gl(C)}
Recall that if $G$ is an algebraic group, then its tangent space at
the unit has a natural structure as a Lie algebra. If $G$ is just a
presheaf of groups over $(\mathbf{Sch}/S)$, then under some mild
conditions (cf. \cite{Dem}), the tangent space of $G$ at the unit
also carries on a structure as a Lie algebra. In \S \ref{2-grps to
Lie 2-algs}, we generalize the constructions to sheaves of 2-groups.
We will write down the conditions for 2-groups parallel to those in
\cite{Dem}. Under these conditions, we will show that the tangent
space at the unit of a 2-group has a structure as a Lie 2-algebra.
For the detailed definitions about Lie 2-algebras, see \S \ref{dic}.

In \S \ref{gl(C)}, we will show
that the 2-group $\GL(\calC)$ we considered in this paper
satisfies these conditions. Therefore, its tangent space at the
unite $\gl(\calC)$ is naturally a Lie 2-algebra.

\subsection{The Lie 2-algebra of a 2-group}\label{2-grps to Lie 2-algs}
This subsection is totally formal and is parallel \cite{Dem}, which
discusses the Lie algebras of a (pre)sheaf of groups. Therefore, in
this subsection, we will concentrate on necessary definitions and
the statements of propositions. Only a few proofs are sketched.

Let us still assume that $S$ is a separated, quasi-compact scheme.
Let $\calX$ be a stack over $(\mathbf{Aff}/S)_{fppf}$. Let us
recall the tangent space of $\calX$.

For a quasi-coherent sheaf $M$ on $S$, let $D_S(M):=\calO_S\oplus
M$ denote the sheaf of $\calO_S$-algebras on $S$, with
multiplication given by $(a,m)\cdot(a',m')=(aa',am'+a'm)$. Let
$I_S(M)=\mathbf{Spec}D_S(M)$. Denote $D_S:=D_S(\calO_S)$ and
$I_S:=I_S(\calO_S)$. Then for any quasi-coherent $\calO_S$-module
$M$, we define
\[
T\calX(M):=\underline{\Hom}_S(I_S(M),\calX),
\]
and the tangent space of $\calX$ over $S$ is
\[
T\calX:=T\calX(\calO_S).
\]
$T\calX(-)$ defines a covariant functor from the category of
quasi-coherent sheaves on $S$ to the (2-)category of stacks over
$(\mathbf{Aff}/S)_{fppf}$. In particular, the map $0\to M$ defines
the zero section 1-morphism $z_M:\calX\to T\calX(M)$ and the map
$M\to 0$ defines the projection 1-morphism $p_M:T\calX(M)\to
\calX$.

Since $\calO_S$ acts on any $M$ by multiplication, $\calO_S$ indeed
acts on $T\calX(M)$. Let us recall that this means that there is a
rule which assigns any $S'=\spec R\to S$, and $r\in R$ a functor
$r:T\calX(M)(S')\to T\calX(M)(S')$, and any $r,r'\in R$ an
isomorphism of functors $a_{r,r'}:rr'\cong r\circ r'$ such that
$1\in R$ is assigned to the identity functor and the two
isomorphisms between $rr'r''$ and $r\circ r'\circ r''$ coincide.
Furthermore, such rule should respect any pullback $S''\to S'$.

Let $M,N$ be two quasi-coherent $\calO_S$-modules. From the
projections $M\oplus N\to M$ and $M\oplus N\to N$, one obtains a
canonical map
\begin{equation}\label{E}
T\calX(M\oplus N)\to T\calX(M)\times_{\calX}T\calX(N).
\end{equation}
\begin{dfn}\label{Cond(E)}
We say that $\calX$ satisfies the Condition (E)\footnote{We use this name because when $\calX$ is a presheaf of sets,
similar condition is called Condition (E) in \cite{Dem}. } if for $M,N$
free $\calO_S$-modules of finite rank, the 1-morphism in (\ref{E}) is an
isomorphism.
\end{dfn}
It is well-known that if $\calX$ is an algebraic stack, then it
satisfies the Condition (E).

If $\calX$ satisfies the condition (E), then $T\calX$ has a
structure as a strictly commutative Picard stack over $\calX$.
Namely, let us choose once and for all a quasi-inverse of (\ref{E}).
From the addition $\calO_S\oplus\calO_S\to\calO_S$, we obtain the
monoidal structure
\begin{equation}\label{addition}
+:T\calX\times_\calX T\calX\cong T\calX(\calO_S\oplus\calO_S)\to T\calX.
\end{equation}
The associativity and the commutativity constraints come from the
associativity and the commutativity of the addition. Furthermore,
since $\calO_S$ acts on $T\calX$, $T\calX$ is indeed an
$\calO_S$-linear strictly commutative Picard stack\footnote{We refer to \S \ref{dic} for the detailed discussion of this
notion. } over $\calX$. Observe that different choices of the quasi-inverses of \eqref{E} will give canonically 1-isomorphic $\calO_S$-linear Picard stack structures on $T\calX$.

Now if $u\in \calX(S)$, we define
$T_u\calX(M):=S\times_\calX T\calX(M)$ and
$T_u\calX:=T_u\calX(\calO_S)$. If $\calX$ satisfies the condition
(E), then $T_u\calX$ is a strictly commutative $\calO_S$-linear
Picard stack over $S$. If $F:\calX\to\calY$ is a 1-morphism of
stacks, then for any $u\in\calX(S)$, it induces an
$\calO_S$-linear 1-homomorphisms of Picard stacks
$T_uF:T_u\calX\to T_{F(u)}\calY$ (cf. Definition \ref{hom}).

From now on, we will assume that $\calX=\calG$ is a stack of
2-groups on $(\mathbf{Aff}/S)_{fppf}$. Let $I:S\to\calG$ be the
unit. Let us denote $\Lie(\calG)(M)=T_I\calG(M)$ for $M$ a
quasi-coherent $\calO_S$-module and denote
$\frakg=\Lie(\calG)=\Lie(\calG)(\calO_S)$. If $\calG$ satisfies the
Condition (E), then $\frakg$ is an $\calO_S$-linear Picard stack
over $S$, and therefore, a 2-group over $S$. On the other hand, we
may regard $\frakg$ as the kernel of the 2-group 1-homomorphism
$p:T\calG\to\calG$ (induces from the natural closed embedding $S\to
I_S$), which gives $\frakg$ a second 2-group structure over $S$. We
have the following proposition parallel to Proposition 3.9 of
\cite{Dem} and can by proven similarly.

\begin{prop}\label{same group structure} The identity morphism of $\frakg$ is a canonical 1-isomorphism of these two 2-group structures on $\frakg$.
\end{prop}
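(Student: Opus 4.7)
The plan is to establish the proposition via a 2-categorical Eckmann-Hilton argument, paralleling Demazure's treatment of Lie algebras of group functors. On each test ring $A$, both 2-group structures live on the same underlying groupoid $\frakg(\spec A) = \ker(\calG(D_A) \to \calG(A))$ and share the same unit object $I_A$, so it suffices to exhibit a canonical interchange 2-isomorphism between the two multiplications and extract from it the desired identification.

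First I would make the two multiplications explicit. The composition product $\otimes$ on $\frakg$ is inherited pointwise from the 2-group structure of $\calG(D_A)$, restricted to the kernel. The additive product $+$ is induced by the $\calO_S$-algebra homomorphism
\[
s : D_S(\calO_S \oplus \calO_S) \to D_S, \quad (a, m, n) \mapsto (a, m+n),
\]
composed with the inverse of the Condition (E) 1-isomorphism $T\calG(\calO_S \oplus \calO_S) \simeq T\calG \times_\calG T\calG$; restricting to the fiber over the unit $I$ gives $+$ on $\frakg$.

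The crucial observation is that $s$ is a ring homomorphism, so the induced functor $\calG(D_A(A \oplus A)) \to \calG(D_A)$ is a 2-group 1-homomorphism, i.e. it intertwines the composition multiplications on source and target. Combining this with the naturality of the Condition (E) identification in the module argument, one obtains for $a,b,c,d \in \frakg(\spec A)$ a canonical interchange 2-isomorphism
\[
(a + c) \otimes (b + d) \cong (a \otimes b) + (c \otimes d),
\]
natural in the four arguments and compatible with $I_A$. Specializing $a = d = I_A$ yields a canonical 2-isomorphism $c \otimes b \cong c + b$, while specializing $b = c = I_A$ yields $a \otimes d \cong a + d$; comparing the two specializations furnishes both the claimed 1-isomorphism between the two 2-group structures and, as a byproduct, the commutativity constraint that exhibits the common structure as a strictly commutative Picard structure.

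The main obstacle is coherence bookkeeping: one must check that the above specializations assemble into a monoidal 1-isomorphism, i.e., that they respect the associativity and unit constraints of both $\otimes$ and $+$, and that the resulting 1-isomorphism is canonical (in particular independent of the chosen quasi-inverse to the Condition (E) morphism, which the excerpt already notes affects the Picard structure only up to canonical 1-isomorphism). These checks reduce, after unwinding the natural transformations, to repeated applications of the Coherence Theorem (Proposition \ref{coherence}) for 2-groups together with the functoriality of Condition (E) in the module $M$. This is entirely parallel to Demazure's original argument for group functors, with group-theoretic identities replaced by their coherent 2-group analogues.
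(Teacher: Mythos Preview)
Your proposal is correct and follows precisely the approach the paper indicates: the paper offers no independent proof but refers to Demazure's Proposition 3.9, and the Eckmann--Hilton argument you outline---using that the addition map $D_S(\calO_S\oplus\calO_S)\to D_S$ is an $\calO_S$-algebra homomorphism to obtain the interchange 2-isomorphism, then specializing---is exactly the 2-categorical transcription of Demazure's proof. The coherence bookkeeping you flag is genuine but routine, handled as you say by the Coherence Theorem and naturality of the Condition~(E) identification.
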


Therefore, in what follows, we will not distinguish these two
2-group structures on $\frakg$.

Let $\calP$ be a strictly commutative $\calO_S$-linear Picard
stack. We denote $\underline{\on{Aut}}_{\calO_S-\on{pic}}(\calP)$
the 2-group of $\calO_S$-linear 1-automorphisms of $\calP$, i.e.
$\underline{\on{Aut}}_{\calO_S-\on{pic}}(\calP)$ is the substack
of $\underline{\Hom}_{\calO_S-\on{pic}}(\calP,\calP)$ (see
Definition \ref{hom}) consisting of those $(F,\tau)$ such that $F$
is a 1-isomorphism.

From the splitting exact sequence
\[
1\to\frakg\stackrel{i_p}{\to} T\calG\substack{p\\\rightleftharpoons\\ z}\calG\to 1.
\]
We obtain a 1-homomorphism of 2-groups
\[
\on{Ad}:\calG\to\underline{\on{Aut}}_{\calO_S-\on{pic}}(\frakg)
\]
by sending $\on{Ad}_g(X):=C(z(g),i_p(X))$, where $C$ is the
conjugation action as defined in (\ref{conj}). Observe that there
is a canonical isomorphism $p(\on{Ad}_g(X))\cong I_\calG$.
Therefore, $\on{Ad}_g(X)$ is indeed an element in $\frakg$. It is a routine work to check that $\on{Ad}_g$ has a natural structure as an object in ${\on{Aut}}_{\calO_S-\on{pic}}(\frakg)$, and $\on{Ad}$ has a natural structure as a 1-homomorphism of 2-groups.

To obtain the Lie 2-algebra structure on $\frakg$, we will have to
make an additional assumption on the 2-group $\calG$. Let us begin
with a discussion of the strictly commutative $\calO_S$-linear
Picard stacks. Until the end of this subsection, Picard stacks will
be assumed to be strictly commutative.

Let $\calP$ be an $\calO_S$-linear Picard stack. For any
$f:S'\to S$, let $\calP_{S'}$ denote the restriction of $\calP$ to $S'$. Then there is an $\calO_S$-linear 1-homomorphism of Picard
stacks
\[
\calP\otimes_{\calO_S} f_*\calO_{S'}\to f_*\calP_{S'}.
\]
In particular, if $S'=I_S(M)$, we obtain the
\[
\calP\otimes_{\calO_S} T\calO_S(M)\to T\calP(M).
\]

Following \cite{Dem}, we define
\begin{dfn}\label{good}
An $\calO_S$-linear Picard stack $\calP$ is \emph{good} if the
above 1-homomorphism is a 1-isomorphism for any free
$\calO_S$-module $M$ of finite rank. A stack of 2-groups $\calG$
is \emph{good} if it satisfies the Condition (E), and
$\frakg=\Lie(\calG)$ is a good $\calO_S$-linear Picard stack in
the above sense.
\end{dfn}

\noindent\emph{Caution.} Observe that for an $\calO_S$-linear
Picard stack $\calP$, if it is good as an $\calO_S$-linear Picard
stack, then it is good as a 2-group. However, the vice versa does
not necessarily hold. In what follows, if we say an
$\calO_S$-linear Picard stack $\calP$ good, we mean it is good as
$\calO_S$-linear Picard stacks.

\medskip

From \S \ref{dic}, we associate each $\calO_S$-linear Picard stack
$\calP$ a 2-term complex of $\calO_S$-modules $\calP^\flat$ (over
$(\mathbf{Aff}/S)_{fppf}$). One can easily show that if
$\calP^\flat$ has quasi-coherent cohomology sheaves, then $\calP$
is a good $\calO_S$-linear Picard stack. Therefore, if $\calG$ is
an algebraic stack of 2-groups, then $\calG$ is a good 2-group.
From now on, let us assume that our stack of 2-groups $\calG$ is
good. We have the following proposition parallel Proposition 4.5
and Theorem 4.7 of \cite{Dem}. Although the proof is also parallel, we nevertheless sketch it due to its importance.

\begin{prop}\label{LieGL=gl} If $\calP$ is a good $\calO_S$-linear
Picard stack, then we have the following canonical 1-isomorphism
\[
\rho:\Lie(\underline{\on{Aut}}_{\calO_S-\on{pic}}(\calP))\cong\underline{\Hom}_{\calO_S-\on{pic}}(\calP,\calP).
\]
In addition, the 2-group
$\underline{\on{Aut}}_{\calO_S-\on{pic}}(\calP)$ is a good
2-group.
\end{prop}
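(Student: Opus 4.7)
The plan is to unwind the definition of $\Lie$ on the left using the goodness hypothesis on $\calP$, exhibit an explicit formula for $\rho$, and then bootstrap from there to check Condition (E) and goodness of the tangent Picard stack.

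First, I would unwind the LHS. By definition, an object of $\Lie(\underline{\on{Aut}}_{\calO_S-\on{pic}}(\calP))$ over $S'\to S$ is a pair $(\Phi,\iota)$ where $\Phi$ is an $\calO_{I_{S'}}$-linear auto-equivalence of $\calP_{I_{S'}}$ and $\iota:z^*\Phi\cong\on{Id}_{\calP_{S'}}$. Since $\calP$ is good, the natural morphism $\calP_{S'}\otimes_{\calO_{S'}} D_{S'}\to\calP_{I_{S'}}$ is a 1-isomorphism. Under this identification, and using $\varepsilon^2=0$ in $D_{S'}$, any $\calO_{I_{S'}}$-linear endomorphism of $\calP_{I_{S'}}$ that restricts to $\on{Id}$ on $\calP_{S'}$ is of the form $\on{Id}+\varepsilon F$ for a uniquely determined $\calO_{S'}$-linear 1-homomorphism $F:\calP_{S'}\to\calP_{S'}$, and conversely every such $F$ defines an auto-equivalence (its inverse being $\on{Id}-\varepsilon F$). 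This recipe $(\Phi,\iota)\mapsto F$ defines $\rho$; the same recipe with $M$ in place of $\calO_S$ gives the analogue $T_I\underline{\on{Aut}}_{\calO_S-\on{pic}}(\calP)(M)\cong\underline{\Hom}_{\calO_S-\on{pic}}(\calP,\calP\otimes M)$.

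Next, I would check that $\rho$ respects the Picard structure. Using Proposition \ref{same group structure} we may compute the addition on $\Lie$ via the diagonal map $\calO_S\to\calO_S\oplus\calO_S$ pulling back to the formal sum in $T\calG$. Tracking through the identification, the composite $(\on{Id}+\varepsilon F)\circ(\on{Id}+\varepsilon G)$, computed modulo $\varepsilon^2$, becomes $\on{Id}+\varepsilon(F+G)$, so $\rho$ is a 1-homomorphism of Picard stacks. The $\calO_S$-linearity is built in the same way from the scaling action on $\varepsilon$. Essential surjectivity is immediate, and full faithfulness follows because a 2-morphism between two deformations $\on{Id}+\varepsilon F$ and $\on{Id}+\varepsilon G$ restricting to identity on $\calP_{S'}$ corresponds to a 2-morphism $F\cong G$ in $\underline{\Hom}_{\calO_{S'}-\on{pic}}(\calP_{S'},\calP_{S'})$. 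This proves the first claim.

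For Condition (E) on $\underline{\on{Aut}}_{\calO_S-\on{pic}}(\calP)$: let $M,N$ be free of finite rank. An object of $T\underline{\on{Aut}}(\calP)(M\oplus N)$ is an auto-equivalence $\Phi$ of $\calP_{I_{S}(M\oplus N)}$, and by the same analysis as above, once we fix its restriction $\Psi$ to $S$, it is described by an $\calO_S$-linear homomorphism $\calP\to\calP\otimes(M\oplus N)$. Since $\underline{\Hom}_{\calO_S-\on{pic}}(\calP,\calP\otimes(M\oplus N))\cong\underline{\Hom}_{\calO_S-\on{pic}}(\calP,\calP\otimes M)\times\underline{\Hom}_{\calO_S-\on{pic}}(\calP,\calP\otimes N)$, the comparison map in (\ref{E}) is a 1-isomorphism.

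Finally, for the goodness of $\frakg:=\underline{\Hom}_{\calO_S-\on{pic}}(\calP,\calP)$, compute $T\frakg(M)$ using Hom--tensor adjunction between restriction and the base change provided by Lemma \ref{f^*F=(f^*F)f^*}: an $\calO_{I_S(M)}$-linear 1-homomorphism $\calP_{I_S(M)}\to\calP_{I_S(M)}$ is the same as an $\calO_S$-linear 1-homomorphism $\calP\to\calP\otimes D_S(M)$. For $M$ free of finite rank, $\calP\otimes D_S(M)$ is a finite direct sum of copies of $\calP$, and since $\underline{\Hom}_{\calO_S-\on{pic}}(\calP,-)$ is additive in the second argument, this identifies $T\frakg(M)$ with $\frakg\otimes_{\calO_S} D_S(M)=\frakg\otimes_{\calO_S} T\calO_S(M)$, which is exactly goodness. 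The main obstacle I anticipate is the careful bookkeeping of Picard structures and 2-morphisms in the identification $\Phi\leftrightarrow\on{Id}+\varepsilon F$: one must verify that the chosen quasi-inverse of the comparison map (\ref{E}) and the chosen quasi-inverse implicit in goodness of $\calP$ combine coherently so that $\rho$ actually lands in the strictly commutative Picard stack $\underline{\Hom}_{\calO_S-\on{pic}}(\calP,\calP)$ and is canonical up to canonical 2-isomorphism.
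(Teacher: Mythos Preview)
Your proposal is correct and follows essentially the same strategy as the paper: extract from a first-order deformation $\Phi$ of $\on{Id}$ its ``linear part'' $F$, using goodness of $\calP$ to make this extraction well-defined. The paper packages this slightly differently---it first defines $\rho'(F)(x)=F(z(x))-z(x)$ landing in $\Lie(\calP)$ and only then invokes goodness to identify $\Lie(\calP)\cong\calP$, whereas you invoke goodness upfront to decompose $\calP_{I_{S'}}\cong\calP_{S'}\otimes D_{S'}$ and read off $F$ directly---but the content is the same, and your $\on{Id}+\varepsilon F$ formalism is exactly what the paper itself uses later in the proof of Proposition~\ref{jacobiator}. One minor correction: your appeal to Lemma~\ref{f^*F=(f^*F)f^*} in the last paragraph is misplaced, since that lemma concerns base change for $\GL(\calC)$ of an abelian category, not Picard stacks; what you actually need there is just the goodness hypothesis on $\calP$ again (giving $\calP_{I_S(M)}\cong\calP\otimes D_S(M)$) together with additivity of $\underline{\Hom}_{\calO_S-\on{pic}}(\calP,-)$, which you already state.
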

\begin{proof}(Sketch.) Let
$\underline{\on{Aut}}_{\calO_S-\on{pic}}(\calP)\to\underline{\Hom}_{\calO_S-\on{pic}}(\calP,\calP)$
be the natural inclusion. It is easy to see that if $\calP$
satisfies the Condition (E), then both stacks also satisfy the
condition. Then the inclusion induces a canonical 1-isomorphism
\[
\Lie(\underline{\on{Aut}}_{\calO_S-\on{pic}}(\calP))\cong T_{\on{Id}}\underline{\Hom}_{\calO_S-\on{pic}}(\calP,\calP).
\]
Next, we construct a 1-homomorphism
\[
\rho':T_{\on{Id}}\underline{\Hom}_{\calO_S-\on{pic}}(\calP,\calP)\to\underline{\Hom}_{\calO_S-\on{pic}}(\calP,\Lie(\calP)),
\]
where the $\calO_S$-linear Picard stack structure on $\Lie(\calP)$
comes from the fact that it is the kernel of  the $\calO_S$-linear
Picard stacks morphism $p:T\calP\to\calP$\footnote{Observe that
since $\calP$ satisfies the Condition (E), $\Lie(\calP)$ has another
$\calO_S$-linear Picard stack structure coming from
\eqref{addition}. A priori, there is no reason that these two
$\calO_S$-linear structures are the same although the identity
morphism on $\Lie(\calP)$ is a canonical 1-isomorphism of the
underlying Picard stacks, by Proposition \ref{same group structure}.
However, as explained in \cite{Dem}, if $\calP$ is good, then these
two $\calO_S$-linear structure are also the same, i.e. the identity
morphism on $\Lie(\calP)$ is a canonical $\calO_S$-linear
1-isomorphism of the $\calO_S$-linear Picard stacks.}.

Let $F\in
T_{\on{Id}}\underline{\Hom}_{\calO_S-\on{pic}}(\calP,\calP)(S')$.
Then $F$ is a 1-homomorphism in
$\Hom_{\calO_{I_{S'}}-\on{pic}}(\calP_{I_{S'}},\calP_{I_{S'}})$
together with an isomorphism $\tilde{p}(F)\cong\on{Id}$ where
\[
\tilde{p}:\Hom_{\calO_{I_{S'}}-\on{pic}}(\calP_{I_{S'}},\calP_{I_{S'}})\to\Hom_{\calO_{S'}-\on{pic}}(\calP_{S'},\calP_{S'})
\]
is the natural restriction along the closed embedding $S'\to
I_S'$. We have the natural $\calO_S$-linear 1-homomorphism
$z:\calP_{S'}\to T\calP_{S'}$. We define $\rho'(F):\calP_{S'}\to
T\calP_{S'}$ by
\begin{equation}\label{rho'}
\rho'(F)(x)=F(z(x))-z(x)
\end{equation}
for $x\in\calP_{S'}$.
It is clear that $\rho'(F)$ has a canonical structure as an
$\calO_{S'}$-linear 1-homomorphism of $\calO_{S'}$-linear Picard
stacks. Let $p:T\calP_{S'}\to\calP_{S'}$ be the projection
1-homomorphism. Since \[p(\rho'(F)(x))\cong p(F(z(x)))-pz(x)\cong
\tilde{p}(F)(p(z(x)))-x\cong 0\] canonically, we indeed obtain an
$\calO_{S'}$-linear 1-homomorphism
$\calP_{S'}\to\Lie(\calP_{S'})$. We leave it to readers to verify
that $\rho'$ has a canonical structure as $\calO_S$-linear
1-homomorphism of Picard stacks.

Finally, if $\calP$ is good, then the canonical 1-homomorphism
$\calP\to\Lie(\calP)$ is a 1-isomorphism. And the first assertion
of proposition follows. The second assertion follows from the fact
that if $\calP$ is good, then
$\underline{\Hom}_{\calO_S-\on{pic}}(\calP,\calP)$ is good.
\end{proof}

This proposition allows us to construct an $\calO_S$-bilinear
1-homomorphism, called the Lie bracket,
\[
[-,-]:\frakg\times_S\frakg\to\frakg
\]
for a good 2-group $\calG$. Namely, from the 1-homomorphism
\[
\on{Ad}:\calG\to\underline{\on{Aut}}_{\calO_S-\on{pic}}(\frakg),
\]
we obtain
\[
\on{ad}:=T_I\on{Ad}:\frakg\to\Lie(\underline{\on{Aut}}_{\calO_S-\on{pic}}(\frakg))\cong\underline{\Hom}_{\calO_S-\on{pic}}(\frakg,\frakg).
\]
Then define
\[
[X,Y]:=\on{ad}(X)(Y), \mbox{ for } X,Y\in\frakg.
\]
\begin{prop}\label{skew-symmetric} There is a canonical 2-isomorphism $s$ of
$\calO_S$-bilinear 1-homomorphisms $\frakg\times\frakg\to\frakg,
(X,Y)\to[X,Y]$ and $(X,Y)\to-[Y,X]$, such that the following
diagram commutes
\[
\xymatrix{[X,Y]\ar^{s_{X,Y}}[r]\ar@{=}[d]&-[Y,X]\ar^{-s_{Y,X}}[d]\\
[X,Y]&-(-[X,Y])\ar^{a_{-1,-1}}[l]. }
\]
where $a_{r,r'}:r\circ
r'\cong rr', r,r'\in\calO_S$ is as in Definition \ref{R-linear
Pic} (a).
\end{prop}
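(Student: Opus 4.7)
The strategy is to extract both $s_{X,Y}$ and the commuting square as infinitesimal shadows of the general 2-group identities recorded in Corollary \ref{for (i)} and Lemma \ref{for (ii)}, applied respectively to $\calG$ and to the strictly commutative Picard stack $\frakg$. The first step, which I expect to be the main technical obstacle, is to reinterpret the bracket $[-,-]$ as the differential of the commutator functor $\on{comm}\colon \calG \times_S \calG \to \calG$ of \eqref{comm}. Unwinding $\on{Ad}_g(X) = C(z(g), i_p(X)) = z(g)\cdot i_p(X)\cdot\sigma(z(g))$, applying the explicit formula \eqref{rho'} for $\rho'$ used in the proof of Proposition \ref{LieGL=gl}, and invoking Proposition \ref{same group structure} to identify the differential of $\sigma$ on $\frakg$ with the Picard-inversion (i.e. negation), one checks that $[X,Y]=\on{ad}(X)(Y)$ agrees, as an $\calO_S$-bilinear 1-homomorphism $\frakg\times_S\frakg\to\frakg$, with $T_{(I,I)}\on{comm}$ evaluated at $(X,Y)$. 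The fact that this differential lands in $\frakg$ and is $\calO_S$-bilinear is automatic from the canonical isomorphisms $\on{comm}(I,y)\cong I$ and $\on{comm}(x,I)\cong I$ together with the Condition (E).

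Once this identification is in place, the 2-isomorphism $s_{x,y}\colon (x,y)\to\sigma(y,x)$ supplied by Corollary \ref{for (i)} differentiates at $(I,I)$ to the desired natural transformation $s_{X,Y}\colon [X,Y]\to -[Y,X]$ of $\calO_S$-bilinear 1-homomorphisms $\frakg\times_S\frakg\to\frakg$, the sign on the right arising since $\sigma$ linearizes to Picard-inversion on $\frakg$. The second assertion of Corollary \ref{for (i)}, namely that $\sigma(s_{y,x})\circ s_{x,y}=e_{(x,y)}\colon (x,y)\cong\sigma^2(x,y)$ in $\calG$, then linearizes to the identity
\[
(-s_{Y,X})\circ s_{X,Y} \;=\; e_{[X,Y]}\colon [X,Y]\longrightarrow -(-[X,Y])
\]
in $\frakg$, where $e_{[X,Y]}$ is the canonical 2-isomorphism $\on{Id}\cong\sigma^2$ coming from the 2-group structure on $\frakg$.

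It thus remains to show that $a_{-1,-1}\circ e_{[X,Y]}=\on{id}_{[X,Y]}$, i.e. that the canonical identification $x\cong\sigma^2(x)$ coming from the coherent 2-group structure on $\frakg$ coincides with the canonical identification $x\cong -(-x)$ coming from the $\calO_S$-linear Picard structure. This is precisely Lemma \ref{for (ii)} applied to $\frakg$ with the choice $x=[X,Y]$ and $y=-[X,Y]$: taking for $i\colon x+y\cong 0$ and $j\colon y+x\cong 0$ the two canonical isomorphisms supplied by the Picard inversion, their compatibility with the commutativity constraint $c_{x,y}$ is automatic from strict commutativity (since $c_{x,-x}$ is determined by $c_{x,x}=\on{id}_x$ and naturality). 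The lemma's conclusion then gives the desired identity, completing the commutativity of the square and hence the proof. The remaining verifications — that $s_{X,Y}$ is $\calO_S$-bilinear and that the whole package is natural in $(X,Y)$ — are formal consequences of the corresponding naturality at the level of $\calG$.
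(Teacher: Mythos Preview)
Your approach is essentially the same as the paper's, and the key ingredients (Corollary \ref{for (i)} for the construction of $s$ and its square identity, plus a compatibility between the Picard negation $-$ and the 2-group inversion $\sigma$ on $\frakg$) are exactly the ones the paper uses.

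One point deserves sharpening. Writing the bracket as ``$T_{(I,I)}\on{comm}$'' is misleading: the ordinary tangent of $\on{comm}$ at $(I,I)$ is trivial precisely because $\on{comm}(x,I)\cong I\cong\on{comm}(I,y)$, so the bracket is a genuinely second-order (bilinear) invariant, not a first-order one. The paper makes this precise not by differentiating $\on{comm}$ directly but by passing along the map $qi_p:\frakg\to\underline{\Hom}(I_S\times_SI_S,\calG)$ induced by $\varepsilon\mapsto\varepsilon_1\varepsilon_2$, and proving the key Lemma (formula \eqref{Lie bracket}) that under this embedding $[X,Y]$ becomes the honest 2-group commutator $(\pr_1^*X,\pr_2^*Y)$. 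Your outline (unwind $\on{Ad}$, apply \eqref{rho'}, use Proposition \ref{same group structure}) is exactly the computation that establishes this lemma, so the content is the same; you should just state it in these terms rather than as a naive tangent map.

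For the final compatibility $a_{-1,-1}\circ e_{[X,Y]}=\on{id}$, the paper does not invoke Lemma \ref{for (ii)} but instead records directly that in any $\calO_S$-linear Picard groupoid there is a canonical isomorphism $-x\cong\sigma(x)$ making the triangle with $a_{-1,-1}$ and $e$ commute. Your route via Lemma \ref{for (ii)} also works, but your justification that $c_{x,-x}$ is ``determined by $c_{x,x}=\on{id}_x$ and naturality'' is not quite the right reason: the needed compatibility of the two isomorphisms $x+(-x)\cong 0$ and $(-x)+x\cong 0$ with $c_{x,-x}$ comes from the axioms linking the $\calO_S$-action to the additive structure (Definition \ref{R-linear Pic}(c) and (ii)), not from strict commutativity alone.
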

\begin{proof}(Sketch.)Observe that we have the following morphisms of
$\calO_S$-algebras.
\[
\calO_S[\varepsilon]/(\varepsilon^2)\stackrel{q(\varepsilon)=\varepsilon_1\varepsilon_2}{\longrightarrow}\calO_S[\varepsilon_1,\varepsilon_2]/(\varepsilon_1^2,\varepsilon_2^2)\stackrel{j(\varepsilon_1\varepsilon_2)=0}{\longrightarrow}\calO_S[\varepsilon_1,\varepsilon_2]/(\varepsilon_1^2,\varepsilon_2^2,\varepsilon_1\varepsilon_2).
\]
Therefore, we obtain the 1-homomorphisms of 2-groups
\begin{equation}\label{q}
\frakg\stackrel{i_p}{\to}T\calG\stackrel{q}{\to}\underline{\Hom}(I_S\times_SI_S,\calG)\stackrel{j}{\to}\underline{\Hom}(I_S(\calO_S\oplus\calO_S),\calG).
\end{equation}

Let
\begin{equation}\label{pr}
\pr_i^*:T\calG=\underline{\Hom}(I_S,\calG)\to\underline{\Hom}(I_S\times
I_S,\calG)
\end{equation}
be the 1-morphism induced from the projection of $I_S^2$ to its
$i$th factor. We leave it to readers to verify the following lemma,
using the definition of Lie bracket and the construction given in
the previous proof (in particular the formula (\ref{rho'})).

\begin{lem}If $\calG$ is good, then for any $X,Y\in\frakg$, there is a canonical isomorphism
\begin{equation}\label{Lie bracket}
\begin{split}
(qi_p)([X,Y])&\cong
\pr_1^*(X)\pr_2^*(Y)\sigma(\pr_1^*(X))\sigma(\pr_2^*(Y))\\
&\cong\pr_2^*(X)\pr_1^*(Y)\sigma(\pr_2^*(X))\sigma(\pr_1^*(Y)).
\end{split}
\end{equation}
\end{lem}
In order to prove of the proposition, we need one more remark. If
$\calP$ is an $\calO_S$-linear Picard groupoid, then there is a
canonical isomorphism $-x\cong\sigma(x), x\in\calP$ such that the
following diagram commutes.
\[
\xymatrix{-(-x)\ar^{\cong}[rr]\ar_{a_{-1,-1}}[dr]&&\sigma^2(x)\\
&x\ar_{e}[ur]&},
\]
where $e:\on{Id}\cong\sigma^2$ is the canonical
isomorphism induced by (\ref{x=sigma2(x)}).

Now we prove the proposition. We define $s$ to be the composition
of the following canonical isomorphisms (using the above remark)
\[\begin{split}(qi_p)(s_{X,Y}):(qi_p)([X,Y])\cong& \pr_1^*(X)\pr_2^*(Y)\sigma(\pr_1^*(X))\sigma(\pr_2^*(Y))\\
\cong&\sigma(\pr_1^*(Y)\pr_2^*(X)\sigma(\pr_1^*(Y))\sigma(\pr_2^*(X))) \\
\cong&\sigma ((qi_p)([Y,X]))\cong (qi_p)(-[Y,X])\end{split}\]
Using the above remark again, and Corollary \ref{for (i)}, one can
show that $s_{X,Y}$ indeed satisfies
\[a_{-1,-1}(-s_{Y,X})s_{X,Y}=\on{id}_{[X,Y]}.\] We leave it to
readers to verify that $s$ is a 2-isomorphism of
$\calO_S$-bilinear 1-homomorphisms of $\calO_S$-linear Picard
stacks.
\end{proof}

According to the proof the above proposition, we can easily obtain

\begin{prop}\label{func1}Let $F:\calG\to\calH$ be a 1-homomorphism of good
2-groups. Then $dF=T_eF:\Lie(\calG)\to\Lie(\calH)$ satisfies:
there is a 2-canonical isomorphism of $\calO_S$-bilinear
1-homomorphims
\[
\theta:dF([-,-])\cong [dF(-),dF(-)],
\]
such that the following diagram commutes
\[
\xymatrix{dF([X,Y])\ar^{\theta}[rr]\ar_{dF(s)}[d]&&[dF(X),dF(Y)]\ar^{s}[d]\\
dF(-[Y,X])\ar^{\tau}[r]&-dF([Y,X])\ar^(0.4){-\theta}[r]&-[dF(Y),dF(X)],}
\]
where $\tau$ is the constraint as in Definition \ref{hom}.
\end{prop}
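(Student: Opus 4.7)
My plan is to adapt the strategy of the proof of Proposition~\ref{skew-symmetric}: construct $\theta$ using the commutator presentation~(\ref{Lie bracket}) of the Lie bracket together with the monoidality of $F$, and then verify the diagram by a coherence argument.

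To build $\theta$, I use that $F$ induces, by post-composition, a 2-group 1-homomorphism $F_*\colon\underline{\Hom}(I_S\times_S I_S,\calG)\to\underline{\Hom}(I_S\times_S I_S,\calH)$ that is compatible with the maps $\pr_i^*$ of~(\ref{pr}) and with $qi_p$ of~(\ref{q}), and that preserves $\sigma$ and the tensor product up to the canonical constraints associated with a 2-group 1-homomorphism. Applying $F_*$ to the identity~(\ref{Lie bracket}) in $\calG$ and then applying~(\ref{Lie bracket}) in $\calH$ produces a chain of canonical 2-isomorphisms
\[
(qi_p)\bigl(dF([X,Y])\bigr)\cong F_*\bigl(\pr_1^*(X)\pr_2^*(Y)\sigma(\pr_1^*(X))\sigma(\pr_2^*(Y))\bigr)\cong (qi_p)\bigl([dF(X),dF(Y)]\bigr),
\]
the middle step being the application of $F$'s monoidal constraint $\tau$ together with its $\sigma$-compatibility. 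Since the common expression lies in the kernel of the map $j$ of~(\ref{q}), this descends to the desired isomorphism $\theta_{X,Y}\colon dF([X,Y])\cong[dF(X),dF(Y)]$ in $\Lie(\calH)$, and $\calO_S$-bilinearity and 2-naturality are inherited from $dF$ and $F_*$.

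For the commutativity of the square, I push the entire diagram through $qi_p$ into the 2-group $\underline{\Hom}(I_S\times_S I_S,\calH)$. There the images of $\theta$, $dF(s)$, $-\theta$ and the constraint $\tau$ are all built from the associativity, unit, and $\sigma$-constraints of $\calH$ together with finitely many applications of the monoidal constraint of $F$; this is explicit for $s$ by the end of the proof of Proposition~\ref{skew-symmetric} (which invokes Corollary~\ref{for (i)}), and for $\theta$ by the construction above. The diagram then commutes by the Coherence Theorem (Proposition~\ref{coherence}) applied to $\calH$, together with coherence for monoidal functors.

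The main obstacle is bookkeeping: one must write each of $\theta$, $dF(s)$, $\tau$, and $-\theta$ as compositions of the same palette of basic constraints so that Proposition~\ref{coherence} can be invoked uniformly. In particular, factoring $dF(s)$ through the monoidal constraint $\tau$ of $F$ and the corresponding swap isomorphism in $\calH$ (as spelled out by Corollary~\ref{for (i)}) is the only substantive computation; once done, the remainder is purely formal.
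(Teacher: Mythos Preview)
Your proposal is correct and follows exactly the approach the paper intends: the paper gives no separate proof of this proposition but simply says ``According to the proof the above proposition, we can easily obtain'' it, and you have unpacked precisely that---constructing $\theta$ from the commutator formula~(\ref{Lie bracket}) via the monoidality of $F$, and verifying the square by reducing to coherence for 2-groups (Proposition~\ref{coherence}) together with coherence for monoidal functors, just as the proof of Proposition~\ref{skew-symmetric} does via Corollary~\ref{for (i)}.
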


The following proposition is parallel Proposition 4.8 of
\cite{Dem}. Although the proof is also parallel, we nevertheless sketch it. 

\begin{prop}\label{jacobiator}Let $\calP$ be a good $\calO_S$-linear
Picard stack, so that we have the following canonical
1-isomorphism
\[
\rho:\Lie(\underline{\on{Aut}}_{\calO_S-\on{pic}}(\calP))\cong\underline{\Hom}_{\calO_S-\on{pic}}(\calP,\calP).
\]
Then there is a 2-canonical isomorphism of two $\calO_S$-bilinear
1-homomorphisms from
$\Lie(\underline{\on{Aut}}_{\calO_S-\on{pic}}(\calP))\times\Lie(\underline{\on{Aut}}_{\calO_S-\on{pic}}(\calP))$
to $\underline{\Hom}_{\calO_S-\on{pic}}(\calP,\calP)$
\[
\theta:\rho([X,Y])\cong\rho(X)\rho(Y)-\rho(Y)\rho(X), \ \ X,Y\in\Lie(\underline{\on{Aut}}_{\calO_S-\on{pic}}(\calP)),
\]
such that the following diagram commutes.
\[
\xymatrix{\rho([X,Y])\ar^{\theta}[d]\ar^{\rho(s_{X,Y})}[r]&\rho(-[Y,X])\ar^{\tau}[r]& -\rho([Y,X])\ar^{-\theta}[d]\\
\rho(X)\rho(Y)-\rho(Y)\rho(X)\ar^{\cong}[rr]&&-(\rho(Y)\rho(X)-\rho(X)\rho(Y)).}
\]
\end{prop}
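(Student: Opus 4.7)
The plan is to follow Demazure's strategy from \cite{Dem}, lifted to the 2-categorical setting, by reducing the assertion to the explicit commutator formula \eqref{Lie bracket} (from the proof of Proposition \ref{skew-symmetric}) applied to the 2-group $\calG = \underline{\on{Aut}}_{\calO_S-\on{pic}}(\calP)$, and then translating commutators in $\calG$ into commutators of endomorphisms in $\underline{\Hom}_{\calO_S-\on{pic}}(\calP,\calP)$ via the defining formula \eqref{rho'} for $\rho$.

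First I would unpack $\rho$ in this setting: for $F \in \Lie(\calG)(S')$, formula \eqref{rho'} gives $\rho(F)(x) = F(z(x)) - z(x)$, which lands in $\Lie(\calP_{S'})$ and which by the goodness of $\calP$ is identified with an $\calO_{S'}$-linear endomorphism of $\calP_{S'}$. Under this identification, an object $F$ of $\calG$ over the dual numbers that reduces to $\on{Id}$ behaves like a first-order perturbation $z + \varepsilon\,\rho(F)$; in particular, the composition $F_1 \cdot F_2$ of two such perturbations (i.e., the monoidal structure of $\calG$, which is just composition of endomorphisms of $\calP$) should translate, at the appropriate order in $\varepsilon_1\varepsilon_2$, into the composition $\rho(F_1) \circ \rho(F_2)$ of endomorphisms of $\calP$.

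Next, I would apply \eqref{Lie bracket} to write $(qi_p)([X,Y])$ as the four-term commutator $\pr_1^*(X)\,\pr_2^*(Y)\,\sigma(\pr_1^*(X))\,\sigma(\pr_2^*(Y))$ computed in $\underline{\Hom}(I_S \times_S I_S, \calG)$, and then push this expression through $\rho$. Using the canonical identification $-x \cong \sigma(x)$ recorded in the proof of Proposition \ref{skew-symmetric}, which holds in the strictly commutative Picard structure on $\underline{\Hom}_{\calO_S-\on{pic}}(\calP,\calP)$, the four-term product collapses to $\rho(X)\rho(Y) + (-\rho(Y)\rho(X))$, i.e.\ to the commutator of endomorphisms $\rho(X)\rho(Y) - \rho(Y)\rho(X)$. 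This produces the desired 2-isomorphism $\theta$, and $\calO_S$-bilinearity plus naturality in $(X,Y)$ follow by applying the same translation to the constructions already used for Propositions \ref{skew-symmetric} and \ref{func1}.

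Finally I would verify the compatibility square. By its very construction in the proof of Proposition \ref{skew-symmetric}, $s_{X,Y}$ is the composite built from $\sigma$ and the reversal isomorphism $\sigma(xy) \cong \sigma(y)\sigma(x)$, so under $\theta$ it should correspond precisely to swapping and negating the two summands in the commutator of endomorphisms; the constraint $\tau$ from Definition \ref{hom} (expressing that a 1-homomorphism of Picard stacks commutes with $\sigma$/negation) and the $\calO_S$-action map $a_{-1,-1}$ then arise on the endomorphism side exactly where they must, and the square reduces to an instance of the Coherence Theorem (Proposition \ref{coherence}) combined with Corollary \ref{for (i)}. The main obstacle will be the bookkeeping of these 2-coherences: one must reconcile the two a priori different $\calO_S$-linear Picard stack structures on $\Lie(\calP)$ (cf.\ the footnote to Proposition \ref{LieGL=gl}), control the interaction between the monoidal structure (composition of endomorphisms) and the additive structure (pointwise sum) on $\underline{\Hom}_{\calO_S-\on{pic}}(\calP,\calP)$, and verify that the canonical identifications $\sigma^2 \cong \on{Id}$, $-(-x) \cong x$, and $\sigma(xy) \cong \sigma(y)\sigma(x)$ all assemble consistently. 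I would expect to invoke Proposition \ref{coherence} together with Lemma \ref{for (ii)} repeatedly, in the same spirit as the proof of Proposition \ref{skew-symmetric}.
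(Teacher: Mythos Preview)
Your approach is essentially the same as the paper's: both compute the commutator $\pr_1^*(X)\pr_2^*(Y)\sigma(\pr_1^*(X))\sigma(\pr_2^*(Y))$ in $\calG(I_{S'}\times_{S'}I_{S'})$ and read off the $\varepsilon_1\varepsilon_2$-coefficient using the goodness of $\calP$. The paper packages this slightly more concretely than you do: it observes that under the equivalence
\[
\Hom_{\calO_{S'}-\on{pic}}(\calP_{S'},\calP_{S'})\oplus\varepsilon\Hom_{\calO_{S'}-\on{pic}}(\calP_{S'},\calP_{S'})\;\cong\;\Hom_{\calO_{I_{S'}}-\on{pic}}(\calP_{I_{S'}},\calP_{I_{S'}}),
\]
the object $X$ is literally $1+\varepsilon\rho(X)$, so that $\sigma(X)\cong 1-\varepsilon\rho(X)$, and then the four-term product is expanded as an honest ring computation $(1+\varepsilon_1\rho(X))(1+\varepsilon_2\rho(Y))(1-\varepsilon_1\rho(X))(1-\varepsilon_2\rho(Y))\cong 1+\varepsilon_1\varepsilon_2(\rho(X)\rho(Y)-\rho(Y)\rho(X))$ using $\varepsilon_i^2=0$, together with the identification $(qi_p)(Z)\cong 1+\varepsilon_1\varepsilon_2\rho(Z)$. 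This bypasses separately invoking \eqref{Lie bracket} and then ``pushing through $\rho$'': the commutator formula and its translation into $\underline{\Hom}_{\calO_S-\on{pic}}(\calP,\calP)$ happen in a single stroke, and the compatibility square with $s$ follows immediately from the construction rather than requiring a separate appeal to Corollary \ref{for (i)} or Lemma \ref{for (ii)}. Your route is correct but slightly more laborious on the coherence side; the paper's $1+\varepsilon\rho(X)$ notation is the shortcut you are looking for.
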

\begin{proof} Let
$X\in\Lie(\underline{\on{Aut}}_{\calO_S-\on{pic}}(\calP))(S')$.
Then it gives an object
\[1+\varepsilon\rho(X)\in\Hom_{\calO_{S'}-\on{pic}}(\calP_{S'},\calP_{S'})\oplus\varepsilon\Hom_{\calO_{S'}-\on{pic}}(\calP_{S'},\calP_{S'}). \]
It is easy to see under the natural equivalence of Picard
groupoids
\[\Hom_{\calO_{S'}-\on{pic}}(\calP_{S'},\calP_{S'})\oplus\varepsilon\Hom_{\calO_{S'}-\on{pic}}(\calP_{S'},\calP_{S'})\cong\Hom_{\calO_{I_{S'}}-\on{pic}}(\calP_{I_{S'}},\calP_{I_{S'}})\]
$1+\varepsilon\rho(X)$ indeed belongs to
$\underline{\on{Aut}}_{\calO_S-\on{pic}}(\calP)(I_{S'})\subset\Hom_{\calO_{I_{S'}}-\on{pic}}(\calP_{I_{S'}},\calP_{I_{S'}})$,
and is the object corresponding to $X$.

Now if
$X,Y\in\Lie(\underline{\on{Aut}}_{\calO_S-\on{pic}}(\calP))(S')$,
then in
$\underline{\on{Aut}}_{\calO_S-\on{pic}}(\calP)(I_{S'}\times_{S'}I_{S'})$,
we have
\[\begin{array}{ll}&(1+\varepsilon_1\rho(X))(1+\varepsilon_2\rho(Y))\sigma(1+\varepsilon_1\rho(X))\sigma(1+\varepsilon_2\rho(Y))\\
\cong&(1+\varepsilon_1\rho(X))(1+\varepsilon_2\rho(Y))(1-\varepsilon_1\rho(X))(1-\varepsilon_2\rho(Y))\\
\cong&1+\varepsilon_1\varepsilon_2(\rho(X)\rho(Y)-\rho(Y)\rho(X)),\end{array}\]
since $\varepsilon_1^2=\varepsilon_2^2=0$. On the other hand, it
is easy to see that for any $X\in\frakg$,
$(qi_p)(X)\in\underline{\on{Aut}}_{\calO_S-\on{pic}}(\calP)(I_{S'}\times_{S'}I_{S'})$
is just $1+\varepsilon_1\varepsilon_2\rho(X)$, where
\[qi_p:\Lie(\underline{\on{Aut}}_{\calO_S-\on{pic}}(\calP))(S')\to\underline{\on{Aut}}_{\calO_S-\on{pic}}(\calP)(I_{S'}\times_{S'}I_{S'})\]
is as defined in the proof of Proposition \ref{skew-symmetric}.
Therefore,, there is a canonical isomorphism
\[1+\varepsilon_1\varepsilon_2\rho([X,Y])=(qi_p)([X,Y])\cong 1+\varepsilon_1\varepsilon_2(\rho(X)\rho(Y)-\rho(Y)\rho(X)),\]
or
\[\rho([X,Y])\cong\rho(X)\rho(Y)-\rho(Y)\rho(X)\]
canonically. It is easy to see from the construction that this
canonical isomorphism is an isomorphism of $\calO_S$-bilinear
1-homomorphisms between $\calO_S$-linear Picard stacks, and the
commutative diagram holds.
\end{proof}

Now we apply the above two propositions to the case
$\calP=\frakg=\Lie(\calG)$ for a good 2-group $\calG$. We thus
obtain that for any $X,Y,Z\in\frakg$, a canonical isomorphism of
$\calO_S$-trilinear 1-homomorphisms from
$\frakg\times\frakg\times\frakg\to\frakg$
\[
\on{ad}([X,Y])(Z)\cong\on{ad}(X)(\on{ad}(Y)(Z))-\on{ad}(Y)(\on{ad}(X)(Z)).
\]
Or in other words,
\[
j'_{X,Y,Z}:[[X,Y],Z]\cong[X,[Y,Z]]-[Y,[X,Z]].
\]
This will give us the following canonical isomorphism, denoted by
$j_{X,Y,Z}$:
\[
\begin{split}
&[[X,Y],Z]+[[Y,Z],X]+[[Z,X],Y]\\
\stackrel{j'}{\longrightarrow}\ &[X,[Y,Z]]-[Y,[X,Z]]+[[Y,Z],X]+[[Z,X],Y]\\
\stackrel{s}{\longrightarrow}\ &[X,[Y,Z]]-[Y,[X,Z]]-[X,[Y,Z]]+[Y,[X,Z]]\longrightarrow
0.
\end{split}
\]

\begin{prop}\label{alt}$j$ satisfies the following commutative diagram
\[
\xymatrix{[[X,Y],Z]+[[Y,Z],X]+[[Z,X],Y]\ar^(.75){j_{X,Y,Z}}[rr]\ar_{[s_{X,Y},Z]+[s_{Y,Z,X]}+[s_{Z,X},Y]}[d]&&  0\ar@{=}[dd]\\
[-[Y,X],Z]+[-[Z,Y],X]+[-[X,Z],Y]\ar^\cong[d]&& \\
 -[[Y,X],Z]-[[X,Z],Y]-[[Z,Y],X]\ar^(.75){-j_{Y,Z,X}}[rr]&&0.}
\]
\end{prop}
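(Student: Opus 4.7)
The plan is to reduce the proposition to the coherent 2-group identity of Corollary \ref{tri-identity} by lifting the situation to a three-variable version of the dictionary established in the proof of Proposition \ref{skew-symmetric}. First I would introduce the $\calO_S$-algebra $\calO_S[\varepsilon_1,\varepsilon_2,\varepsilon_3]/(\varepsilon_i^2)$ together with the morphism $\calO_S[\varepsilon]/(\varepsilon^2) \to \calO_S[\varepsilon_1,\varepsilon_2,\varepsilon_3]/(\varepsilon_i^2)$ sending $\varepsilon \mapsto \varepsilon_1\varepsilon_2\varepsilon_3$. In analogy with (\ref{q}), this produces a 1-homomorphism of 2-groups
\[
q^{(3)}i_p:\frakg \longrightarrow \underline{\Hom}(I_S\times_SI_S\times_SI_S,\calG).
\]
Iterating the argument that produced (\ref{Lie bracket}), one obtains canonical isomorphisms
\[
q^{(3)}i_p([[X,Y],Z]) \;\cong\; \bigl((\pr_1^*X,\pr_2^*Y),\,(\pr_3^*Z)^{\pr_2^*Y}\bigr),
\]
and cyclic analogues, where $(-,-)$ and the exponent notation refer to the commutator (\ref{comm}) and conjugation (\ref{conj}) in the 2-group $\underline{\Hom}(I_S\times_SI_S\times_SI_S,\calG)$.

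Under this dictionary, the Jacobiator $j_{X,Y,Z}$ transports to the canonical trivialization of the triple commutator $(\pr_1^*X,\pr_2^*Y,\pr_3^*Z)$ furnished by Corollary \ref{tri-identity}, while $j_{Y,Z,X}$ transports to the trivialization of the cyclically shifted triple $(\pr_2^*Y,\pr_3^*Z,\pr_1^*X)$. Similarly, the skew-symmetry $s_{X,Y}:[X,Y]\to -[Y,X]$ in $\frakg$, built in Proposition \ref{skew-symmetric} from the canonical comparison $-x\cong\sigma(x)$ that exists in any strictly commutative $\calO_S$-linear Picard stack, corresponds via $q^{(3)}i_p$ to the morphism $s_{x,y}:(x,y)\to\sigma(y,x)$ of Corollary \ref{for (i)} applied to each commutator factor.

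With these translations in place, the square asserted in Proposition \ref{alt} becomes a diagram living inside the coherent 2-group $\underline{\Hom}(I_S\times_SI_S\times_SI_S,\calG)$. Its commutativity expresses precisely the fact, guaranteed by the second assertion of Corollary \ref{tri-identity}, that the canonical automorphism of $((x,y),z^y)$ obtained by composing the trivialization $(x,y,z)\cong I$ with that of $(y,z,x)\cong I$ (after flipping each individual commutator via $s$) is the identity. Combined with three applications of Corollary \ref{for (i)} to transport $(x,y)$ to $\sigma(y,x)$ factor by factor, this yields the desired equality of morphisms in $\frakg$.

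The main bookkeeping difficulty will be consistently matching signs and orientations between the additive (Picard) picture inside $\frakg$ and the multiplicative (2-group) picture inside $\underline{\Hom}(I_S\times_SI_S\times_SI_S,\calG)$; in particular, one must check that the canonical isomorphism $-(-x)\cong x$ built from the $\calO_S$-action constraint $a_{-1,-1}$ agrees with the coherence isomorphism $\sigma^2(x)\cong x$ coming from (\ref{x=sigma2(x)}). This compatibility is exactly the content of Lemma \ref{for (ii)} in the strictly commutative setting. Once these identifications are fixed compatibly throughout, the commutativity of the pentagon of Proposition \ref{alt} follows formally from Corollary \ref{tri-identity} pulled back along $q^{(3)}i_p$.
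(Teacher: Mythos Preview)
Your approach takes a different route from the paper and, as written, misidentifies the key coherence fact.

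The paper's proof is much shorter and does not lift to the three-variable 2-group at all. Recall how $j_{X,Y,Z}$ was \emph{defined}: first $j'_{X,Y,Z}:[[X,Y],Z]\cong[X,[Y,Z]]-[Y,[X,Z]]$ was obtained from Propositions \ref{func1} and \ref{jacobiator} applied to $\on{ad}$, and then $j$ was produced from $j'$ by inserting two copies of $s$. Unwinding that definition on both columns of the diagram in Proposition \ref{alt}, the $s$'s cancel and one is reduced to showing that
\[
\xymatrix{[[X,Y],Z]\ar^{j'_{X,Y,Z}}[d]\ar^{[s_{X,Y},Z]}[r]&[-[Y,X],Z]\ar^{\cong}[r]&-[[Y,X],Z]\ar^{-j'_{Y,X,Z}}[d]\\
[X,[Y,Z]]-[Y,[X,Z]]\ar^{\cong}[rr]&&-([Y,[X,Z]]-[X,[Y,Z]])}
\]
commutes. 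But this is exactly the commutative square asserted in Proposition \ref{jacobiator} (with $\theta$ there playing the role of $j'$ here). So the whole proposition is a direct unwinding of the definition of $j$ together with the already-established compatibility of $\rho([X,Y])\cong\rho(X)\rho(Y)-\rho(Y)\rho(X)$ with $s$.

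Your plan---lift via $q^{(3)}i_p$ to $\underline{\Hom}(I_S^3,\calG)$ and invoke Corollary \ref{tri-identity}---is essentially the argument the paper uses for Proposition \ref{cyc}, not for \ref{alt}. The second assertion of Corollary \ref{tri-identity} says that the two trivializations of $(x,y,z)$ and $(y,z,x)$ induce the identity automorphism of $((x,y),z^y)$; that is a statement about \emph{cyclic} invariance and contains no information about how the trivialization interacts with applying $\sigma$ to each double commutator. After your three applications of Corollary \ref{for (i)}, the product you obtain is $(\sigma(y,x),z^y)(\sigma(z,y),x^z)(\sigma(x,z),y^x)$, which is not of the form $(y,z,x)$ in any rearrangement, so \ref{tri-identity} does not apply directly. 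One could presumably push your approach through by appealing to the full Coherence Theorem (Proposition \ref{coherence}) rather than \ref{tri-identity}, but that is a different argument than the one you sketched, and in any case it is considerably more laborious than simply unwinding the definition of $j$ and quoting Proposition \ref{jacobiator}.
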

\begin{proof}It is equivalent to show the commutativity of the
following diagram
\[
\xymatrix{[[X,Y],Z]\ar^{j'_{X,Y,Z}}[d]\ar^{[s_{X,Y},Z]}[r]&[-[Y,X],Z]\ar^{\cong}[r]&-[[Y,X],Z]\ar^{-j'_{Y,X,Z}}[d]\\
[X,[Y,Z]]-[Y,[X,Z]]\ar^{\cong}[rr]&&-([Y,[X,Z]]-[X,[Y,Z]]). }
\]
which follows directly from Proposition \ref{jacobiator}.
\end{proof}

In order to state the following key lemma, let us first recall the
definition $(x,y,z)$ given by (\ref{tricomm}) and the statement of
Corollary \ref{tri-identity}. Next, let
\[
m:\calO_S[\varepsilon]/(\varepsilon^2)\to\calO_S[\varepsilon_1,\varepsilon_2,\varepsilon_3]/(\varepsilon_1^2,\varepsilon_2^2,\varepsilon_3^2)
\]
be the $\calO_S$-algebra homomorphism defined by
$m(\varepsilon)=\varepsilon_1\varepsilon_2\varepsilon_3$. We thus
obtain a 1-homomorphism of 2-groups
\begin{equation}\label{m}
\frakg\stackrel{i_p}{\to} T\calG\stackrel{m}{\to}\underline{\Hom}(I_S\times I_S\times I_S,\calG).
\end{equation}
Let $\pr_i^*$ be the 1-morphism as in \eqref{pr} induced from the
projection of $I_S^3$ to its $i$th factor.
\begin{lem}\label{jacobiator2}For $X,Y,Z\in\frakg$, there is a canonical isomorphism in
$\underline{\Hom}(I_S\times I_S\times I_S,\calG)$,
\[
\xi_{X,Y,Z}:(mi_p)([[X,Y],Z])\cong((\pr_1^*X,\pr_2^*Y),(\pr_3^*Z)^{\pr_2^*Y}),
\]
such that the following diagram commutes
\[
\xymatrix{(mi_p)([[X,Y],Z]+[[Y,Z],X]+[[Z,X],Y])\ar[d]\ar^(0.75){(mi_p)(j_{X,Y,Z})}[r]&(mi_p)(0)\ar^{\cong}[dd]\\
(mi_p)([[X,Y],Z])+(mi_p)[[Y,Z],X])+(mi_p)([[Z,X],Y])\ar_{\xi_{X,Y,Z}\xi_{Y,Z,X}\xi_{X,Y,Z}}[d]&\\
(\pr_1^*X,\pr_2^*Y,\pr_3^*Z)\ar^\cong[r]&I_{\underline{\Hom}(I_S\times{I_S}\times{I_S},\calG)}.}
\]
\end{lem}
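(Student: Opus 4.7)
The plan is to build $\xi_{X,Y,Z}$ by applying the iterated commutator formula \eqref{Lie bracket} twice, then to deduce the commutative diagram from the Hall--Witt identity packaged in Corollary \ref{tri-identity}.

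First I would factor the ring map $m$ as the composition
\[
\calO_S[\varepsilon]/(\varepsilon^2)\xrightarrow{\varepsilon\mapsto\eta\varepsilon_3}\calO_S[\eta,\varepsilon_3]/(\eta^2,\varepsilon_3^2)\xrightarrow{\eta\mapsto\varepsilon_1\varepsilon_2}\calO_S[\varepsilon_1,\varepsilon_2,\varepsilon_3]/(\varepsilon_1^2,\varepsilon_2^2,\varepsilon_3^2).
\]
Applying \eqref{Lie bracket} to $[[X,Y],Z]$ along the first arrow yields a canonical isomorphism in $\calG(I_S(\eta)\times_S I_S(\varepsilon_3))$ onto the commutator $(\pr_\eta^*[X,Y],\pr_{\varepsilon_3}^*Z)$. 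Applying \eqref{Lie bracket} a second time to $\pr_\eta^*[X,Y]$ along $\eta\mapsto\varepsilon_1\varepsilon_2$ identifies it with $(\pr_1^*X,\pr_2^*Y)$. Composing gives a canonical isomorphism
\[
(mi_p)([[X,Y],Z])\cong((\pr_1^*X,\pr_2^*Y),\pr_3^*Z).
\]
To reach the twisted form $((\pr_1^*X,\pr_2^*Y),(\pr_3^*Z)^{\pr_2^*Y})$ required by the statement, I would use the general identity $y^x=(x,y)y$ valid in any coherent 2-group, together with the coherence-level rule $(a,bc)\cong(a,b)\cdot b(a,c)b^{-1}$, substituting $\pr_3^*Z=(\pr_2^*Y,\pr_3^*Z)^{-1}(\pr_3^*Z)^{\pr_2^*Y}$ inside the outer commutator. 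The resulting correction factors are iterated commutators of elements of the form $(\pr_i^*W_i,\pr_j^*W_j)$, which by \eqref{Lie bracket} are pullbacks of Lie brackets of bi-degree $\varepsilon_i\varepsilon_j$; the further nesting forces $\varepsilon_2^2$ to appear as a factor, so the corrections are canonically trivial in $\calG(I_S^3)$ by the Coherence Theorem. This produces $\xi_{X,Y,Z}$.

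Next I would handle the cyclic sum. The constructions of $\xi_{Y,Z,X}$ and $\xi_{Z,X,Y}$ are parallel, and the $S_3$-symmetry of $m$ (invariance under permutations of $\varepsilon_1,\varepsilon_2,\varepsilon_3$) allows me to relabel the projections in each term so that the three resulting objects are exactly the three factors $((x,y),z^y)$, $((y,z),x^z)$, $((z,x),y^x)$ in the Hall--Witt expansion \eqref{tricomm} of $(\pr_1^*X,\pr_2^*Y,\pr_3^*Z)$. Their product is canonically isomorphic to $I_{\underline{\Hom}(I_S\times I_S\times I_S,\calG)}$ by Corollary \ref{tri-identity}, providing the right column of the target diagram.

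Finally, the commutativity of the top square is verified by tracing through the construction of $j_{X,Y,Z}$, which is built from $j'$ (Proposition \ref{jacobiator} applied to $\calP=\frakg$) and $s$ (Proposition \ref{skew-symmetric}). Both are already expressed as canonical isomorphisms of iterated commutators on the 2-group side, so under $\xi$ the induced morphism matches the trivialization of $(\pr_1^*X,\pr_2^*Y,\pr_3^*Z)$. The remaining coherence check is precisely the second assertion of Corollary \ref{tri-identity}, that the two derivations of the trivialization agree; this plays the role for the Jacobi identity that Lemma \ref{for (ii)} played in the proof of Proposition \ref{skew-symmetric}. The main technical obstacle is the first step: managing the coherence bookkeeping that identifies $((\pr_1^*X,\pr_2^*Y),\pr_3^*Z)$ with $((\pr_1^*X,\pr_2^*Y),(\pr_3^*Z)^{\pr_2^*Y})$, ensuring that every intermediate correction is canonically annihilated by the relations $\varepsilon_i^2=0$ via Proposition \ref{coherence} alone.
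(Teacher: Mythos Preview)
Your argument is essentially correct and follows the same overall line as the paper: apply the commutator formula \eqref{Lie bracket} twice to reach $((\pr_1^*X,\pr_2^*Y),\pr_3^*Z)$, then pass to the twisted form using the vanishing $\varepsilon_2^2=0$, and finally feed the cyclic sum into the Hall--Witt identity of Corollary \ref{tri-identity}. The paper likewise omits the verification of the second part as ``elementary but tedious,'' and your outline via the $S_3$-symmetry of $m$ together with the two forms of \eqref{Lie bracket} is the natural route.

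The only real difference is in the passage from $((\pr_1^*X,\pr_2^*Y),\pr_3^*Z)$ to $((\pr_1^*X,\pr_2^*Y),(\pr_3^*Z)^{\pr_2^*Y})$. You do this by substituting $\pr_3^*Z=(\pr_2^*Y,\pr_3^*Z)^{-1}(\pr_3^*Z)^{\pr_2^*Y}$ and expanding via commutator identities, arguing that each correction term carries an $\varepsilon_2^2$ and so dies. The paper instead isolates a single Sublemma: for any $W\in\frakg$, the element $(mi_p)(W)$ commutes canonically with $\pr_2^*(Y)$ in $\underline{\Hom}(I_S^3,\calG)$ (again because the product has $\varepsilon_2^2$ as a factor). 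It then writes out $((\pr_1^*X,\pr_2^*Y),(\pr_3^*Z)^{\pr_2^*Y})$ explicitly and slides $\pr_2^*Y$ past the degree-$\varepsilon_1\varepsilon_2\varepsilon_3$ pieces using this Sublemma directly. The two mechanisms rest on the same vanishing, but the paper's packaging avoids the commutator-calculus bookkeeping you flag as the ``main technical obstacle'': once the Sublemma is stated, the identification is a four-line chain of canonical isomorphisms with no residual correction terms to track.
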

\begin{proof}(Sketch.) We only sketch the proof of the first part of the lemma. The proof for the second part is elementary but tedious, and therefore is omitted. First, using the canonical isomorphism (\ref{Lie bracket}), we have the following canonical isomorphism
\[
(mi_p)([[X,Y],Z])\cong((\pr_1^*X,\pr_2^*Y),\pr_3^*Z).
\]
Next, we shall that there is a canonical isomorphism in $\underline{\Hom}(I_S\times I_S\times I_S,\calG)$
\[
((\pr_1^*X,\pr_2^*Y),\pr_3^*Z)\cong ((\pr_1^*X,\pr_2^*Y),(\pr_3^*Z)^{\pr_2^*Y}).
\]
To this end, we need
\begin{sublem}Recall the map (\ref{q}). Then for any $X,Y\in\frakg$, there is a canonical isomorphism in $\underline{\Hom}(I_S\times I_S,\calG)$
likewise, recall the map (\ref{m}). Then for any $X,Y\in\frakg$, there is a canonical isomorphism in $\underline{\Hom}(I_S\times I_S\times I_S,\calG)$
\[(mi_p)(X)\pr_2^*(Y)\cong \pr_2^*(Y)(mi_p)(X).\]
\end{sublem}
By this lemma, we thus have the canonical isomorphisms
\[\begin{split}&((\pr_1^*X,\pr_2^*Y),(\pr_3^*Z)^{\pr_2^*Y})\\
\cong&(\pr_1^*X,\pr_2^*Y)\pr_2^*Y\pr^*_3Z\sigma(\pr_2^*Y)\sigma(\pr_1^*X,\pr_2^*Y)\pr_2^*Y\sigma(\pr_3^*Z)\sigma(\pr_2^*Y)\\
                                           \cong&\pr_2^*Y(\pr_1^*X,\pr_2^*Y)\pr_3^*Z\sigma((\pr_1^*X,\pr_2^*Y))\sigma(\pr_3^*Z)\sigma(\pr_2^*Y)\\
                                           \cong&\pr_2^*((\pr_1^*X,\pr_2^*Y),\pr_3^*Z)\sigma(\pr_2^*Y)\\
                                          \cong&((\pr_1^*X,\pr_2^*Y),\pr_3^*Z).
\end{split}\]
The first part of the lemma follows.
\end{proof}
There are some consequences of this lemma.

\begin{prop}\label{cyc}$j$ satisfies the following commutative diagram.
\[
\xymatrix{[[X,Y],Z]+[[Y,Z],X]+[[Z,X],Y]\ar^(.75){j_{X,Y,Z}}[rr]\ar^{\cong}[d]&& 0\ar@{=}[d]\\
[[Y,Z],X]+[[Z,X],Y]+[[X,Y],Z]\ar^(.75){j_{Y,Z,X}}[rr]&& 0. }
\]
for any $X,Y,Z\in\frakg$.
\end{prop}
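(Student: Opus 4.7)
The plan is to reduce Proposition \ref{cyc} to the cyclic-compatibility statement already encoded in the second part of Corollary \ref{tri-identity}, using Lemma \ref{jacobiator2} as the bridge. Concretely, I will show that $j_{X,Y,Z}$ and $j_{Y,Z,X}$ (pre-composed with the canonical cyclic reassembly in $\frakg$) become the same morphism after applying the 1-homomorphism
\[
mi_p:\frakg\to\underline{\Hom}(I_S\times I_S\times I_S,\calG).
\]
Because $i_p$ is the faithful inclusion of a kernel and $m^*$ is pullback along the triple-dual-number map, the morphism sets in $\frakg$ between objects of the form $[[A,B],C]$ and $0$ are detected by $mi_p$, so this reduction is genuine.

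First I would apply Lemma \ref{jacobiator2} twice: once for the triple $(X,Y,Z)$ and once for $(Y,Z,X)$. The first application identifies $(mi_p)(j_{X,Y,Z})$ with a composition whose essential content is the canonical trivialization $(\pr_1^*X,\pr_2^*Y,\pr_3^*Z)\cong I$ of Corollary \ref{tri-identity}; the second identifies $(mi_p)(j_{Y,Z,X})$ with the canonical trivialization $(\pr_1^*Y,\pr_2^*Z,\pr_3^*X)\cong I$. The cyclic permutation $S_{X,Y,Z}\cong S_{Y,Z,X}$ coming from the strictly commutative Picard structure of $\frakg$ translates, under $mi_p$, into the cyclic reordering of the three factors that assemble the tricommutator.

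Next I would invoke the second part of Corollary \ref{tri-identity}, which asserts precisely that the two trivializations $(x,y,z)\cong I$ and $(y,z,x)\cong I$ are compatible with the cyclic reassembly of the three factors $((x,y),z^y)$, $((y,z),x^z)$, $((z,x),y^x)$ (the induced automorphism of any single factor is the identity map). Combining this with the previous step yields the equality of the two morphisms after applying $mi_p$, and hence the commutativity of the diagram in $\frakg$ by the faithfulness observation in the first step.

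The hard part will be the bookkeeping in tracking how the cyclic permutation of the three summands $[[X,Y],Z]$, $[[Y,Z],X]$, $[[Z,X],Y]$ in $\frakg$ corresponds, under $mi_p$ together with the identifications $\xi_{A,B,C}$ of Lemma \ref{jacobiator2}, to the correct cyclic reordering of the three tricommutator factors. Since different applications of Lemma \ref{jacobiator2} assign distinct factors of $(\pr_1^*X,\pr_2^*Y,\pr_3^*Z)$ to the same element of $\frakg$, matching these framings is not automatic but is a purely coherence-theoretic exercise, ultimately relying on the Coherence Theorem (Proposition \ref{coherence}) applied to the compositions of associativity and commutativity constraints involved in the cyclic reassembly.
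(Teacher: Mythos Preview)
Your approach is essentially the same as the paper's: both reduce the statement to Corollary \ref{tri-identity} via Lemma \ref{jacobiator2}. The only organizational difference is that the paper first invokes Lemma \ref{for (ii)} to convert the commutativity of the square into the assertion that the induced automorphism of $[[X,Y],Z]$ is the identity, and then checks this via transport along $mi_p$; you instead argue directly by faithfulness of $mi_p$ that equality of the two morphisms can be tested in $\underline{\Hom}(I_S\times I_S\times I_S,\calG)$. These are equivalent packagings of the same idea (and indeed the paper's transport step tacitly uses the same faithfulness), so your proposal is correct. One small remark: the faithfulness of $m$ is not quite as automatic as you suggest, but it does follow from the Cartesian diagram established in the proof of goodness (Proposition \ref{goodness}), so you may want to cite that rather than the bare fact that $m$ is a pullback.
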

\begin{proof}By Lemma \ref{for (ii)}, it is enough to prove the
induced isomorphism $[[X,Y],Z]\to[[X,Y],Z]$ is the identity map.
But this follows from Corollary \ref{tri-identity} and Lemma
\ref{jacobiator2}.
\end{proof}

\begin{prop}\label{func2}Let $F:\calG\to\calH$ be a 1-homomorphism of good
2-groups. Then the canonical 2-isomorphism
\[
\theta:dF([-,-])\cong [dF(-),dF(-)]
\]
as in Proposition \ref{func1} satisfies the following condition
\[
\begin{CD}dF(\sum[[X,Y],Z])@>dF(j)>>0\\
@VV\theta V @ | \\
\sum[[dF(X),dF(Y)],dF(Z)]@>j>>0,
\end{CD}
\]
where $\sum$ denotes the sum over the cyclic permutations of
$X,Y,Z$.
\end{prop}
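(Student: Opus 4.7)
The strategy mirrors that of Proposition \ref{cyc}: reduce the commutativity to a triple-commutator identity in the 2-groups $\calG$ and $\calH$ via Lemma \ref{jacobiator2}, then invoke the Coherence Theorem to see that a 1-homomorphism of 2-groups carries the canonical constraints in $\calG$ to those in $\calH$.

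First I would apply the 1-homomorphism $mi_p$ of \eqref{m} to both rows of the diagram in the proposition. The 1-homomorphism $F:\calG\to\calH$ induces in the obvious way a 1-homomorphism $F_\sharp:\underline{\Hom}(I_S\times I_S\times I_S,\calG)\to\underline{\Hom}(I_S\times I_S\times I_S,\calH)$, and by Lemma \ref{jacobiator2} the image of $\sum[[X,Y],Z]$ under $mi_p$ is canonically identified with the triple commutator $(\pr_1^*X,\pr_2^*Y,\pr_3^*Z)$ in $\calG$, with $(mi_p)(j_{X,Y,Z})$ corresponding to the canonical trivialization of Corollary \ref{tri-identity}. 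Applying $F_\sharp$ converts this into a trivialization of $F_\sharp((\pr_1^*X,\pr_2^*Y,\pr_3^*Z))$, which via the monoidal constraints of $F_\sharp$ matches the canonical trivialization of $(\pr_1^*dF(X),\pr_2^*dF(Y),\pr_3^*dF(Z))$ computed directly in $\calH$. The proposition then becomes the statement that these two trivializations agree, together with the claim that the composite of three $\theta$'s intertwines them; both are instances of the fact that a 1-homomorphism of 2-groups preserves the canonical isomorphism $(x,y,z)\cong I$, which follows from the Coherence Theorem since the trivialization is built solely from associativity, unit, and inversion constraints.

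The main obstacle will be bookkeeping: identifying $(mi_p)(\theta)$, which arises from the natural monoidal constraints of $F$ on binary commutators, with the $F_\sharp$-constraints on the three factors $((x,y),z^y)$, $((y,z),x^z)$, $((z,x),y^x)$ that compose the triple commutator, and then propagating the resulting commutativity from the triple-tangent stack back to $\Lie(\calH)$. For this last step I would follow the approach used in Proposition \ref{cyc}: invoke Lemma \ref{for (ii)} to reduce the desired commutativity to showing that a specific induced automorphism of $[[dF(X),dF(Y)],dF(Z)]$ in the strictly commutative Picard stack $\Lie(\calH)$ is the identity, and then verify this via Lemma \ref{jacobiator2} together with the naturality of the 2-group constraints under $F$.
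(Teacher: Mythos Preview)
Your approach is essentially the paper's: reduce via Lemma \ref{jacobiator2} to the triple commutator $(x,y,z)$ and then use that a 1-homomorphism of 2-groups carries the canonical trivialization $(x,y,z)\cong I$ to the corresponding one in the target. The paper states exactly this observation as a single commutative square
\[
\begin{CD}
\Phi(x,y,z)@>>>\Phi(I_{\calG_1})\\
@VVV@VVV\\
(\Phi(x),\Phi(y),\Phi(z))@>>>I_{\calG_2}
\end{CD}
\]
for any 1-homomorphism $\Phi:\calG_1\to\calG_2$, and then says the proposition follows from this together with Lemma \ref{jacobiator2}.

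The one place you diverge is the final paragraph: you propose to pass back from $\underline{\Hom}(I_S^3,\calH)$ to $\Lie(\calH)$ via Lemma \ref{for (ii)}, imitating the proof of Proposition \ref{cyc}. This detour is unnecessary here. In Proposition \ref{cyc} that lemma was needed because one had to compare two distinct trivializations of the same object and show a certain induced automorphism is the identity. In the present proposition the diagram already lives at the level of Picard-stack morphisms, and Lemma \ref{jacobiator2} gives a commutative square identifying $(mi_p)(j)$ with the canonical trivialization on the nose; combining that square for $\calG$ and for $\calH$ with the $\Phi$-square above yields the desired commutativity directly, without invoking Lemma \ref{for (ii)}. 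Your argument would still go through, but it is more work than required.
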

\begin{proof}We have the following observation. If $\Phi:\calG_1\to\calG_2$ is a 1-homomorphism of 2-groups, then for any $x,y,z\in\calG_1$, the following diagram commutes.
\[
\xymatrix{\Phi(x,y,z)\ar[r]\ar[d]&\Phi(I_{\calG_1})\ar[d]\\
(\Phi(x),\Phi(y),\Phi(z))\ar[r]&I_{\calG_2}.}
\]
Now the proposition follows from the above observation and Lemma \ref{jacobiator2}.
\end{proof}

\begin{prop}\label{jac identity}Let $\calP$ be a good $\calO_S$-linear
Picard stack. Under the canonical 1-isomorphism
\[
\rho:\Lie(\underline{\on{Aut}}_{\calO_S-\on{pic}}(\calP))\cong\underline{\Hom}_{\calO_S-\on{pic}}(\calP,\calP),
\]
the isomorphism
$\rho(j_{X,Y,Z}):\rho([[X,Y],Z]+[[Y,Z],X]+[[Z,X],Y])\cong
\rho(0)\cong 0$ is the same as the isomorphism
\[
\begin{split}&\rho([[X,Y],Z]+[[Y,Z],X]+[[Z,X],Y])\\
\cong\ &\sum\rho([X,Y])\rho(Z)-\rho(Z)\rho([X,Y]) \quad\quad \mbox{By Proposition \ref{jacobiator}}\\
\cong\ &\sum(\rho(X)\rho(Y)\rho(Z)-\rho(Y)\rho(X)\rho(Z)-(\rho(Z)\rho(X)\rho(Y)-\rho(Z)\rho(Y)\rho(X))\\
\cong\ & 0,
\end{split}
\]
where $\sum$ denotes the sum over the cyclic permutations of
$X,Y,Z$.
\end{prop}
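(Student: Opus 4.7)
The plan is to reduce the equality of the two 2-isomorphisms to a diagram chase in the strictly commutative Picard groupoid $\underline{\Hom}_{\calO_S-\on{pic}}(\calP,\calP)$, by unpacking the definition of $j_{X,Y,Z}$ step by step and translating each atomic piece through $\rho$.

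First I would write $j_{X,Y,Z}$ as the three-stage composition of its definition: (i) the arrow $j'_{X,Y,Z}\colon [[X,Y],Z]\cong [X,[Y,Z]]-[Y,[X,Z]]$ coming from Proposition \ref{jacobiator} applied to the good 2-group $\calG=\underline{\on{Aut}}_{\calO_S-\on{pic}}(\calP)$ with $\calP=\frakg$; (ii) the skew-symmetries $s_{[Y,Z],X}$ and $s_{[Z,X],Y}$ together with the $\calO_S$-bilinearity of $[-,-]$ in the second slot, used to turn $[[Y,Z],X]+[[Z,X],Y]$ into $-[X,[Y,Z]]+[Y,[X,Z]]$; and (iii) the elementary cancellation $A+(-A)\cong 0$ inside the target Picard groupoid. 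Applying $\rho$ to stage (i) and invoking Proposition \ref{jacobiator} once more, but now with $\calP$ itself in place of $\frakg$, I obtain $\rho([[X,Y],Z])\cong \rho([X,Y])\rho(Z)-\rho(Z)\rho([X,Y])$; a second application of the same proposition to $\rho([X,Y])$ then expands this into the alternating four-term sum of triple products. For stage (ii), the translation of $\rho(s)$ to the sign-flip in the target is exactly what the second commutative diagram of Proposition \ref{jacobiator} is designed to govern.

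Next I would expand the right-hand side of the stated Proposition literally as written: by construction it is nothing more than two nested applications of Proposition \ref{jacobiator} followed by the termwise cancellation of the cyclic sum
\[
\sum\bigl(\rho(X)\rho(Y)\rho(Z)-\rho(Y)\rho(X)\rho(Z)-\rho(Z)\rho(X)\rho(Y)+\rho(Z)\rho(Y)\rho(X)\bigr)\cong 0
\]
in $\underline{\Hom}_{\calO_S-\on{pic}}(\calP,\calP)$, summed over cyclic permutations of $X,Y,Z$.

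Finally I would compare the two results. After the unpacking above, both isomorphisms become compositions of the same atomic constraints — two applications of Proposition \ref{jacobiator}, $s$-to-sign translations governed by its bottom diagram, and cancellation moves inside a strictly commutative Picard groupoid — possibly arranged in different orders. Since the remaining arrows are built only from associativity and commutativity constraints, an application of Lemma \ref{for (ii)} (used once on each of the three cyclic summands) shows the two orderings agree. The main obstacle will be the bookkeeping around stage (ii): I must ensure that the orientation of $s$, which is built abstractly from $\sigma$ in the ambient 2-group $\underline{\on{Aut}}_{\calO_S-\on{pic}}(\calP)$, lines up with the naive sign flip inside $\underline{\Hom}_{\calO_S-\on{pic}}(\calP,\calP)$, including the correct insertion of the constraint $\tau$ of Definition \ref{hom}. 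Once this alignment is secured via the bottom square of Proposition \ref{jacobiator}, the rest is symbolic manipulation in a strictly commutative Picard groupoid.
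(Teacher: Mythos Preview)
Your plan has a genuine gap at the very first step: you conflate ``applying $\rho$ to $j'_{X,Y,Z}$'' with ``invoking Proposition \ref{jacobiator} for $\calP$''. These are different isomorphisms between the same objects, and their equality is precisely what has to be established. Recall how $j'_{X,Y,Z}$ is built in the paper: it is the composite of Proposition \ref{func1} applied to $\on{Ad}:\calG\to\underline{\on{Aut}}_{\calO_S-\on{pic}}(\frakg)$ (producing $\theta:\on{ad}([X,Y])\cong[\on{ad}X,\on{ad}Y]$) together with Proposition \ref{jacobiator} applied to the Picard stack $\frakg$, not to $\calP$. Pushing this composite through $\rho$ does \emph{not} tautologically give the isomorphism obtained from two applications of Proposition \ref{jacobiator} to $\calP$; you would need to track $\rho$ through the $\theta$ of Proposition \ref{func1} for $\on{Ad}$, and nothing in your outline explains how to do that. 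As written, stages (i)--(iii) in your argument simply rederive the right-hand side of the proposition, so the ``comparison'' at the end is circular.

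The paper avoids this altogether by not unpacking $j'$ componentwise. Instead it uses Lemma \ref{jacobiator2} to identify $(mi_p)(j_{X,Y,Z})$ with the canonical isomorphism $(\pr_1^*X,\pr_2^*Y,\pr_3^*Z)\cong I$ coming from the Coherence Theorem, and then repeats verbatim the computation in the proof of Proposition \ref{jacobiator}: write elements of $\underline{\on{Aut}}_{\calO_S-\on{pic}}(\calP)(I_{S'}^3)$ as $1+\varepsilon_i\rho(-)$ and expand the triple commutator explicitly. This directly yields the alternating sum of triple products and matches it to the stated chain of isomorphisms. If you want your diagram-chase approach to succeed, you must supply the missing link between $\rho\circ j'$ (built from $\on{Ad}$ and $\frakg$) and the iterated Proposition \ref{jacobiator} on $\calP$; the cleanest way to do that is exactly Lemma \ref{jacobiator2}.
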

The proof uses the same argument as in the proof of Proposition \ref{jacobiator}, once Lemma \ref{jacobiator2} is known.

Let us summarize the above construction into the following
theorem.
\begin{thm} Let $[-,-]:\frakg\times\frakg\to\frakg$ be the Lie
bracket we construct as above, and $s,j$ are the natural
isomorphisms. Then these data give $\frakg$ a structure of pseudo
Lie 2-algebra (cf. Definition \ref{Lie 2-alg}). The assignment
$\calG\to\Lie(\calG)$ is a functor from the 2-category of good
2-groups to the 2-category of pseudo Lie 2-algebras.
\end{thm}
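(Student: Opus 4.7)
The plan is to bundle together the constructions and propositions already established in this subsection into the verification of the axioms of a pseudo Lie 2-algebra (Definition \ref{Lie 2-alg}). All the hard work has been done: I simply need to identify which previously proven statement discharges each axiom and check that nothing is left over.

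First I would recall the structural data that must be exhibited on $\frakg$: an $\calO_S$-linear Picard stack structure, an $\calO_S$-bilinear 1-homomorphism $[-,-]:\frakg\times_S\frakg\to\frakg$, a skew-symmetry 2-isomorphism $s_{X,Y}:[X,Y]\cong -[Y,X]$, and a jacobiator 2-isomorphism $j_{X,Y,Z}$ witnessing the Jacobi identity, subject to coherence conditions. The Picard stack structure on $\frakg$ comes from Proposition \ref{same group structure} together with the Condition (E) on $\calG$. The bracket $[X,Y]:=\on{ad}(X)(Y)$ was constructed using the 1-isomorphism $\rho$ of Proposition \ref{LieGL=gl}, which is available precisely because $\calG$ is good. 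The isomorphisms $s$ and $j$ were constructed above after Proposition \ref{skew-symmetric} and Proposition \ref{alt} respectively.

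Next I would check the axioms one by one. The skew-symmetry coherence (the pentagon involving $s_{X,Y}$, $s_{Y,X}$ and $a_{-1,-1}$) is exactly the conclusion of Proposition \ref{skew-symmetric}. The alternating property of $j$ under swapping two arguments is Proposition \ref{alt}, and its invariance under cyclic permutation is Proposition \ref{cyc}. The internal coherence of the jacobiator, expressing the Jacobi identity as a composition of the bracket-commutator correspondence $\rho$ with the elementary fact that the associator of composition of endomorphisms is trivial, is Proposition \ref{jac identity}. Together these are all the axioms demanded in Definition \ref{Lie 2-alg}.

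For the functoriality statement, given a 1-homomorphism $F:\calG\to\calH$ of good 2-groups, the induced $dF=T_IF:\frakg\to\frakh$ is automatically an $\calO_S$-linear 1-homomorphism of Picard stacks by the general construction of tangent spaces. Compatibility of $dF$ with the brackets up to the canonical 2-isomorphism $\theta$ is Proposition \ref{func1}, and compatibility with the jacobiators is Proposition \ref{func2}. The compatibility of this assignment with 2-morphisms is a formal consequence of the functoriality of $T_I$, so the assignment $\calG\mapsto\Lie(\calG)$ and $F\mapsto dF$ is a 2-functor from the 2-category of good 2-groups to the 2-category of pseudo Lie 2-algebras.

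The only step that would require a little care, rather than a direct citation, is verifying that the coherence conditions in Definition \ref{Lie 2-alg} are literally the ones proved in Propositions \ref{skew-symmetric}, \ref{alt}, \ref{cyc}, \ref{jac identity}, \ref{func1} and \ref{func2}; this is bookkeeping about orientations of the various diagrams (in particular the role of $a_{-1,-1}$ and the canonical isomorphism $-x\cong\sigma(x)$), but no new content is needed. The potential main obstacle, namely Jacobi up to coherent homotopy, has already been disposed of by the argument via $\rho$ and the commutator identity \eqref{Lie bracket}, so at this point the theorem is just an assembly statement.
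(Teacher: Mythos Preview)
Your outline matches the paper's proof almost exactly: the paper also just cites the earlier propositions, assigning (i) to Proposition \ref{skew-symmetric}, (ii) to Proposition \ref{cyc}, (iii) to Proposition \ref{alt}, and functoriality to Propositions \ref{func1} and \ref{func2}.

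There is one small but genuine gap. You claim that the jacobiator coherence condition (iv) is discharged by Proposition \ref{jac identity} alone. But Proposition \ref{jac identity} is a statement about the jacobiator in $\Lie(\underline{\on{Aut}}_{\calO_S-\on{pic}}(\calP))\cong\underline{\Hom}_{\calO_S-\on{pic}}(\calP,\calP)$, not in $\frakg$ itself. To verify (iv) for $\frakg$ one must transport that identity along $\on{ad}=T_I\on{Ad}:\frakg\to\Lie(\underline{\on{Aut}}_{\calO_S-\on{pic}}(\frakg))$, and for this transport to respect the two competing trivializations of $\sum[[[X,Y],Z],W]$ one needs precisely the compatibility of $dF$ with the jacobiators established in Proposition \ref{func2}, applied to $F=\on{Ad}$. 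The paper accordingly says that (iv) is the \emph{combination} of Proposition \ref{func2} and Proposition \ref{jac identity}. You do cite Proposition \ref{func2} later for functoriality, so you have the ingredient in hand; you just have not invoked it where it is actually needed.
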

\begin{proof}The compatibility condition (i) in the Definition
\ref{Lie 2-alg} is given in Proposition \ref{skew-symmetric}; (ii)
is given in Proposition \ref{cyc}; (iii) is given in Proposition
\ref{alt}; and (iv) is the combination of Proposition \ref{func2}
and Proposition \ref{jac identity}.

If $F:\calG\to\calH$ is a 1-homomorphism of good 2-groups, then
$dF$ is a 1-homomorphism of pseudo Lie 2-algebras by Proposition
\ref{func1} and Proposition \ref{func2}.
\end{proof}

\subsection{The Lie 2-algebra $\gl(\calC)$}\label{gl(C)}
We will apply the general construction presented in the previous
subsection to the case of $\GL(\calC)$. We begin with the proof of
\begin{prop}\label{cond(E)}Let $\calC$ be a quasi-coherent sheaf of abelian categories over $S$. Then
$\GL(\calC)$ satisfies the Condition $(E)$.
\end{prop}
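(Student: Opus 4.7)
The plan is to verify Condition~(E) by explicitly checking, over each test ring $\spec A \to S$ and each pair of free $\calO_A$-modules $M, N$ of finite rank, that the comparison 1-morphism
\[
\Phi \colon \GL_{D_A(M \oplus N)}(\calC_{D_A(M \oplus N)}) \longrightarrow \GL_{D_A(M)}(\calC_{D_A(M)}) \times_{\GL_A(\calC_A)} \GL_{D_A(N)}(\calC_{D_A(N)})
\]
coming from the projections $p_M : D_A(M \oplus N) \to D_A(M)$ and $p_N$ is an equivalence of 2-groupoids. The key ring-theoretic input is the Cartesian identity $D_A(M \oplus N) \cong D_A(M) \times_A D_A(N)$, with both projections surjective and split by canonical sections $s_M, s_N$.

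The core of the argument is to construct a quasi-inverse $\Psi$. Given a triple $(F_1, F_2, \alpha)$ with $\alpha \colon (\epsilon_M)^*(F_1) \cong (\epsilon_N)^*(F_2)$ in $\GL_A(\calC_A)$, I would assemble an auto-equivalence $F$ of $\calC_{D_A(M \oplus N)}$ as follows. For $X \in \calC_{D_A(M \oplus N)}$, the pullbacks $p_M^*(X) \in \calC_{D_A(M)}$ and $p_N^*(X) \in \calC_{D_A(N)}$ share a common image $\epsilon^*(X) \in \calC_A$. Applying $F_1, F_2$ and the common $F_0 := (\epsilon_M)^*(F_1) \cong_\alpha (\epsilon_N)^*(F_2)$, and invoking Lemma~\ref{f^*F=(f^*F)f^*} to identify the various base changes, one obtains coherent data that glue to define $F(X) \in \calC_{D_A(M \oplus N)}$. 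Functoriality in $X$ produces $F$; that $F$ is an auto-equivalence (not merely an additive functor) follows from $F_1, F_2$ being auto-equivalences together with the (AB5) hypothesis, which allows us to assemble a two-sided quasi-inverse from $F_1^{-1}, F_2^{-1}, \alpha^{-1}$. For fully faithfulness of $\Phi$, a natural iso $\eta \colon F \cong F'$ is determined by the pair $(p_M^*(\eta), p_N^*(\eta))$ together with their compatibility over $\calC_A$, invoking Lemma~\ref{closed embedding} and (AB5).

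The main obstacle is that $\calC_{D_A(M \oplus N)}$ is \emph{not} the 2-fiber product $\calC_{D_A(M)} \times_{\calC_A} \calC_{D_A(N)}$ at the level of abelian categories: the latter corresponds to modules over the larger ring $D_A(M) \otimes_A D_A(N)$, where the cross-product $\varepsilon_M \varepsilon_N$ need not act by zero. Concretely, the natural map $X \to p_M^*(X) \times_{\epsilon^*(X)} p_N^*(X)$ has kernel $d_M M X \cap d_N N X$, which is in general nonzero, so a naive Milnor-style patching of objects fails. What rescues the argument at the 2-group level is that any $D_A(M \oplus N)$-linear auto-equivalence preserves the subobject $d_M M X \cap d_N N X$ functorially, and conversely the relation $d_M d_N = 0$ on $F(X)$ is automatic from the separate $D_A(M)$- and $D_A(N)$-linearity of $F_1, F_2$. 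The technical heart of the proof is thus to verify that the assembly $(F_1, F_2, \alpha) \mapsto F$ is well-defined on all of $\calC_{D_A(M \oplus N)}$ (not merely on the naive patching part), functorial, and that both composites $\Phi \circ \Psi$ and $\Psi \circ \Phi$ are canonically isomorphic to the respective identities; this bookkeeping with the various base-change and restriction functors of \S\ref{sheaf of abelian cat} is where most of the work sits.
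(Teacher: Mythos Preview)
Your outline correctly identifies the central obstacle: the functor
\[
F = p_M^* \times p_N^* : \calC_{D_A(M\oplus N)} \longrightarrow \calC_{D_A(M)} \times_{\calC_A} \calC_{D_A(N)}
\]
is not an equivalence, so one cannot simply transport an auto-equivalence $(F_1,F_2,\alpha)$ of the right-hand side back to the left. But your proposed fix does not close the gap. What you call ``gluing'' --- form $(F_1(p_M^*X),\,F_2(p_N^*X))$ together with the matching over $\calC_A$ and then reassemble an object of $\calC_{D_A(M\oplus N)}$ --- is exactly the composite $G\circ(F_1,F_2,\alpha)\circ F$, where $G$ is the right adjoint of $F$ (the fiber-product construction $X\times_Z Y$). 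Since $FG\cong\on{Id}$ but $GF\not\cong\on{Id}$ (the unit $X\to GF(X)$ has kernel $MX\cap NX$), this composite lands in the proper full subcategory $\calD=\on{Im}(G)$ and is \emph{not} an auto-equivalence of $\calC_{D_A(M\oplus N)}$. Your two remarks --- that auto-equivalences preserve $MX\cap NX$, and that $d_Md_N=0$ is automatic --- are both true but neither produces a value of $\Psi(F_1,F_2,\alpha)$ on an object outside $\calD$. (Incidentally, the fiber product $\calC_{D_A(M)}\times_{\calC_A}\calC_{D_A(N)}$ is \emph{not} the category of $D_A(M)\otimes_A D_A(N)$-modules; it embeds fully faithfully into $\calC_{D_A(M\oplus N)}$ via $G$, so the direction of the failure is the opposite of what your remark suggests.)

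The paper supplies the missing idea: rather than attempting to define $\Psi$ object-by-object, it first proves an abstract criterion (Lemma~\ref{criterion}). If $FG\cong\on{Id}$ and there exists a functor $H:\calC_{A'''}\to\calD$ with a natural epimorphism $H(X)\twoheadrightarrow X$, then every $X$ admits a functorial two-step resolution $H(X')\to H(X)\to X\to 0$ by objects of $\calD$. One then \emph{defines} $\tilde g|_\calD = G\circ g\circ F|_\calD$ (your gluing, restricted to where it works) and extends to all $X$ by taking the cokernel of $\tilde g$ applied to the resolution. The concrete $H$ is $f^*f_*$ for $f:R\to D_R(M\oplus N)$: Lemma~\ref{res} gives the epimorphism $f^*f_*X\twoheadrightarrow X$, and a separate check shows $f^*f_*X\in\calD$. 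This resolution step is the substantive content you are missing; without it there is no candidate for $\Psi(F_1,F_2,\alpha)(X)$ when $MX\cap NX\neq 0$.
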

\begin{proof}
Let $S'=\spec R$ be over $S$, and let $M,N$ are two free
$R$-modules of finite rank. We have the closed embeddings
\[
\xymatrix{&I_{S'}(M)\ar^{i_M}[dr]&\\
S'\ar^{j_M}[ur]\ar_{j_N}[dr]&&I_{S'}(M\oplus N).\\
&I_{S'}(N)\ar_{i_N}[ur]&}
\]
We thus obtain a 2-group homomorphism
\[
\GL(\calC)(D_{S'}(M\oplus N))\stackrel{i_M^*\times i_N^*}{\longrightarrow} \GL(\calC)(D_{S'}(M))\times_{\GL(\calC)(S')}\GL(\calC)(D_{S'}(N)).
\]
To prove the proposition, it is enough to show that it is an
isomorphism of 2-groups (i.e. equivalence of categories).  We
therefore could replace $S$ by $S'=\spec R$. It is clear that we
could assume that $\calC_{D_S(M)}$ (resp. $\calC_{D_S(N)}$, resp.
$\calC_{D_S(M\oplus N)}$) is the base change of the category
$\calC_R$ from $R$ to $D_S(M)$ (resp. to $D_S(N)$, resp.
$D_S(M\oplus N)$).

We will discuss the following more general settings. Let $\calC$ be
an abelian category over $S=\spec R$. Let $i:A'\to A, j:A''\to A$ be
$R$-algebra homomorphisms. Denote $A''':=A'\times_AA''$ and the
natural projections $p:A'''\to A', q:A'''\to A''$. We ask when
natural map
\begin{equation}\label{tildeF}
\tilde{F}:=p^*\times q^*:\GL_{A'''}(\calC_{A'''})\to
\GL_{A'}(\calC_{A'})\times_{\GL_A(\calC_A)}\GL_{A''}(\calC_{A''})
\end{equation}
is an isomorphism of 2-groups.

To achieve this, we should first analyze the corresponding abelian
categories. We have the functor
\[
F=p^*\times q^*:\calC_{A'''}\to
\calC_{A'}\times_{\calC_A}\calC_{A''}.
\]
Observe that $\calC_{A'}\times_{\calC_A}\calC_{A''}$ is indeed an
abelian category over $A'''$ (in an obvious way), and that $F$ is an
$A'''$-linear right exact functor. We claim that $F$ has a right
adjoint, which is exact.

Let us recall that objects in $\calC_{A}$ are of the form
$(X,\alpha)$, where $X\in\calC_R$ and $\alpha:A\to\End_{\calC_R}X$
that recovers the original $R$-structure on $X$ when composed with
$R\to A$. Likewise, objects in $\calC_{A'},\calC_{A''},\calC_{A'''}$
have similar descriptions. Now let $((X,\alpha),(Y,\beta),\varphi)$
be an object in $\calC_{A'}\times_{\calC_A}\calC_{A''}$. This means
that $(X,\alpha)$ is an object in $\calC_{A'}$, $(Y,\beta)$ is an
object in $\calC_{A''}$ and
\[\varphi:i^*(X,\alpha)\cong j^*(Y,\beta).\]
Let us choose for each $((X,\alpha),(Y,\beta),\varphi)$ and object
$(Z,\gamma)\in\calC_{A}$ and an isomorphism $(Z,\gamma)\cong
i^*(X,\alpha)\cong j^*(Y,\beta)$ in $\calC_A$. Recall that we have
the natural adjunction maps $(X,\alpha)\to i_*i^*(X,\alpha)$ and
$(Y,\beta)\to j_*j^*(Y,\beta)$. Therefore, in $\calC_R$, we have
the natural maps $X\to Z$ and $Y\to Z$ and we can form the fiber
product $X\times_ZY$, on which $A'''=A'\times_AA''$ acts via
$\alpha\times\beta$. It is easy to check this construction is
valid and therefore $(X\times_ZY,\alpha\times\beta)$ is an object
in $\calC_{A'''}$. In addition, if $(f,g)$ is a morphism from
$((X,\alpha),(Y,\beta),\varphi)$ to
$((X',\alpha'),(Y',\beta'),\varphi')$, then $f\times g$ is a
morphism from $(X\times_{Z}Y,\alpha\times\beta)$ to
$(X'\times_{Z'}Y',\alpha'\times\beta')$. Therefore, we obtain a
functor $G:\calC_{A'}\times_{\calC_A}\calC_{A''}\to\calC_{A'''}$.
From the definition, it is easy to see that $G$ is the right
adjoint of $F$.

\medskip

Now we return to analyze when $\tilde{F}$ is an isomorphism.
Observe that there is a natural equivalence
\[
\GL_{A'''}(\calC_{A'}\times_{\calC_A}\calC_{A''})\cong \GL_{A'}(\calC_{A'})\times_{\GL_A(\calC_A)}\GL_{A''}(\calC_{A''}).
\]
Let $\tilde{F}$ also denote the composition of the functor
$\tilde{F}$ in \eqref{tildeF} with a quasi-inverse of the above
equivalence. Then by Lemma \ref{f^*F=(f^*F)f^*}, we have for any
$g\in\GL_{A'''}(\calC_{A'''})$,
\[
F\circ g\cong \tilde{F}(g)\circ F:\calC_{A'''}\to\calC_{A'}\times_{\calC_A}\calC_{A''}.
\]
By the adjunction, we also obtain
\[
g\circ G\cong
G\circ\sigma(\tilde{F}(\sigma(g))):\calC_{A'}\times_{\calC_A}\calC_{A''}\to\calC_{A'''}.
\]
Recall that $\sigma$ denotes the inversion 1-anti-homomorphism of a
2-group, so that $\sigma(g)$ means the chosen quasi-inverse of $g$,
etc.

We claim

\begin{lem}\label{criterion}
Assume that the natural adjunction $F\circ G\to\on{Id}$ is an
isomorphism. Let
$\calD=G(\calC_{A'}\times_{\calC_A}\calC_{A''})\subset\calC_{A'''}$.
and assume that there is a functor $H:\calC_{A'''}\to\calD$ and a
natural transform from the functor
$\calC_{A'''}\stackrel{H}{\to}\calD\subset\calC_{A'''}$ to the
identity functor $\on{Id}:\calC_{A'''}\to\calC_{A'''}$ such that for
any $(X,\alpha)\in\calC_{A'''}$, the functorial map
$H(X,\alpha)\to(X,\alpha)$ is an epimorphism. Then $\tilde{F}$ is an
isomorphism.
\end{lem}
\begin{proof}To show that $\tilde{F}$ is an isomorphism, it is enough to show
that $\ker\tilde{F}\cong\on{Id}$ and that $\tilde{F}$ is
essentially surjective.

We first prove $\ker\tilde{F}\cong\on{Id}$. Let us write
$(X,\alpha)\in\calC_{A'''}$ just by $X$ for brevity. Then for any
$m:X\to Y$ a morphism in $\calC_{A'''}$, we have the following
diagram in $\calC_{A'''}$ with horizontal rows being exact
\begin{equation}\label{resolution}\begin{CD}
H(X')@>>> H(X)@>>> X@>>> 0\\
@VVV@VVH(m)V@VVmV \\
H(Y')@>>>H(Y)@>>>Y@>>>0,
\end{CD}\end{equation}
where $X'=\ker (H(X)\to
X)$ and $Y'=\ker(H(Y)\to Y)$. Observe that since $F\circ G\cong
\on{Id}$, $G$ is full and faithful and therefore $\calD$ is a full
subcategory of $\calC_{A'''}$. Therefore, the left square of the
above commutative diagram is indeed a commutative diagram in
$\calD$.

Let $g$ is an $A'''$-linear auto-equivalence of $\calC_{A'''}$ such
that $\tilde{F}(g)\cong\on{Id}$. Since $gG\cong
G\sigma(\tilde{F}(\sigma(g)))$, we have $g(\calD)\subset\calD$, and
therefore
\[g|_{\calD}\cong GFg|_{\calD}\cong
G\tilde{F}(g)F|_{\calD}\cong GF|_{\calD}\cong\on{Id}|_{\calD}\]
Therefore, the action of $g$ on the left square of the diagram
\eqref{resolution} is isomorphic to the action of the identity
functor. Therefore, $g$ is isomorphic to the identity functor.
Furthermore, it is clear any automorphism of $g$ which maps the
identity automorphism of $\on{Id}$ under $\tilde{F}$ must be the
identity automorphism itself. This proves that
$\ker\tilde{F}\cong\on{Id}$.

To prove $\tilde{F}$ is essentially surjective, let $g$ be an
$A'''$-linear auto-equivalence of
$\calC_{A'}\times_{\calC_A}\calC_{A''}$. Then define an
$A'''$-linear auto-equivalence $\tilde{g}$ of $\calC_{A'''}$ as
follows. First define $\tilde{g}|_{\calD}=GgF|_{\calD}$. Since
$FG\cong\on{Id}$, it is easy to see that $\tilde{g}$ is an
$A'''$-linear auto-equivalence of $\calD$. Then for general
$m:X\to Y$ a morphism in $\calC_{A'''}$, $\tilde{g}(m:X\to Y)$ is
defined as the cokernel of
\[\tilde{g}\left(\begin{CD}H(X')@>>> H(X)\\
@VVV@VVV \\
H(Y')@>>>H(Y)\end{CD}\right),\] where $X', Y'$ are as in
\eqref{resolution}. It is easy to see that $\tilde{g}$ is
well-defined object in $\GL_{A'''}(\calC_{A'''})$, and its image
under $\tilde{F}$ is $g$.
\end{proof}

To apply this lemma, we need the following two lemmas.

\begin{lem} (i) Assume that $j: A''\to A$ is surjective with
kernel $J$. Then for any
$((X,\alpha),(Y,\beta),\varphi)\in\calC_{A'}\times_{\calC_A}\calC_{A''}$,
\[p^*G((X,\alpha),(Y,\beta),\varphi)\cong (X,\alpha).\]
If in addition $J^2=0$, then there is a surjective map
\[q^*G((X,\alpha),(Y,\beta),\varphi)\to (Y,\beta).\]

\medskip

\noindent(ii) Assume that $i:A'\to A$ and $j:A''\to A$ are
surjective. Then $F\circ G\to\on{Id}$ is an isomorphism, and
therefore $G$ is full and faithful.

\medskip

\noindent(iii) Assume that $i:A'\to A$ and $j:A''\to A$ are
surjective. Let $I=\ker i, J=\ker j$ and assume that $J^2=0$. The
functor $G$ realizes $\calC_{A'}\times_{\calC_A}\calC_{A''}$ as a
full subcategory of $\calC_{A'''}$ consisting of
$(X,\alpha),X\in\calC_R,\alpha:A'''\to\End_{\calC_R} X$ such that
$\alpha(I)X\cap\alpha(J)X=0$, where $\alpha(I)X\cap\alpha(J)X$ is
defined to be the fiber product $\alpha(I)X\times_X\alpha(J)X$ in
$\calC_R$.

\medskip

\noindent(iv) Assumptions are as in (iii) Let $f:R\to A'''$ be the
structural map. Then for any $(X,\alpha)\in\calC_{A'''}$,
$f^*f_*(X,\alpha)\in G(\calC_{A'}\times_{\calC_A}\calC_{A''})$.
\end{lem}
\begin{proof}We first prove (i).
Let
$((X,\alpha),(Y,\beta),\varphi)\in\calC_{A'}\times_{\calC_A}\calC_{A''}$,
and $(Z,\gamma)\cong i^*(X,\alpha)\cong j^*(Y,\beta)\in\calC_A$.
We need to prove that there is an isomorphism
\[
p^*((X\times_{Z}Y,\alpha\times\beta))\cong (X,\alpha).
\]
We have the following pullback diagram in $\calC_R$
\[\begin{CD}
X\times_{Z}Y@>>>X\\
@VVV@VVV\\
Y@>>>Z.
\end{CD}\]
It is easy to see that this gives us the following pullback
diagram in $\calC_{A'''}$
\[\begin{CD}
(X\times_{Z}Y,\alpha\times\beta)@>>>p_*(X,\alpha)\\
@VVV@VVV\\
q_*(Y,\beta)@>>>(pi)_*(Z,\gamma).
\end{CD}\]
Since $p^*$ is right exact, we obtain a pullback diagram in
$\calC_{A'}$ by pulling back the above diagram along $p^*$.
\[\begin{CD}
p^*(X\times_{Z}Y,\alpha\times\beta)@>>>p^*p_*(X,\alpha)\\
@VVV@VVV\\
p^*q_*(Y,\beta)@>>>p^*(pi)_*(Z,\gamma).
\end{CD}\]
Since $j:A''\to A$ is surjective, $p:A'\times_AA''\to A'$ is
surjective. By Lemma \ref{closed embedding}, $p^*p_*(X,\alpha)\cong
(X,\alpha),p^*(pi)_*(Z,\gamma)\cong i_*(Z,\gamma)$. Furthermore, it
is easy to see that
\[p^*q_*(Y,\beta)\cong i_*j^*(Y,\beta)\cong i_*(Z,\gamma).\]
Therefore $p^*((X\times_{Z}Y,\alpha\times\beta))\cong (X,\alpha)$.

Next assume that $J^2=0$. From the map
$(X\times_{Z}Y,\alpha\times\beta)\to q_*(Y,\beta)$, we obtain the
natural map
\[q^*(X\times_ZY,\alpha\times\beta)\to (Y,\beta)\]
by adjunction. Let $(W,\delta)$ be the cokernel of this map. Since
$j^*$ is right exact and
\[j^*q^*(X\times_ZY,\alpha\times\beta)\cong
i^*p^*(X\times_ZY,\alpha\times\beta)\cong (Z,\gamma)\cong
j^*(Y,\beta),\] we have $j^*(W,\delta)=0$. That is,
$W=\delta(J)W$. But $J^2=0$, then $W=0$.

\medskip

(ii) follows from (i) immediately.

\medskip

Next we prove (iii). Let $(X,\gamma)\in\calC_{A'''}$. It is clear
that the kernel of the map $(X,\gamma)\to G(F(X,\gamma))$ is
$(\gamma(I)X\cap\gamma(J)X,\gamma)$. To prove the assertion, it is
enough to show that this map is surjective. Let $(Z,\delta)$ be the
cokernel so that we have the right exact sequence
\[(X,\gamma)\to G(F(X,\gamma))\to (Z,\delta)\to 0.\]
Since $F\circ G\cong\on{Id}$, by applying the right exact functor
$F$ to this sequence, we obtain that $F(Z,\delta)=0$. From the
definition of $F$, we have $Z=\delta(J)Z=\delta(I)Z$. But from
$J^2=0$, we obtain that $Z=0$.

\medskip

Finally, we prove (iv). First observe that as $R$-modules, $I$ and
$J$ are submodules of $A'''$, satisfying $I\cap J=0$. For any
$X\in\calR$, we have $f^*X=X\otimes_RA'''$, with the action of
$A'''$ denoted by $\alpha$. Then $\alpha(I)(X\otimes_RA''')$ (resp.
$\alpha(J)(X\otimes_RA''')$) is the image of the map $X\otimes_RI\to
X\otimes_RA'''$ ($X\otimes_RJ\to X\otimes_RA'''$). Therefore,
$\alpha(I)(X\otimes_RA''')\cap\alpha(J)(X\otimes_RA''')=0$. And (iv)
follows from (iii).
\end{proof}

\begin{lem}\label{res}Let $f:A\to B$ be a ring homomorphism, and $\calC$ an
$A$-linear abelian category. Then for any $X\in\calC_B$, the
adjunction map $f^*f_*X\to X$ is an epimorphism.
\end{lem}
\begin{proof} Let $Z$ be the cokernel of $f^*f_*X\to X$. Then
since $f_*$ is exact, $f_*Z$ is the cokernel of $f_*f^*f_*X\to
f_*X$. The natural adjunction $f_*X\to f_*f^*(f_*X)$ gives a
splitting of above morphism. Therefore, $f_*Z=0$, which implies
that $Z=0$.
\end{proof}

To complete the proof of the proposition, one only needs to apply
Lemma \ref{criterion} with $H=f^*f_*$.\end{proof}

Let $e:S\to\GL(\calC)$ be the unit map. We define
\[\gl(\calC):=T_e\GL(\calC).\]
\begin{cor}The stack $\gl(\calC)$ is a strictly
commutative $\calO_S$-linear Picard stack over $S$.
\end{cor}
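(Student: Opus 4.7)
The plan is to deduce this directly from Proposition \ref{cond(E)} together with the general machinery developed in \S\ref{2-grps to Lie 2-algs}. Recall that for an arbitrary stack $\calX$ over $(\mathbf{Aff}/S)_{fppf}$ satisfying the Condition (E), the tangent stack $T\calX$ is automatically endowed with the structure of a strictly commutative $\calO_S$-linear Picard stack over $\calX$: the monoidal structure is produced from the addition map $\calO_S \oplus \calO_S \to \calO_S$ via the composition \eqref{addition}, the associativity and commutativity constraints are inherited from those of the addition, and the $\calO_S$-action on $T\calX(M)$ (induced by the multiplication action of $\calO_S$ on $M$) upgrades this to an $\calO_S$-linear structure. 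Pulling back along $u:S \to \calX$ then gives $T_u\calX$ the structure of a strictly commutative $\calO_S$-linear Picard stack over $S$.

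To apply this recipe, I would first invoke Proposition \ref{cond(E)}, which establishes that $\GL(\calC)$ satisfies the Condition (E) for any quasi-coherent sheaf of abelian categories $\calC$ over $S$. Specializing $u$ to the unit section $e: S \to \GL(\calC)$, this yields immediately that $\gl(\calC) = T_e\GL(\calC)$ is a strictly commutative $\calO_S$-linear Picard stack over $S$, which is precisely the content of the corollary.

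One point worth highlighting (though not strictly needed for the bare statement of the corollary) is that $\gl(\calC)$ carries a priori two Picard stack structures: one coming from the construction above via \eqref{addition}, and another coming from viewing $\gl(\calC)$ as the kernel of the 2-group 1-homomorphism $p: T\GL(\calC) \to \GL(\calC)$ induced by $S \hookrightarrow I_S$. By Proposition \ref{same group structure}, these two structures agree canonically, so there is no ambiguity in the resulting Picard stack structure. There is no real obstacle here; the corollary is essentially a transcription of the general formalism once Condition (E) has been verified, and all the substantive work lies in Proposition \ref{cond(E)}.
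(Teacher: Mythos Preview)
Your proposal is correct and follows exactly the paper's (implicit) reasoning: the corollary is stated without proof in the paper because it is an immediate consequence of Proposition~\ref{cond(E)} together with the general fact, established in \S\ref{2-grps to Lie 2-algs}, that Condition~(E) for a stack $\calX$ endows $T_u\calX$ with the structure of a strictly commutative $\calO_S$-linear Picard stack over $S$. Your additional remark about Proposition~\ref{same group structure} is accurate but, as you note, not needed for the bare statement.
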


\begin{ex}If $\calC=\calQ coh(X)$ for $f:X\to S$ separated and quasi-compact over $S$, according to Theorem \ref{Aut}, as $\calO_S$-linear Picard stacks,
\[\gl(\calC)\cong T_{[\calO_X]}\calP ic_X\oplus \calD er_S(\calO_X).\]
In addition, one can easily see that $(T_{[\calO_X]}\calP
ic_X)^\flat\cong \tau_{\leq 0}Rf_*\calO_X[1]$ under the dictionary
in \S \ref{dic1}.
\end{ex}

We would like give a concrete description of the $\calO_S$-linear
Picard stack structure of $\gl(\calC)$. For simplicity, let us
assume that $S=\spec R$ is affine.

Let $z:R\to D=R[\varepsilon]/\varepsilon^2$ be the structural map,
and $p:D\to R$ defined by $p(\varepsilon)=0$. Then
$p_*:\calC_R\to\calC_D$ realize $\calC_R$ as a full subcategory of
$\calC_D$. Let $\xi\in\gl(\calC)(S)$. So $\xi$ is a $D$-linear
auto-equivalence of $\calC_D$, such that for any $(X,d_X)\in\calC_D$
(where we use the notation as in Example \ref{dual numbers}), if we
write $\xi(X,d_X)=(\xi(X),\xi(d_X))$, then there is a canonical
isomorphism $\on{coker}\xi(d_X)\cong\on{coker}d_X$. It is easy to
see that then there is a canonical isomorphism
$\xi|_{p_*(\calC_R)}\cong\on{Id}$. Without loss of any generality,
we could assume that $\xi|_{p_*(\calC_R)}=\on{Id}$.

Let us consider $z^*X$ for $X\in\calC_R$. Recall by definition
$z^*X$ is the object $X\oplus X$ in $\calC_R$ with the action
$d_{z^*X}=\begin{pmatrix}0&1\\0&0\end{pmatrix}$. We thus have a
short exact sequence in $\calC_D$
\[
0\to p_*X\to z^*X\to p_*X\to 0.
\]
Observe that if one views this exact sequence in $\calC_R$ (i.e. one
applies $z_*$ to it), it is just
\[
0\to X\stackrel{(0,\on{id})}{\to} X\oplus X\to X\to 0,
\]
and therefore splits. Now if one applies $z_*\circ\xi$ to it, we
therefore obtain an exact sequence in $\calC_R$
\[0\to X\to z_*\xi z^*X\to X\to 0\]
Observe that in general, this short exact sequence does not split
even in $\calC_R$.

Let us define a strictly commutative Picard stack
$\underline{\Ext}(\on{Id}_\calC,\on{Id}_\calC)$ as follows. Its
objects in $\underline{\Ext}(\on{Id}_\calC,\on{Id}_\calC)(\spec
R')$ are self extensions of the identity functor of $\calC_{R'}$,
i.e. rules that functorially assign every $X\in\calC_{R'}$ a
self-extension. Its morphisms are the isomorphisms of these
self-extensions. It is a strictly commutative Picard stack under the Baer sums.
What we just discussed above shows that there is
a 1-morphism of stacks
\[\Pi:\gl(\calC)\to\underline{\Ext}(\on{Id}_\calC,\on{Id}_\calC)\]
\begin{prop}$\Pi$ is naturally a 1-homomorphisms
of the Picard stacks.
\end{prop}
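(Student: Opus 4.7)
The plan is to produce, for $\spec A \to S$, any $\xi_1, \xi_2 \in \gl(\calC)(\spec A)$, and any $X \in \calC_A$, a canonical natural isomorphism $\mu_{\xi_1, \xi_2}(X): \Pi(\xi_1 + \xi_2)(X) \xrightarrow{\sim} \Pi(\xi_1)(X) +_{\mathrm{Baer}} \Pi(\xi_2)(X)$, and then verify that $\mu$ respects the associativity, commutativity, and unit constraints. The key idea is that $\Pi(\xi_1)$, $\Pi(\xi_2)$, and $\Pi(\xi_1 + \xi_2)$ all arise as pushouts of a single universal extension. By Proposition~\ref{cond(E)}, the pair $(\xi_1, \xi_2)$ corresponds via Condition~(E) to a $D_2$-linear auto-equivalence $\xi_{12} \in \GL_{D_2}(\calC_{D_2})$, with $D_2 = A[\varepsilon_1, \varepsilon_2]/(\varepsilon_1^2, \varepsilon_2^2, \varepsilon_1\varepsilon_2)$. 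Denote by $a_i : D_2 \to D$ the ring maps $\varepsilon_j \mapsto \delta_{ij}\varepsilon$ and by $a : D_2 \to D$ the codiagonal $\varepsilon_1, \varepsilon_2 \mapsto \varepsilon$. Then $a_i^* \xi_{12} \cong \xi_i$ by the construction of $\xi_{12}$, and $a^* \xi_{12} \cong \xi_1 + \xi_2$ by the definition~(\ref{addition}) of Picard addition on the tangent stack.

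Applying the exact functor $\xi_{12}$ to the tautological short exact sequence
\[
0 \to X\varepsilon_1 \oplus X\varepsilon_2 \to X \otimes_A D_2 \to X \to 0
\]
in $\calC_{D_2}$ (using the normalization $\xi_{12}|_{p_*\calC_A} = \on{Id}$) and then forgetting scalars down to $\calC_A$ yields a universal extension
\[
\widetilde{E}_X : \quad 0 \to X \oplus X \to \widetilde{E}_X \to X \to 0
\]
in $\calC_A$, whose sub $X \oplus X$ is the image of $X\varepsilon_1 \oplus X\varepsilon_2 \hookrightarrow \xi_{12}(X \otimes_A D_2)$. Lemma~\ref{f^*F=(f^*F)f^*} applied to $a_i$ identifies $\Pi(\xi_i)(X)$ with the pushout of $\widetilde{E}_X$ along the $i$-th projection $X \oplus X \to X$; hence $\widetilde{E}_X \cong \Delta^*(\Pi(\xi_1)(X) \oplus \Pi(\xi_2)(X))$. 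Similarly, Lemma~\ref{f^*F=(f^*F)f^*} applied to $a$ identifies $\Pi(\xi_1 + \xi_2)(X)$ with the quotient $\widetilde{E}_X / (\varepsilon_1 - \varepsilon_2)\widetilde{E}_X$. A direct calculation, tracing the $\varepsilon_i$-actions through the canonical surjection $\widetilde{E}_X \twoheadrightarrow X$ afforded by the normalization, shows that $(\varepsilon_1 - \varepsilon_2)\widetilde{E}_X$ is the antidiagonal of $X \oplus X \subset \widetilde{E}_X$; so this quotient equals $\nabla_*\widetilde{E}_X = \Pi(\xi_1)(X) +_{\mathrm{Baer}} \Pi(\xi_2)(X)$. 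Combining yields $\mu_{\xi_1, \xi_2}(X)$, natural in all arguments.

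Coherence is handled by similar symmetry arguments on auxiliary square-zero rings. Commutativity follows from the $\bbZ/2$-symmetry of $D_2$ swapping $\varepsilon_1, \varepsilon_2$, which intertwines the $\xi_{12}$ for $(\xi_1, \xi_2)$ with that for $(\xi_2, \xi_1)$ and matches the commutativity of the Baer sum; the unit is immediate since the zero tangent vector produces the split self-extension; associativity is verified by repeating the construction with $D_3 = A[\varepsilon_1, \varepsilon_2, \varepsilon_3]/(\varepsilon_i \varepsilon_j)_{i,j}$ and a triple auto-equivalence $\xi_{123}$ through which both parenthesizations of $\xi_1 + \xi_2 + \xi_3$ factor. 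The main obstacle is precisely this coherence bookkeeping: each axiom reduces to an elementary ring-level symmetry, but one must carefully compose the canonical isomorphisms supplied by Lemma~\ref{f^*F=(f^*F)f^*} with the tangent-vector normalization $p^*\xi_{12} \cong \on{Id}$ in order to match the signs and permutations appearing in the corresponding diagrams on the $\underline{\Ext}$ side.
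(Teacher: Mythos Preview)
Your proposal is correct and follows essentially the same route as the paper. Both arguments lift the pair $(\xi_1,\xi_2)$ through Condition~(E) to an auto-equivalence over $D_2=D\times_R D$ (the paper's $g=\tilde G((\xi,\eta))$, your $\xi_{12}$), apply it to $i^*X=X\otimes_A D_2$ to obtain an extension of $X$ by $X\oplus X$ which is identified with $\Delta^*(\Pi(\xi_1)(X)\oplus\Pi(\xi_2)(X))$, and then observe that pulling back along the codiagonal $a:D_2\to D$ yields the pushout along $+:X\oplus X\to X$, i.e.\ the Baer sum. The only cosmetic differences are that the paper phrases the universal extension via the explicit right adjoint $G$ from the proof of Proposition~\ref{cond(E)}, whereas you compute the $\varepsilon_i$-images directly; and you spell out the coherence checks (commutativity via the swap of $\varepsilon_1,\varepsilon_2$, associativity via $D_3$) that the paper leaves to the reader.
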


\begin{proof}We show that if $\xi,\eta\in\gl(\calC)(R')$, then there are functorial isomorphisms of short exact sequences in $\calC_{R'}$
\[0\to X\to z_*(\xi+\eta)z^*X\to X\to 0,\]
and the Baer sum of
\[0\to X\to z_*\xi z^*X\to X\to 0, \quad\quad 0\to X\to z_*\eta z^*X\to X\to 0.\]
We leave it to readers to verify the compatibility of these isomorphisms.

We could assume that $R'=R$.  Recall the functors $\tilde{F}$ and $F,G$ defined in the course of proof of Proposition \ref{cond(E)},
\[\tilde{F}:\GL_{D\times_RD}(\calC_{D\times_RD})\to\GL_D(\calC_D)\times_{\GL_R(\calC_R)}\GL_D(\calC_D),\]
\[\calC_{D\times_R D}\substack{G\\ \leftrightharpoons\\F}\calC_D\times_{\calC_R}\calC_D.\]
Fix a quasi-inverse of $\tilde{F}$, denoted by $\tilde{G}$. Let
$g=\tilde{G}((\xi,\eta))$. Then by tracking of the proof of
Proposition \ref{cond(E)}, we find that in $\calC_{D\times_R D}$
there is a canonical isomorphism
\[g(i^*X)\cong G((\xi z^*X,\eta z^*X)),\]
where $i: R\to D\times_R D$ is the structural map. In addition, let
us recall that $i_*G(\xi z^*X,\eta z^*X)$ fits into the following
pullback diagram in $\calC_R$,
\[\begin{CD}
0@>>>X\oplus X@>>>i_*G(\xi z^*X,\eta z^*X)@>>>X@>>>0\\
@.@|@VVV@VV\Delta V@.\\
0@>>>X\oplus X@>>>z_*\xi z^*X\oplus z_*\eta z^*X@>>>X\oplus X@>>>0.
\end{CD}\]
where $\Delta$ is the diagonal map. Now let $(+):D\times_RD\to D$ be
the addition map as definition in (\ref{addition}). Then
$\xi+\eta=(+)^*(g)\in\GL_D(\calC_D)$. Observe that $(+)\circ i=z$.
Then by Lemma \ref{f^*F=(f^*F)f^*},
\[(\xi+\eta)(z^*X)=(+)^*(g)(z^*X)\cong (+)^*(g)((+)^*i^*X)\cong (+)^*(g(i^*X)).\]
Observe that $z_*(+)^*(g(i^*X))\cong z_*(+)^* G(\xi z^*X,\eta z^*X)$
fits into the following push-out diagram
\[\begin{CD}
0@>>>X\oplus X@>>>i_*G(\xi z^*X,\eta z^*X)@>>>X@>>>0\\
@.@VV+V@VVV@|@.\\
0@>>>X@>>>z_*(+)^* G(\xi z^*X,\eta z^*X)@>>>X@>>>0.
\end{CD}\]
The proposition follows.
\end{proof}

A direct consequence of this proposition (together with Proposition
\ref{same group structure}) is that for
$\xi,\eta\in\gl(\calC)\subset\GL_D(\calC_D)$, the short exact
sequence
\[0\to X\to z_*(\xi\circ \eta)z^*X\to X\to 0\]
is canonically isomorphic to the Baer sum of
\[0\to X\to z_*\xi z^*X\to X\to 0, \quad\quad 0\to X\to z_*\eta z^*X\to X\to 0.\]

\begin{rmk}\label{compare}
Let us have a closer look at the relation between $\gl(\calC)$ and
$\underline{\Ext}(\on{Id}_\calC,\on{Id}_\calC)$. According to Lemma
\ref{res}, an object $g\in\GL_D(\calC_D)$ is uniquely determined by
its restriction to the full subcategory of $\calC_D$ consisting of
objects of the form $z^*X, X\in\calC_R$. Then one would expect that
an object $E\in\underline{\Ext}(\on{Id}_\calC,\on{Id}_\calC)$ would
determine an object $g_E\in\GL_D(\calC_D)$ by sending $z^*X$ to
$E(X)$, where $E(X)$ fits into the short exact sequence $0\to X\to
E(X)\to X\to 0$ determined by $E$ (observe that $E(X)$ is naturally
an object in $\calC_D$). However, this is not correct. The point is
that if $u$ a morphism in $\calC_D$ between $z^*X$ and $z^*Y$. Then
\[z_*u:z_*z^*X\cong X\oplus X\to z_*z^*Y\cong Y\oplus Y\]
is generally of the form $\begin{pmatrix}u_1&0\\
u_2&u_1\end{pmatrix}$. On the other hand, the sought-after
auto-equivalence $g_E$ could only reasonably be defined on those
morphisms from $z^*X$ to $z^*Y$ of the forms
$u=\begin{pmatrix}u_1&0\\ 0&u_1\end{pmatrix}$. Namely, it sends such
$u$ to $E(u_1):E(X)\to E(Y)$.
\end{rmk}

\medskip

Now we turn to show that $\gl(\calC)$ indeed is a Lie 2-algebra. We should
prove that
\begin{prop}\label{goodness}The group $\GL(\calC)$ is good.
\end{prop}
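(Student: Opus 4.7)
By Proposition \ref{cond(E)}, the stack $\GL(\calC)$ already satisfies Condition $(E)$, so what remains is to show that $\gl(\calC)$ is a good $\calO_S$-linear Picard stack. The question being fppf-local, I first reduce to $S=\spec R$ affine, and then the task is: for every free $R$-module $M$ of finite rank, show that the canonical 1-homomorphism
\[\gl(\calC)\otimes_R D_R(M)\to T\gl(\calC)(M)\]
is a 1-isomorphism.

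My first step would be to derive Condition $(E)$ for $\gl(\calC)$ itself from Condition $(E)$ for $\GL(\calC)$. Since $\gl(\calC)$ is the 2-group kernel of the projection $T\GL(\calC)\to\GL(\calC)$ and the assignment $\calX\mapsto T\calX(M)=\underline{\Hom}(I_S(M),\calX)$ commutes with fiber products, Condition $(E)$ for $\gl(\calC)$ will follow from Condition $(E)$ for $\GL(\calC)$ (Proposition \ref{cond(E)}) together with Condition $(E)$ for $T\GL(\calC)$, and the latter will come from Proposition \ref{cond(E)} applied with the base ring $R[\tau]/\tau^2$ replacing $R$ and the category $\calC_{R[\tau]/\tau^2}$ replacing $\calC$. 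With Condition $(E)$ for $\gl(\calC)$ at hand, both sides of the required isomorphism will be additive in $M$, and induction on the rank will reduce the problem to the case $M=R$.

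For $M=R$, the left-hand side is $\gl(\calC)\oplus\gl(\calC)$, while unwinding the definition, the right-hand side is
\[T\gl(\calC)(R)=\gl(\calC)(R[\delta]/\delta^2)=\ker\!\bigl(\GL(\calC)(R[\delta,\varepsilon]/(\delta^2,\varepsilon^2))\to\GL(\calC)(R[\delta]/\delta^2)\bigr),\]
where $\varepsilon$ is the ``Lie direction'' and $\delta$ the ``tangent direction''. My key observation is that the ring $R[\delta,\varepsilon]/(\delta^2,\varepsilon^2)$ surjects onto $D_R(R\oplus R)=R[\delta,\varepsilon]/(\delta^2,\varepsilon^2,\delta\varepsilon)$ with square-zero kernel $R\delta\varepsilon$. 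I plan to apply Proposition \ref{cond(E)} to the identification $D_R(R\oplus R)\cong R[\delta]/\delta^2\times_R R[\varepsilon]/\varepsilon^2$, and then analyze the square-zero extension by the same technique as in Lemma \ref{criterion} (whose hypothesis is verified via the adjunction $H=f^*f_*$ for $f\colon R\to R[\delta,\varepsilon]/(\delta^2,\varepsilon^2)$). This should produce a short exact sequence of Picard stacks
\[0\to\gl(\calC)\to T\gl(\calC)(R)\to\gl(\calC)\to 0,\]
in which the rightmost term records the $\varepsilon$-direction (setting $\delta=0$) and the leftmost the $\delta\varepsilon$-direction. A canonical splitting will be induced by the $R$-algebra inclusion $R[\varepsilon]/\varepsilon^2\hookrightarrow R[\delta,\varepsilon]/(\delta^2,\varepsilon^2)$ (i.e.\ ``$\delta=0$''), and the Picard structure on $\gl(\calC)$ will then assemble the two summands into an equivalence $\gl(\calC)\oplus\gl(\calC)\cong T\gl(\calC)(R)$.

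The hard part will be to verify that this equivalence agrees, as a 1-homomorphism of $\calO_S$-linear Picard stacks, with the canonical comparison map $\gl(\calC)\otimes_R D_R(R)\to T\gl(\calC)(R)$. One must trace through the identifications to check that the Picard additive structure and the $\calO_S$-module structure produced by the abstract tangent construction of \eqref{addition} coincide with those coming from the explicit splitting based on Proposition \ref{cond(E)}; in particular, one must confirm that the $\delta\varepsilon$-piece of $T\gl(\calC)(R)$ plays the role of the ``linear'' summand $\gl(\calC)\otimes_R(\delta\cdot R)$ in $\gl(\calC)\otimes_R D_R(R)$.
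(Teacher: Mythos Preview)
Your approach is headed in the right direction, and the ring-theoretic observations you make are essentially the ones the paper uses, but there is a gap in the central step and the paper organizes the argument more efficiently.

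The paper does not try to compute $T\gl(\calC)(R)$ directly. Instead it first isolates a general criterion: for any 2-group $\calG$ satisfying Condition~$(E)$, if the square
\[
\begin{CD}
T\calG @>q>> \underline{\Hom}(I_S\times_S I_S,\calG)\\
@VpVV @VjVV\\
\calG @>i>> \underline{\Hom}(I_S(\calO_S\oplus\calO_S),\calG)
\end{CD}
\]
is Cartesian, then $\calG$ is good. This comes from the ring identity $D\cong R\times_{D\times_RD}(D\otimes_RD)$, i.e.\ $R[\varepsilon]/\varepsilon^2$ is the fiber product of $R$ and $R[\varepsilon_1,\varepsilon_2]/(\varepsilon_1^2,\varepsilon_2^2)$ over $R[\varepsilon_1,\varepsilon_2]/(\varepsilon_1^2,\varepsilon_2^2,\varepsilon_1\varepsilon_2)$. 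Taking kernels of the vertical maps in the Cartesian square immediately yields $\frakg(M)\cong T\frakg(M)$, and Condition~$(E)$ gives $\frakg\otimes T\calO_S(M)\cong\frakg(M)$; the $\calO_S$-linear compatibility you worry about in your last paragraph is automatic because both identifications arise as kernels of $\calO_S$-linear maps. The paper then verifies the Cartesian square for $\GL(\calC)$ by proving the stronger statement that $F:\calC_D\to\calC_R\times_{\calC_{D\times_RD}}\calC_{D\otimes_RD}$ is an equivalence of \emph{categories}, via explicit computation of $GF$ and $FG$.

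Your gap is precisely this fiber product. You phrase the passage from $R[\delta,\varepsilon]/(\delta^2,\varepsilon^2)$ to $D_R(R\oplus R)$ as ``a square-zero extension'' to be handled ``by the same technique as in Lemma~\ref{criterion}'', but Lemma~\ref{criterion} is a statement about a fiber product $A'''=A'\times_A A''$, not about a single surjection. You have not specified which fiber product you mean, and the natural one is exactly the paper's $D\cong R\times_{D\times_RD}(D\otimes_RD)$. Moreover, the hypothesis $FG\to\on{Id}$ in Lemma~\ref{criterion} was only established (via the sub-lemmas following it) when both $A'\to A$ and $A''\to A$ are surjective; here $R\to D\times_RD$ is not surjective, so that sub-lemma does not apply, and one must compute $FG$ directly as the paper does. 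Without that, your ``short exact sequence'' $0\to\gl(\calC)\to T\gl(\calC)(R)\to\gl(\calC)\to 0$ is asserted but not established: the identification of the kernel with $\gl(\calC)$ via the $\delta\varepsilon$-direction is exactly the content of the Cartesian square.
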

\begin{proof}Let us first give a criterion of goodness for general 2-groups. Let $\calG$ be a
2-group over $S$. Observe that over the underline topological
space $\on{sp}(S)$ of $S$, we have the fiber product of sheaves of
$\calO_S$-algebras
\[\begin{CD}D_S(\calO_S))\cong\calO_S[\varepsilon]/\varepsilon^2@>q(\varepsilon)=\varepsilon_1\varepsilon_2>>D_S(\calO_S)\otimes_{\calO_S}D_S(\calO_S)\cong\calO_S[\varepsilon_1,\varepsilon_2]/(\varepsilon_1^2,\varepsilon_2^2)\\
@Vp(\varepsilon)=0VV@VVj(\varepsilon_1\varepsilon_2)=0V \\
\calO_S@>i>>D_S(\calO_S\oplus\calO_S)\cong\calO_S[\varepsilon_1,\varepsilon_2]/(\varepsilon_1^2,\varepsilon_2^2,\varepsilon_1\varepsilon_2),
\end{CD}\]
which gives rise to the following 1-commutative diagram
\[\begin{CD}
T\calG@>q>>\underline{\Hom}(I_S\times_S I_S,\calG)\\
@VVpV@VVjV\\
\calG@>i>>\underline{\Hom}(I_S(\calO_S\oplus\calO_S),\calG).
\end{CD}\]
\begin{lem}If $\calG$ satisfies the Condition (E), and the above 1-commutative diagram is Cartesian, then $\calG$ is
good.
\end{lem}
\begin{proof}Since $\calG$ satisfies the Condition (E), from the
above Cartesian diagram, one can easily obtain the following
Cartesian diagram for any free $\calO_S$-module $M$ of finite rank
\[\begin{CD}
\underline{\Hom}(I_S(M),\calG)@>>>\underline{\Hom}(I_S\times_S I_S(M),\calG)\\
@VVV@VVV\\
\calG@>>>\underline{\Hom}(I_S(\calO_S\oplus M),\calG).
\end{CD}\]
Indeed, assume that $M=\calO_S^r$. Then one readily checks that
the 1-morphism $\underline{\Hom}(I_S\times_S
I_S(M),\calG)\to\underline{\Hom}(I_S(\calO_S\oplus M),\calG)$ is
canonically isomorphic to the 1-morphism $(j\times\cdots\times j)$
from the $r$-folded product
\[\underline{\Hom}(I_S\times_SI_S,\calG)\times_{T\calG}\cdots\times_{T\calG}\underline{\Hom}(I_S\times_SI_S,\calG)\]
to the $r$-folded product
\[
\underline{\Hom}(I_S(\calO_S\oplus\calO_S),\calG)\times_{T\calG}\cdots\times_{T\calG}\underline{\Hom}(\calO_S\oplus\calO_S,\calG).
\]
The above Cartesian diagram follows easily.

Observe the kernel of the left column of the above commutative
diagram is $\frakg(M)$ and the kernel of the right column is
$T\frakg(M)$. Therefore, the canonical 1-homomorphism of
$\calO_S$-linear Picard stacks $\frakg(M)\to T\frakg(M)$ is a
1-isomorphism. Finally, observe that
$\frakg\otimes_{\calO_S}T\calO_S(M)\cong\frakg(M)$ since $\calG$
satisfies the Condition (E).
\end{proof}

By this lemma, the proof of this proposition will not be much
different from the proof of Proposition \ref{cond(E)}. Clearly, we
need only prove that if $\calC$ is an abelian category over
$S=\spec R$, then
\[
\GL_D(\calC_D)\stackrel{p^*\times q^*}{\to}\GL_{R}(\calC_{R})\times_{\GL_{D\times_RD}(\calC_{D\times_RD})}\GL_{D\otimes_R D}(\calC_{D\otimes_RD})
\]
is an isomorphism, where $D=R[\varepsilon]/(\varepsilon^2)$.

Recall that we have the pair of adjoint functors
\[
\calC_D\quad\substack{F\\ \rightleftharpoons\\G}\quad\calC_{R}\times_{\calC_{D\times_RD}}\calC_{D\otimes_RD,
}\]
where $F=p^*\times q^*$ and $G$ is the right adjoint of $F$. We
claim that the natural adjunctions $\on{Id}\to GF, FG\to\on{Id}$
are isomorphisms. This will imply the proposition

We first prove the isomorphism $\on{Id}\cong GF$. Indeed, this
follows easily from the fact that when $D\otimes_RD$ is regarded
as a $D$-module via the map $q$, we have $D\otimes_RD\cong D\oplus
R\oplus R$. Let $(X,d_X)$ be an object in $\calC_D$ (where we use
notation as in Example \ref{dual numbers}, so that $\varepsilon$
acts on $X$ via $d_X$), then the underlying objects of
$p^*(X,d_X), q^*(X,d_X)$ and $i^*p^*(X,d_X)\cong j^*q^*(X,d_X)$ in
$\calC_R$ are just $\on{coker}d_X, (X\oplus\on{coker}d_X\oplus
\on{coker}d_X)$ and $(\on{coker}d_X\oplus\on{coker}d_X\oplus
\on{coker}d_X)$ respectively. Therefore, the underlying object in
$\calC_R$ of $GF(X,d_X)$ fits into the following pullback diagram
\[\begin{CD}
GF(X,d_X)@>>>(X\oplus\on{coker}d_X\oplus \on{coker}d_X)\\
@VVV@VVV\\
\on{coker}d_X@>>>(\on{coker}d_X\oplus\on{coker}d_X\oplus
\on{coker}d_X).
\end{CD}\]
If one tracks carefully the morphisms, he will obtain that
$GF(X,d_X)\cong (X,d_X)$.

Next we prove that $FG\cong\on{Id}$. Let
$((X,\alpha),(Y,\beta),\varphi)\in\calC_{R}\times_{\calC_{D\times_RD}}\calC_{D\otimes_RD}$,
where notations are the same as in the proof of Theorem
\ref{cond(E)}. We have shown in Lemma \ref{criterion} (i) that
there is a natural isomorphism
\[p^*(X\times_ZY,\alpha\times\beta)\cong(X,\alpha),\]
and a natural surjective map
\[q^*(X\times_ZY,\alpha\times\beta)\twoheadrightarrow(Y,\beta),\]
since the map $j:D\otimes_RD\to D\times_RD$ is surjective with
square zero kernel. To finish the prove, we need to show that the
map $q^*(X\times_ZY,\alpha\times\beta)\twoheadrightarrow(Y,\beta)$
is an isomorphism. Let $(W,\delta)$ be its kernel. Then from the
properties of the pull-back, $(W,\delta\circ j)$ is the kernel of
the map $GFG((X,\alpha),(Y,\beta),\varphi)\to
G((X,\alpha),(Y,\beta),\varphi)$. But since $\on{Id}\cong GF$,
$W=0$.
\end{proof}

We thus obtain
\begin{thm}Let $\calC$ be a quasi-coherent sheaf of abelian categories over
$(\mathbf{Aff}/S)_{fppf}$. Then $\gl(\calC)$ is a pseudo Lie
2-algebra.
\end{thm}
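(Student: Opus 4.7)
The plan is to invoke, with essentially no further argument, the general machinery of \S\ref{2-grps to Lie 2-algs}. The summary theorem of that subsection produces, on the Lie stack $\frakg=\Lie(\calG)$ of any good stack of 2-groups $\calG$ over $S$, a pseudo Lie 2-algebra structure: the bracket $[-,-]$ is obtained by differentiating the adjoint 1-homomorphism $\on{Ad}:\calG\to\underline{\on{Aut}}_{\calO_S-\on{pic}}(\frakg)$ at the unit and invoking the canonical 1-isomorphism $\rho$ of Proposition \ref{LieGL=gl}, while the skew-symmetry 2-isomorphism $s$ and the Jacobiator $j$ are shown to satisfy axioms (i)--(iv) of Definition \ref{Lie 2-alg} by Propositions \ref{skew-symmetric}, \ref{cyc}, \ref{alt}, together with the combination of Propositions \ref{func2} and \ref{jac identity}.

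Consequently the theorem reduces to verifying that the stack of 2-groups $\GL(\calC)$ is good in the sense of Definition \ref{good}. This decomposes into two inputs: first, that $\GL(\calC)$ satisfies Condition (E) of Definition \ref{Cond(E)}; second, that the resulting $\calO_S$-linear Picard stack $\gl(\calC)=\Lie(\GL(\calC))$ is itself good as an $\calO_S$-linear Picard stack. The first input is exactly Proposition \ref{cond(E)}, and the second is exactly Proposition \ref{goodness}. Hence the proof is essentially a single sentence: combine these two propositions and apply the summary theorem of \S\ref{2-grps to Lie 2-algs}.

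The real substance therefore lies entirely in the two preceding propositions, both of which have already been carried out. The main obstacle in both is the same: one must control the behaviour of $R$-linear auto-equivalences of the base changes $\calC_{D\otimes_R D}$, $\calC_{D\times_R D}$, and of the various fibre products that arise in Condition (E) and in the Cartesian-square criterion for goodness, under pullback along square-zero surjections of $R$-algebras. The unifying technical device is the pair of adjoint functors $(F=p^*\times q^*,G)$ produced in the proof of Proposition \ref{cond(E)}, together with the criterion of Lemma \ref{criterion} and the surjectivity/fully-faithfulness properties established in the lemma immediately following it. Once this adjunction machinery is in place one gets both Condition (E) and the Cartesian diagram characterising goodness by the same pattern of argument, and the present theorem falls out as a purely formal consequence, with no additional calculation required.
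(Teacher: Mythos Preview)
Your proposal is correct and matches the paper's approach: the theorem is stated immediately after Proposition \ref{goodness} with the phrase ``We thus obtain'', so the paper's proof is precisely to combine goodness of $\GL(\calC)$ with the summary theorem of \S\ref{2-grps to Lie 2-algs}. One small misattribution: Proposition \ref{goodness} does not merely establish the second input (goodness of $\gl(\calC)$ as an $\calO_S$-linear Picard stack) but rather proves directly that the 2-group $\GL(\calC)$ is good in the sense of Definition \ref{good}, so it already subsumes Condition (E) from Proposition \ref{cond(E)}; the two propositions are not logically parallel inputs but rather the second builds on the first.
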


\medskip

\noindent\bf Question: \rm give an explicit description of the Lie
bracket of $\gl(\calC)$.

\begin{rmk}In \cite{LV}, the Hochschild cohomology of an abelian category is introduced. They associated every $k$-linear abelian category $\calA$ a complex $C_{\on{ab}}(\calC)$. While the whole complex has a structure as a $B_\infty$-algebra, the truncation $\tau_{\leq 0}(C_{\on{ab}}(\calA)[1])$ is has a natural strucutre as a 2-term $L_\infty$-algebra, and therefore corresponds to a Lie 2-algebra. It seems that this Lie 2-algebra coincides with $\gl(\calC)$, at least in some cases. For example, if $\calC$ is the category of quasi-coherent sheaves on a smooth quasi-projective scheme $X$ over a field $k$, then the first two Hochschild cohomology of $\calC$ are
\[HH^0(\calC)\cong HH^0(X)\cong H^0(X,\calO_X)\]
\[HH^1(\calC)\cong HH^1(X)\cong H^1(X,\calO_X)\oplus H^0(X,\calT_X)\]
On the other hand, we know that in this case $H^0(\gl(\calC))=H^1(X,\calO_X)\oplus H^0(X,\calT_X)$ and $H^{-1}(\gl(\calC))=H^0(X,\calO_X)$.
\end{rmk}

\section{Appendix: The dictionary}\label{dic} We have the following
dictionaries.

\subsection{Strictly commutative $\calR$-linear
Picard stack v.s. 2-term complex of $\calR$-modules}\label{Pic}

Let $\calT$ be a topos, and $\calR$ be a ring in $\calT$. If
$\calP_1,\calP_2$ are two (strictly commutative) Picard stack over
$\calT$, we use $\underline{\Hom}_{\on{pic}}(\calP_1,\calP_2)$ to
denote the (strictly commutative) Picard stack of 1-homorphisms
from $\calP_1$ to $\calP_2$ over $\calT$ (cf. \cite{Del} \S
1.4.7). Let us first give the following definitions.

\begin{dfn}\label{R-linear Pic}A strictly commutative $\calR$-linear Picard stack
consists of:

\begin{enumerate}
\item[(a)]A strictly commutative Picard stack $\calP$, together
with an action of $\calR$ on $\calP$. In particular, we obtain a
1-morphism of stacks $\calR\to\underline{\Hom}(\calP,\calP)$. We
can harmless assume that $1\in\calR$ will give the identity
1-morphism of $\calP$. For any $r,r'\in\calR$ and $x\in\calP$, let
us denote $a$ to be the canonical isomorphism
$a_{r,r'}:r(r'(x))\cong (rr')(x)$, which satisfies the usual
compatibility conditions.

\item[(b)] A 1-morphism of stacks
$\calR\to\underline{\Hom}_{\on{pic}}(\calP,\calP)$, such that the
composition
$\calR\to\underline{\Hom}_{\on{pic}}(\calP,\calP)\to\underline{\Hom}(\calP,\calP)$
is identical to the 1-morphism in (a). For any
$r\in\calR,x,y\in\calP$, let us denote $b$ to be the canonical
isomorphism $b:r(x+y)\cong rx+ry$, (which respects the
associativity and commutativity constraints).

\item[(c)]The 1-morphism
$\calR\to\underline{\Hom}_{\on{pic}}(\calP,\calP)$ is a
1-homomorphism of Picard stacks, where the Picard groupoid
structure of $\calR$ comes from its addition. For any
$r,r'\in\calR, x\in\calP$, let us denote $c$ to be the canonical
isomorphism $c:(r+r')x\cong rx+r'x$, (which respects the
associativity and commutativity constraints).
\end{enumerate}

These data should satisfy the following compatibility conditions.

\begin{enumerate}
\item[(i)]The following diagrams commutes:
\[
\xymatrix{r(r'(x+y))\ar^{r(b)}[r]\ar^{a}[d]&r(r'x+r'y)\ar^{b}[r]&r(r'x)+r(r'y)\ar^{a}[d]\\
(rr')(x+y)\ar^{b}[rr]&& (rr')x+(rr')y.}
\]

\item[(ii)]The following diagram commutes:
\[
\xymatrix{
&(r+r')(x+y)\ar^{b}[dr]\ar_{c}[dl]&\\
r(x+y)+r'(x+y)\ar_{b}[d]&&(r+r')x+(r+r')y\ar^{c}[d]\\
(rx+ry)+(r'x+r'y)\ar^{\cong}[rr]&&(rx+r'x)+(ry+r'y). }
\]

\item[(iii)]The following diagram commutes:
\[
\xymatrix{
(r+r')(r''x)\ar^{c}[rr]\ar^{a}[d]&&r(r''x)+r'(r''x)\ar^{a}[d]\\
((r+r')r'')x\ar^{c}[rr]&&(rr'')x+(r'r'')x. }
\]

\item[(iv)]The following diagram commutes:
\[
\xymatrix{
r((r'+r'')x)\ar^{r(c)}[r]\ar^{a}[d]&r(r'x+r''x)\ar^{b}[r]&r(r'x)+r(r''x)\ar^{a}[d]\\
(r(r'+r''))x\ar^{c}[rr]&&(rr')x+(rr'')x. }
\]
\end{enumerate}
\end{dfn}

The definition is complicated. The point is it guarantees all the
other compatibility conditions in the spirit of Mac Lane's "coherence theorem".
The strictly commutative $\calR$-linear Picard stacks form a
2-category, whose morphisms we are going to define next.

\begin{dfn}\label{hom}
Let $\calP_1,\calP_2$ be two strictly commutative $\calR$-linear
Picard stacks over $\calT$. Then an $\calR$-linear 1-homomorphism
from $\calP_1$ to $\calP_2$ consists of a 1-homomorphism of Picard
stacks $F:\calP_1\to\calP_2$ and a 2-isomorphism $\tau$ of the
following 1-commutative diagram
\[
\xymatrix{\calR\times\calP\ar^{\on{act}}[rr]\ar_{\on{Id}\times F}[d]\ar^{\tau}@{==>}[drr]&&\calP\ar^{F}[d]\\
\calR\times\calP\ar^{\on{act}}[rr]&&\calP,}
\]
such that $\tau$ is compatible with those $a,b,c$'s. All the $\calR$-linear
1-homomorphisms from $\calP_1$ to $\calP_2$ form a groupoid,
denoted by $\Hom_{\calR-\on{pic}}(\calP_1,\calP_2)$, where the
2-morphisms between $(F,\tau),(F',\tau')$ are are natural
isomorphisms of $F$ and $F'$ compatible with $\tau,\tau'$. In
addition, one defines the stack
\[
\underline{\Hom}_{\calR-\on{pic}}(\calP_1,\calP_2),
\]
whose groupoid over $U\in\calT$ is
$\Hom_{\calR_U-\on{pic}}((\calP_1)_U,(\calP_2)_U)$.
\end{dfn}

It is clear that
$\underline{\Hom}_{\calR-\on{pic}}(\calP_1,\calP_2)$ indeed has a
natural structure as $\calR$-linear Picard stack over $\calT$.
Namely, if
$F_1,F_2\in\underline{\Hom}_{\calR-\on{pic}}(\calP_1,\calP_2)$,
then $(F_1+F_2)(x):=F_1(x)+F_2(x)$ and $(rF_1)(x):=r(F(x))$ for
any $x\in\calP_1$. All the constraints are clear.

\medskip

Let us state the following proposition, which is a generalization
of Corollary 1.4.17 of \cite{Del}. The proof is the same as the
original proof. Let
$\calK^\bullet:=\calK^{-1}\stackrel{d}{\to}\calK^0$ be a 2-term
complex of sheaves of $\calR$-modules. Then one can define a
strictly commutative $\calR$-linear Picard pre-stack
$\on{pch}(\calK^\bullet)$ as follows. For any $U\in\calT$, the
objects of $\on{pch}(\calK^\bullet)(U)$ sections $U\to\calK^0$,
and the morphisms from $x$ to $y$ for $x,y\in\calK^0(U)$ are the
sections $t\in\calK^{-1}(U)$ such that $d(t)=y-x$. The
$\calR$-linear structure is clear. Let $\on{ch}(\calK^\bullet)$
denote its stackification. Then $\on{ch}(\calK^\bullet)$ is a
strictly commutative $\calR$-linear Picard stack. Let
$\tilde{C}^{[-1,0]}(\calR\Mod)$ denote the 2-category whose
objects are two term complexes of $\calR$-modules
$\calK^{-1}\to\calK^0$ with $\calK^{-1}$ injective, 1-morphisms
are chain maps and 2-morphisms are homotopies between chain maps.
\begin{prop}\label{dic1}The assignment $\calK^\bullet\to\on{ch}(\calK^\bullet)$
can be upgraded two a functor from the 2-category
$\tilde{C}^{[-1,0]}(\calR\Mod)$ to the 2-category of strictly
commutative $\calR$-linear Picard stacks. Furthermore, the functor
gives rise to an equivalence of 2-categories.
\end{prop}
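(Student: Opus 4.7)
The plan is to imitate Deligne's proof of Corollary 1.4.17 in \cite{Del}, enhancing every construction to carry a compatible $\calR$-module structure. First I would check that $\calK^\bullet\mapsto\on{ch}(\calK^\bullet)$ upgrades to a 2-functor: a chain map $f^\bullet:\calK^\bullet\to\calL^\bullet$ induces the 1-homomorphism $\on{ch}(f^\bullet)$ that acts as $f^0$ on objects of the prestack $\on{pch}(\calK^\bullet)$ and as $f^{-1}$ on morphisms, followed by stackification; a homotopy $h:f^\bullet\Rightarrow g^\bullet$ (that is, a map $h:\calK^0\to\calL^{-1}$ with $dh=g^0-f^0$) produces a 2-morphism whose component at $x\in\calK^0$ is $h(x):f^0(x)\to g^0(x)$. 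The $\calR$-linearity constraint $\tau$ of Definition \ref{hom} and its compatibility with the constraints $a,b,c$ of Definition \ref{R-linear Pic} follow directly from the $\calR$-linearity of $f^{-1}$ and $f^0$.

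Next I would construct a quasi-inverse 2-functor. Given a strictly commutative $\calR$-linear Picard stack $\calP$, choose a sheaf of $\calR$-modules $\calK^0$ together with an $\calR$-linear 1-homomorphism $\calK^0\to\calP$ that is locally essentially surjective on objects; for instance $\calK^0$ may be the free sheaf of $\calR$-modules generated by a local presentation of objects in $\calP$. Let $\calK^{-1}$ record the ``relations'', defined so that the resulting complex has $H^0\cong\pi_0(\calP)$ and $H^{-1}\cong\pi_1(\calP)$. Then embed $\calK^{-1}$ into an injective $\calR$-module $\tilde{\calK}^{-1}$ and enlarge $\calK^0$ to the pushout $\calK^0\oplus_{\calK^{-1}}\tilde{\calK}^{-1}$; the result is an object of $\tilde{C}^{[-1,0]}(\calR\Mod)$ whose associated Picard stack is canonically equivalent to $\calP$ as an $\calR$-linear Picard stack.

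To upgrade this into an equivalence of 2-categories it remains to verify fully faithfulness on Hom-groupoids: for complexes $\calK^\bullet,\calL^\bullet$ with $\calL^{-1}$ injective, the natural functor
\[
\Hom^{\bullet}(\calK^\bullet,\calL^\bullet)\longrightarrow\Hom_{\calR-\on{pic}}(\on{ch}(\calK^\bullet),\on{ch}(\calL^\bullet))
\]
from the Picard groupoid of chain maps and homotopies to the Picard groupoid of $\calR$-linear 1-homomorphisms and 2-isomorphisms is an equivalence. Injectivity of $\calL^{-1}$ is used twice: to lift any 1-homomorphism defined on the stackification $\on{ch}(\calK^\bullet)$ to an honest chain map, and to lift any 2-isomorphism of 1-homomorphisms to a homotopy between the corresponding chain maps. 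Together with the essential surjectivity established above, this yields the 2-equivalence.

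The main obstacle, as in the non-linear case, lies in the bookkeeping of the $\calR$-linear enhancement: one must verify that each natural transformation produced by the construction is compatible with the constraints $a,b,c$ of Definition \ref{R-linear Pic} and with their higher compatibilities (i)--(iv). These verifications are essentially tautological, since the axioms (i)--(iv) are designed to encode precisely the $\calR$-module axioms for $\calK^0$ and $\calK^{-1}$, but they are tedious; once the injectivity of $\calL^{-1}$ is arranged, the lifting statements at the level of Hom-groupoids reduce to standard homological algebra.
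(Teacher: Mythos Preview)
Your proposal is correct and matches the paper's approach exactly: the paper does not supply its own proof but simply states that ``the proof is the same as the original proof'' of Deligne's Corollary 1.4.17, with the $\calR$-linear structure carried along. Your outline---upgrading $\on{ch}$ to a 2-functor, building a quasi-inverse by presenting $\calP$ via a surjection from a sheaf of $\calR$-modules and then making the kernel term injective, and checking full faithfulness on Hom-groupoids using injectivity of $\calL^{-1}$---is precisely that argument.
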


Let us fix an inverse functor $()^\flat$ of the above equivalence.
So for $\calP$ a strictly commutative $\calR$-Picard stack, we
have a 2-term complex of $\calR$-modules
$\calP^\flat:=\calK^{-1}\to\calK^0$.

As in \cite{Del} \S 1.4.18, and \S 1.4.19 one has
\begin{prop}$(\underline{\Hom}_{\calR-\on{pic}}(\calP_1,\calP_2))^\flat\cong\tau_{\leq
0}R\underline{\Hom}(\calP_1^\flat,\calP_2^\flat)$.
\end{prop}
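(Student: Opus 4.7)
The plan is to bootstrap from the non-linear version (Proposition 1.4.18 of \cite{Del}) already invoked just above in the paper, which gives $(\underline{\Hom}_{\on{pic}}(\calP_1,\calP_2))^\flat \cong \tau_{\leq 0}R\underline{\Hom}(\calP_1^\flat,\calP_2^\flat)$ as 2-term complexes of abelian sheaves, and to upgrade this to an $\calR$-module statement by matching the $\calR$-linear structures on both sides.

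First I would make the right-hand side concrete. Since we chose $()^\flat$ with values in $\tilde{C}^{[-1,0]}(\calR\Mod)$ (so $(\calP_i^\flat)^{-1}$ is injective), the derived internal Hom $R\underline{\Hom}_\calR(\calP_1^\flat,\calP_2^\flat)$ is computed by the na\"\i ve $\calR$-module Hom complex. Its $\tau_{\leq 0}$ therefore has, in degree $0$, the $\calR$-linear chain maps $\calP_1^\flat\to\calP_2^\flat$, and in degree $-1$, the $\calR$-linear homotopies $(\calP_1^\flat)^0\to(\calP_2^\flat)^{-1}$. On the left-hand side, an object of $\underline{\Hom}_{\calR-\on{pic}}(\calP_1,\calP_2)$ is a pair $(F,\tau)$, where $F$ is a 1-homomorphism of Picard stacks and $\tau$ is the $\calR$-linearity constraint satisfying the four coherence diagrams of Definition \ref{hom}; a 2-morphism is required to be compatible with the $\tau$'s.

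Next I would translate via Proposition \ref{dic1}: the underlying $F$ corresponds to a $\bbZ$-linear chain map $(f^{-1},f^0)$, and I would show that specifying a constraint $\tau$ satisfying conditions (i)--(iv) of Definition \ref{R-linear Pic} is equivalent to the statement that $f^{-1}$ and $f^0$ commute with the $\calR$-action, i.e.\ form an $\calR$-linear chain map. Similarly, a 2-morphism between $(F,\tau)$ and $(F',\tau')$ compatible with $\tau,\tau'$ corresponds to an $\calR$-linear homotopy. This yields a natural 1-morphism of $\calR$-linear Picard stacks
\[
\underline{\Hom}_{\calR-\on{pic}}(\calP_1,\calP_2)\longrightarrow \on{ch}\bigl(\tau_{\leq 0}\underline{\Hom}_\calR^\bullet(\calP_1^\flat,\calP_2^\flat)\bigr),
\]
which I would check is an equivalence of the underlying Picard stacks by reducing to Deligne's non-linear result after forgetting the $\calR$-structure.

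The main obstacle is the coherence bookkeeping in the second step: one must show that from any $\calR$-linear chain map $(f^{-1},f^0)$ a unique constraint $\tau$ satisfying (i)--(iv) can be reconstructed, and that all higher coherence conditions packaged in Definition \ref{R-linear Pic} then follow automatically. This is a MacLane-type coherence argument in the enriched setting of $\calR$-modules; once it is in place, everything else is a direct transcription of Deligne's proof, now carried out in the category of sheaves of $\calR$-modules rather than abelian sheaves.
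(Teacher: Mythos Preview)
The paper gives no proof of this proposition; it simply writes ``As in \cite{Del} \S 1.4.18, and \S 1.4.19 one has'' and states the result. The intended argument is therefore to repeat Deligne's proof verbatim in the category of $\calR$-modules rather than abelian sheaves, which is exactly what Proposition~\ref{dic1} has already set up.

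Your approach is correct but takes a detour. You propose to use Deligne's abelian-sheaf result as a black box and then separately match the $\calR$-linear structures, identifying the datum $\tau$ with $\calR$-linearity of the underlying chain map. This works, but the ``main obstacle'' you flag --- the coherence bookkeeping showing that $\tau$ corresponds precisely to $\calR$-linearity of $(f^{-1},f^0)$ --- is already absorbed into Proposition~\ref{dic1}. That proposition is stated as an equivalence of 2-categories between $\tilde{C}^{[-1,0]}(\calR\Mod)$ and strictly commutative $\calR$-linear Picard stacks, so its 1-morphisms ($\calR$-linear chain maps) already correspond to $\calR$-linear 1-homomorphisms $(F,\tau)$, and its 2-morphisms ($\calR$-linear homotopies) to the 2-morphisms compatible with $\tau$. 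Invoking this directly, one immediately gets $\underline{\Hom}_{\calR-\on{pic}}(\calP_1,\calP_2)\cong\on{ch}(\tau_{\leq 0}\underline{\Hom}^\bullet_\calR(\calP_1^\flat,\calP_2^\flat))$, and it only remains to observe (as you do) that the injectivity condition on $(\calP_2^\flat)^{-1}$ makes the na\"{\i}ve Hom complex compute $\tau_{\leq 0}R\underline{\Hom}$. So your route and the paper's implied route converge, but yours reproves a piece of Proposition~\ref{dic1} along the way.
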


\begin{prop} Let $f:(\calT_1,\calR_1)\to (\calT_2,\calR_2)$ be a
morphism of ringed topoi. Then for a strictly commutative
$\calR_1$-linear Picard stack $\calP$,
\[
(f_*\calP)^\flat\cong \tau_{\leq 0}Rf_*(\calP^\flat).
\]
\end{prop}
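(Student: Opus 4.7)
The proof mirrors Deligne's argument for Proposition 1.4.18 in \cite{Del}. The plan is as follows. Using Proposition \ref{dic1}, I would fix a representative $\calP^\flat = [\calK^{-1} \stackrel{d}{\to} \calK^0]$ with $\calK^{-1}$ an injective $\calR_1$-module, so that $\calP \simeq \on{ch}(\calK^\bullet)$. Choose an injective resolution $\calK^0 \hookrightarrow \calJ^0 \to \calJ^1 \to \cdots$; the total complex $[\calK^{-1} \to \calJ^0 \to \calJ^1 \to \cdots]$ placed in degrees $-1, 0, 1, \ldots$ is a complex of injective $\calR_1$-modules quasi-isomorphic to $\calK^\bullet$, hence computes $Rf_*\calP^\flat$ upon applying $f_*$. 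Left exactness of $f_*$ applied to $0 \to \calK^0 \to \calJ^0 \to \calJ^1$ yields $\ker(f_*\calJ^0 \to f_*\calJ^1) = f_*\calK^0$, so after truncation
\[
\tau_{\leq 0} Rf_*\calP^\flat \ \simeq \ [f_*\calK^{-1} \to f_*\calK^0]
\]
in degrees $-1, 0$.

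Next, I would construct a natural 1-homomorphism of Picard stacks $\Phi:\on{ch}([f_*\calK^{-1} \to f_*\calK^0]) \to f_*\calP$ on $\calT_2$. At the level of Picard prestacks, sections over $V \in \calT_2$ of $f_* \on{pch}(\calK^\bullet)$ coincide tautologically with those of $\on{pch}(f_*\calK^\bullet)$, since $(f_*\calK^i)(V) = \calK^i(f^{-1}V)$ by definition. Composing with the stackification map $\on{pch}(\calK^\bullet) \to \on{ch}(\calK^\bullet) = \calP$, pushing forward, and using that $f_*$ sends stacks to stacks, the universal property of stackification then yields $\Phi$.

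To conclude, I would verify $\Phi$ is an equivalence of Picard stacks by checking it induces isomorphisms on $\pi_0$ and $\pi_1$. For $\pi_1$: both sides give $f_*\ker d$, since automorphisms of the unit of $f_*\calP$ over $V$ equal $(\ker d)(f^{-1}V)$. For $\pi_0$: the sheafification of $V \mapsto \pi_0(\calP(f^{-1}V)) = \bbH^0(f^{-1}V, \calK^\bullet)$ equals $\calH^0(Rf_*\calK^\bullet)$, which by the hypercohomology spectral sequence and the vanishing $R^{\geq 1} f_*\calK^{-1} = 0$ (from injectivity of $\calK^{-1}$) is $\on{coker}(f_*d) = \pi_0 \on{ch}([f_*\calK^{-1} \to f_*\calK^0])$. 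By the equivalence in Proposition \ref{dic1}, a 1-homomorphism of Picard stacks that induces isomorphisms on $\pi_0, \pi_1$ is itself an equivalence, so applying $(-)^\flat$ to $\Phi$ gives the claimed isomorphism; $\calR_2$-linearity is automatic from functoriality.

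The main obstacle will be the final step, specifically identifying the sheafification of the presheaf $V \mapsto \bbH^0(f^{-1}V, \calK^\bullet)$ with $\calH^0(Rf_*\calK^\bullet)$ in a way compatible with the definition of $\Phi$. This amounts to reconciling two a priori incompatible operations—stackification (a left adjoint) and $f_*$ (a right adjoint)—and requires careful bookkeeping of the Leray-type spectral sequence alongside the universal property used to define $\Phi$. Once this reconciliation is done, promoting the induced maps on $\pi_0, \pi_1$ to a genuine equivalence of Picard stacks is automatic from Proposition \ref{dic1}.
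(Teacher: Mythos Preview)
The paper does not supply its own proof of this proposition; it simply records it with the preface ``As in \cite{Del} \S 1.4.18, and \S 1.4.19,'' i.e.\ it defers entirely to Deligne's argument. Your proposal is precisely an unpacking of that argument (choose an injective model for $\calP^\flat$, identify $\tau_{\leq 0}Rf_*$ via left exactness, build the comparison map from the prestack level, and check $\pi_0,\pi_1$), so it is correct and coincides with the paper's intended approach.
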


\begin{dfn}Let $\calP_1,\calP_2,\calP$ are three strictly commutative
$\calR$-linear Picard stack. An $\calR$-bilinear 1-homomorphism is
an $\calR$-linear 1-homomorphism
\[
F:\calP_1\to\underline{\Hom}_{\calR-\on{pic}}(\calP_2,\calP).
\]
$\calR$-bilinear 1-homomorphisms from a strictly commutative
$\calR$-linear Picard stacks, denoted by
$\underline{\Hom}_{\calR-\on{pic}}(\calP_1,\calP_2;\calP)$.

A $d$-uple $\calR$-linear 1-homomorphisms from
$\calP_1\times\cdots\times\calP_d$ to $\calP$ is defined by
induction as an $\calR$-linear 1-homomorphism
\[
F:\calP_1\to\underline{\Hom}_{\calR-\on{pic}}(\calP_2,\ldots,\calP_d;\calP).
\]
The $d$-uple $\calR$-linear 1-homomorphisms from
$\calP_1\times\cdots\times\calP_d$ to $\calP$ form a strictly
commutative $\calR$-linear Picard stack, denoted by
$\underline{\Hom}_{\calR-\on{pic}}(\calP_1,\calP_2,\ldots,\calP_d;\calP)$.
\end{dfn}

\begin{rmk}Another equivalent way to define the $\calR$-bilinear
1-homomorphism is: a 1-morphism
\[
F:\calP_1\times\calP_2\to\calP,
\]
and for any $x\in\calP_1, y\in\calP_2$, an $\calR$-linear
1-homomorphism structure on $F(x,-),F(-,y)$, such that for any
$x,y\in\calP_1,z,w\in\calP_2, r\in\calR$, the following diagrams
commute
\[
\xymatrix{
F(rx,z+w)\ar[r]\ar[d]&F(rx,z)+F(rx,w)\ar[d]& F(x+y,rz)\ar[r]\ar[d]&F(x,rz)+F(y,rw)\ar[d]\\
rF(x,z+w)\ar[r]&rF(x,z)+rF(x,w)& rF(x+y,z)\ar[r]& rF(x,z)+rF(y,z),}
\]
\[
\xymatrix{
F(x+y,z)+F(x+y,w)\ar[r]&(F(x,z)+F(y,z))+(F(x,w)+F(y,w))\ar^{\cong}[dd]\\
F(x+y,z+w)\ar[d]\ar[u]&\\
F(x,z+w)+F(y,z+w)\ar[r]& (F(x,z)+F(x,w))+(F(y,z)+F(y,w)).}
\]

Therefore, an $\calR$-bilinear 1-homomorphism will also give an
$\calR$-linear 1-homomorphism
\[
F':\calP_2\to\underline{\Hom}_{\calR-\on{pic}}(\calP_1,\calP)
\]
and vise versa.
\end{rmk}

As in \cite{Del} 1.4.20, we have

\begin{prop}Let $\calP_1,\calP_2$ be two strictly commutative
$\calR$-linear Picard stacks. Then there is a strictly commutative
$\calR$-linear Picard stack, denoted by
$\calP_1\otimes_\calR\calP_2$, together with a $\calR$-bilinear
1-homomorphism
\[
\otimes:\calP_1\times\calP_2\to\calP_1\otimes_\calR\calP_2,
\]
such that for any $\calR$-bilinear 1-homomorphism
$F:\calP_1\times\calP_2\to\calP$, there is a unique up to a unique
isomorphism pair $(F',\varepsilon)$, where $F'$ is an
$\calR$-linear 1-homomorphism
$F':\calP_1\otimes_\calR\calP_2\to\calP$ and $\varepsilon$ is a
2-isomorphism $F'(-\otimes-)\cong F$. Such pair
$(\calP_1\otimes_\calR\calP_2,\otimes)$ is unique up to a unique
1-isomorphism (up to isomorphisms). Indeed, one has
\[
(\calP_1\otimes_{\calR}\calP_2)^\flat\cong \tau_{\geq -1}(\calP_1^\flat\otimes^{\bbL}_{\calR}\calP_2^\flat).
\]
\end{prop}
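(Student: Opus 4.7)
The approach is to transport the problem through the equivalence of 2-categories of Proposition \ref{dic1} and take the stated formula as the \emph{definition}. That is, pick representatives $\calP_1^\flat$ and $\calP_2^\flat$ (with the $-1$ term injective), form the derived tensor product in $D(\calR)$ by first replacing one factor with a flat resolution, truncate via $\tau_{\geq -1}$ so as to land in the $[-1,0]$ range, and finally adjust the resulting $-1$ term to be injective, yielding a well-defined object $T \in \tilde C^{[-1,0]}(\calR\Mod)$. I then set $\calP_1\otimes_\calR\calP_2 := \on{ch}(T)$. The tautological bilinear map of complexes $\calP_1^\flat\otimes_\calR\calP_2^\flat \to \tau_{\geq -1}(\calP_1^\flat\otimes^{\bbL}_\calR\calP_2^\flat)$ stackifies, via $\on{ch}$, to the desired $\calR$-bilinear 1-homomorphism $\otimes:\calP_1\times\calP_2\to\calP_1\otimes_\calR\calP_2$.

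To verify the universal property, I would use the equivalent reformulation of an $\calR$-bilinear 1-homomorphism $\calP_1\times\calP_2\to\calP$ as an $\calR$-linear 1-homomorphism $\calP_1\to\underline{\Hom}_{\calR-\on{pic}}(\calP_2,\calP)$ noted in the remark preceding the proposition, giving
\[
\underline{\Hom}_{\calR-\on{pic}}(\calP_1,\calP_2;\calP)\;\simeq\;\underline{\Hom}_{\calR-\on{pic}}\bigl(\calP_1,\underline{\Hom}_{\calR-\on{pic}}(\calP_2,\calP)\bigr).
\]
By the earlier stated formula $(\underline{\Hom}_{\calR-\on{pic}}(\calP_2,\calP))^\flat\cong\tau_{\leq 0}R\underline{\Hom}(\calP_2^\flat,\calP^\flat)$, applied twice, the right-hand side has $\flat$ equal to $\tau_{\leq 0}R\underline{\Hom}(\calP_1^\flat,\tau_{\leq 0}R\underline{\Hom}(\calP_2^\flat,\calP^\flat))$, which by the standard derived tensor-hom adjunction agrees with $\tau_{\leq 0}R\underline{\Hom}(\calP_1^\flat\otimes^{\bbL}_\calR\calP_2^\flat,\calP^\flat)$. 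Because $\calP^\flat$ lives in degrees $[-1,0]$, any morphism from a complex into $\calP^\flat$ in $D(\calR)$ factors uniquely through its truncation $\tau_{\geq -1}$, so this is isomorphic to $\tau_{\leq 0}R\underline{\Hom}(\tau_{\geq -1}(\calP_1^\flat\otimes^{\bbL}_\calR\calP_2^\flat),\calP^\flat)\cong\underline{\Hom}_{\calR-\on{pic}}(\calP_1\otimes_\calR\calP_2,\calP)^\flat$. Translating back via $\on{ch}$ gives the desired equivalence of Picard stacks, in which the distinguished object $\otimes$ on the left corresponds to the identity on the right; this is exactly the universal property.

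The main obstacle is not the objectwise bijection, which is a clean derived tensor-hom adjunction, but rather lifting it to an equivalence of \emph{Picard stacks} that is $\calR$-linear and compatible with base change over $\calT$. Concretely, one must verify that the $\calR$-linear coherence data $(a,b,c)$ of Definition \ref{R-linear Pic} arising on $\underline{\Hom}_{\calR-\on{pic}}(\calP_1\otimes_\calR\calP_2,\calP)$ from the construction match, under the adjunction isomorphism, the bilinear coherence data described in the remark following Definition \ref{hom}; equivalently, one must check that the adjunction of Proposition 1.4.20 of \cite{Del} is genuinely $\calR$-linear and not merely $\calR$-linear at the level of objects. This is a bookkeeping argument parallel to Deligne's but slightly more involved because of the $\calR$-action; it reduces, via Proposition \ref{dic1}, to standard functoriality of $\tau_{\leq 0}$, $\tau_{\geq -1}$, and the derived tensor-hom adjunction in the derived category of $\calR$-modules over the topos $\calT$, which is classical. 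The uniqueness up to a unique 1-isomorphism of the pair $(\calP_1\otimes_\calR\calP_2,\otimes)$ then follows from the universal property in the standard way.
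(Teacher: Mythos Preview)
Your approach is correct and is essentially the argument the paper has in mind: the paper does not actually supply a proof of this proposition, but states it ``as in \cite{Del} 1.4.20'' and moves on. Your sketch --- define $\calP_1\otimes_\calR\calP_2$ via the formula, then verify the universal property by transporting the derived tensor-hom adjunction through the equivalence $\on{ch}/(\,)^\flat$ of Proposition~\ref{dic1} --- is exactly Deligne's argument in \emph{loc.\ cit.}, adapted to the $\calR$-linear setting. The one point where you go beyond Deligne, and which you correctly flag as the main obstacle, is the compatibility of the adjunction with the $\calR$-linear coherence data $(a,b,c)$; the paper simply absorbs this into the phrase ``as in \cite{Del}'', so your write-up is already more detailed than the paper's.
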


\subsection{Lie 2-algebras v.s 2-term $L_\infty$-algebras}
\begin{dfn}\label{Lie 2-alg}A pseudo-Lie 2-algebra $\frakg$ over the ringed topos $(\calT,\calR)$ consists of:

(a) a strictly commutative $\calR$-linear Picard stack $\frakg$;

(b) an $\calR$-bilinear 1-homomorphism, called the Lie bracket
\[
[-,-]:\frakg\times\frakg\to\frakg.
\]

(c) A 2-isomorphism $s$ of the two $\calR$-bilinear
1-homomorphisms $\frakg\times\frakg\to\frakg$
\[
s_{x,y}:[x,y]\cong-[y,x] \quad\quad x,y\in\frakg.
\]

(d) A 2-isomorphism $j$ of the two $\calR$-trilinear
1-homomorphisms $\frakg\times\frakg\times\frakg\to\frakg$
\[j_{x,y,z}:[[x,y],z]+[[y,z],x]+[[z,x],y]\cong 0.\]
These data should satisfy the following compatibility conditions.

(i) The following diagram commutes
\[
\xymatrix{[x,y]\ar^{s_{x,y}}[r]\ar@{=}[d]&-[y,x]\ar^{-s_{y,x}}[d]\\
[x,y]&-(-[x,y])\ar^{a_{-1,-1}}[l], }\]
where $a_{r,r'}:r\circ r'\cong rr', r,r'\in\calR$ is as in Definition \ref{R-linear Pic}
(a).

(ii) The following diagram commutes
\[
\xymatrix{[[x,y],z]+[[y,z],x]+[[z,x],y]\ar^(.75){j_{x,y,z}}[rr]\ar^{\cong}[d]&& 0\ar@{=}[d]\\
[[y,z],x]+[[z,x],y]+[[x,y],z]\ar^(.75){j_{y,z,x}}[rr]&& 0. }
\]

(iii) The following diagram commutes
\[
\xymatrix{[[x,y],z]+[[y,z],x]+[[z,x],y]\ar^(.75){j_{x,y,z}}[rr]\ar_{[s_{x,y},z]+[s_{y,z,x]}+[s_{z,x},y]}[d]&&  0\ar@{=}[dd]\\
[-[y,x],z]+[-[z,y],x]+[-[x,z],y]\ar^\cong[d]&& \\
 -[[y,x],z]-[[x,z],y]-[[z,y],x]\ar^(.75){-j_{y,z,x}}[rr]&&0.}
\]

(iv) (\bf Jacobiator condition.\rm) To state the last compatibility condition, first observe that
$s,j$ together give a natural isomorphism
\[
j'_{x,y,z}:[[x,y],z]\to[x,[y,z]]-[y,[x,z]].
\]
Then one obtains a canonical isomorphism, called $f_{x,y,z,w}$
\[
\begin{array}{lcl}[[[x,y],z],w]&\stackrel{j'_{[x,y],z,w}}{\longrightarrow}&[[x,y],[z,w]]-[z,[[x,y],w]]\\
                            &\stackrel{j'_{x,y,[z,w]}-[z,j'_{x,y,w}]}{\longrightarrow}&([x,[y,[z,w]]]-[y,[x,[z,w]]])-([z,[x,[y,w]]]-[z,[y,[x,w]]]).
\end{array}
\]
Now we claim there are two canonical isomorphisms
\[
[[[x,y],z],w]+[[[y,z],x],w]+[[[z,x],y],w]\to 0.
\]
The first is
\[
[[[x,y],z],w]+[[[y,z],x],w]+[[[z,x],y],w]\longrightarrow[[[x,y],z]+[[y,z],x]+[[z,x],y],w]\stackrel{[j_{x,y,z},w]}{\longrightarrow}0.
\]
The second is
\[
\begin{array}{cl}&[[[x,y],z],w]+[[[y,z],x],w]+[[[z,x],y],w]\\
\stackrel{\sum f_{x,y,z,w}}{\longrightarrow}&\sum([x,[y,[z,w]]]-[y,[x,[z,w]]])-([z,[x,[y,w]]]-[z,[y,[x,w]]])\\
\longrightarrow &0,\end{array}
\]
where $\sum$ denotes the cyclic permutation of $x,y,z$. The fourth compatibility condition is that
these two isomorphisms coincide.

A pseudo Lie 2-algebra is called a Lie 2-algebra if there is a
functorial isomorphism
\[
s_x:[x,x]\cong 0.
\]
such that the following diagram commutes
\[
\xymatrix{[x,x]\ar^{s_{x,x}}[rr]\ar_{s_x}[dr]&&-[x,x]\ar^{-s_x}[dl]\\
&0& \ .}
\]
\end{dfn}

(Pseudo) Lie 2-algebras form a 2-category whose morphisms we are
going to define next.

\begin{dfn}\label{Lie hom}
Let $\frakg,\frakg'$ be two pseudo Lie 2-algebras. A
1-homomorphism $F:\frakg\to\frakg'$ consists of a 1-homomorphism
$F:\frakg\to\frakg'$ of $\calR$-linear Picard stacks and a
2-isomorphism of two $\calR$-bilinear 1-homomorphisms
\[
\theta: F([x,y])\cong [F(x),F(y)]
\]
such that the following two diagrams commute.
\[
\xymatrix{F([x,y])\ar^{\theta}[rr]\ar^{F(s)}[d]&&[F(x),F(y)]\ar^{s}[d]\\
F(-[y,x])\ar^{\tau}[r]&-F([y,x])\ar^{-\theta}[r]&-[F(y),F(x)], }
\]
\[
\xymatrix{F([[x,y],z]+[[y,z],x]+[[z,x],y])\ar^(0.75){F(j)}[r]\ar[d]&0\ar@{=}[d]\\
[[F(x),F(y)],F(z)]+[[F(y),F(z)],F(x)]+[[F(z),F(x)],F(y)]\ar^(0.9){j}[r]&0.}
\]
If both $\frakg$ and $\frakg'$ are Lie 2-algebras, $F$ is
called a Lie 2-algebra 1-homomorphism if in addition, the
following diagram commutes
\[
\xymatrix{F([x,x])\ar^{F(\theta)}[r]\ar^{F(s_x)}[d]&[F(x),F(x)]\ar^{s_{F(x)}}[d]\\
F(0)\ar^\cong[r]& 0.}
\]

A 2-morphism between two $(F,\theta), (F',\theta')$ are natural
isomorphisms between $F$ and $F'$ that are compatible with
$\theta$.
\end{dfn}

\begin{rmk}In \cite{BC}, Baez and Crans introduced another notion of Lie
2-algebras. They first introduced the notion of 2-vector spaces
over a field $k$, which from our point of view, are just those
strictly commutative $k$-linear Picard groupoids with all the
constraints (i.e. the associativity constraint, the commutativity
constraint, the unit constraint, and $a,b,c$ as in the Definition
\ref{R-linear Pic}) being identity maps. Then they defined the Lie
2-algebra as a 2-vector space, together with an asymmetric
bilinear morphism satisfying certain conditions. It is easy to see
that the Baez-Crans Lie 2-algebras, from our point of view, are
just those pseudo Lie 2-algebras such that the constraint $s$ is
the identity. Namely, the constraint $j$ in our Definition
\ref{Lie 2-alg} gives the jacobiator in the sense of Baez-Crans,
the compatibility condition (ii), (iii) in the Definition \ref{Lie
2-alg} amount to saying that $j$ is totally asymmetric, and the
jacobiator condition in the Definition \ref{Lie 2-alg} is
equivalent to the jacobiator identity given by Baez-Crans.
\end{rmk}

At this point, the definition of the 2-category of (pseudo) Lie
2-algebras is very complicated. However, according to Proposition
\ref{dic1}, a pseudo Lie 2-algebra should equivalent to a 2-term
complex of $\calR$-modules with additional structure. Not
surprisingly, this additional structure is just an
$L_\infty$-algebra structure on the complex. We should point out
Baez and Crans have already made this observation a precise
statement (cf. \cite{BC} Theorem 36) using their definition of Lie
2-algebras. We will improve their result in our more general
settings.

Let us recall the definition of $L_\infty$-algebras.

\begin{dfn}An $L_\infty$ algebra over $(\calT,\calR)$ is a graded
$\calR$-modules
\[\calL^\bullet=\oplus_k\calL^k\]
together with a system of $\calR$-module multilinear homomorphisms
\[l_n: (\calL^\bullet)^{\otimes n}\to\calL^\bullet, \quad 1\leq n<\infty\]
such that: (i) for any $n$, $\deg(l_n)=2-n$, and $l_n$ is totally
asymmetric, i.e. for homogeneous elements
$x_1,\ldots,x_n\in\calL^\bullet$,
\[l_n(x_{\sigma(1)},\ldots,x_{\sigma(n)})=\chi(\sigma)l_n(x_1,\ldots,x_n)\]
where $\sigma\in S_n$ is an element in the symmetric group and
$\chi(\sigma):=\chi(\sigma;x_1,\ldots,x_n)$ is defined via
\[x_{\sigma(1)}\wedge\cdots\wedge x_{\sigma(n)}=\chi(\sigma;x_1,\ldots,x_n)x_1\wedge\cdots\wedge x_n\]

(ii) for any $n$ and homogeneous elements
$x_1,\ldots,x_n\in\calL^\bullet$,
\[\sum_{i+j=n+1}\sum_{\substack{\sigma(1)<\cdots<\sigma(j)\\
\sigma(j+1)<\cdots<\sigma(n)}}(-1)^{ij}\chi(\sigma)l_i(l_j(x_{\sigma(1)},\ldots,x_{\sigma(j)}),x_{\sigma(j+1)},\ldots,x_{\sigma(n)})=0\]
\end{dfn}

We will be interested in the case where $\calL^\bullet$ is an
$L_\infty$-algebra with $\calL^i=0, i\neq -1,0$. It is explained
in \cite{BC} that 2-term $L_\infty$-algebras form a 2-category. We
will denote the subcategory consisting of those $\calL^\bullet$
such that $\calL^{-1}$ is injective.

Let $\calL^\bullet$ be a 2-term $L_\infty$-algebra. Then $l_1$
makes $\calL^\bullet$ a 2-term complex of $\calR$-modules. Then
$\on{ch}(\calL^\bullet)$ is a strictly commutative $\calR$-linear
Picard stack. $\on{ch}(l_2)$ gives $\on{ch}(\calL^\bullet)$ a
$\calR$-bilinear 1-homomorphism satisfying satisfying
$\on{ch}(l_2)(x,y)=-\on{ch}(y,x)$, and $\on{ch}(l_3)$ gives a
canonical isomorphism
\[\on{ch}(l_3):\on{ch}(\on{ch}(l_2)(x,y),z)+\on{ch}(\on{ch}(l_2)(y,z),x)+\on{ch}(\on{ch}(l_2)(z,x),y)\cong 0\]
It is easy to check that the strictly commutative $\calR$-linear Picard stack $\on{ch}(\calL^\bullet,l_1)$ together with the Lie bracket $\on{ch}(l_2)$ and the natural isomorphisms $\on{ch}(l_3)$ is indeed a
pseudo Lie 2-algebra in the sense of Definition \ref{Lie 2-alg}.

\begin{prop}\label{dic2}Assume that $\frac{1}{2}\in\calR$. Then the natural functor
$\calL^\bullet\mapsto\on{ch}(\calL^\bullet)$ is an equivalence
from the 2-category of 2-term $L_\infty$-algebras with $\calL^{-1}$ injective to the 2-category of Lie 2-algebras.
\end{prop}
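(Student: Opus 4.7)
The plan is to build on Proposition \ref{dic1} by upgrading its equivalence to incorporate the bracket data. The functor on underlying Picard stacks is already known to be an equivalence, so the remaining task is to show that $(l_2,l_3)$ on the $L_\infty$ side corresponds precisely to $([-,-],s,j)$ on the Lie 2-algebra side. In the forward direction, given a 2-term $L_\infty$-algebra $(\calL^\bullet,l_1,l_2,l_3)$, the chain map $l_2$ induces via $\on{ch}$ and the universal property of Deligne's tensor $\otimes_\calR$ an $\calR$-bilinear 1-homomorphism $[-,-]$ on $\on{ch}(\calL^\bullet)$. The strict total asymmetry of $l_2$ yields a tautological 2-isomorphism $s_{x,y}$ and moreover forces $l_2(x,x)=0$, producing the Lie 2-algebra refinement with $s_x$ the identity. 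The totally asymmetric degree $-1$ trilinear $l_3$, whose $l_1$-boundary is the Jacobi expression, is exactly the data of the 2-isomorphism $j$. Axioms (ii) and (iii) of Definition \ref{Lie 2-alg} then follow from the total asymmetry of $l_3$, and axiom (iv) (the jacobiator condition) follows from the $n=4$ relation of the $L_\infty$ structure.

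For the quasi-inverse, start with a Lie 2-algebra $(\frakg,[-,-],s,j)$, put $\calL^\bullet:=\frakg^\flat$, and take $l_1$ to be its differential. By Deligne's description of $\calR$-bilinear 1-homomorphisms recalled in \S\ref{Pic}, the bracket corresponds to a chain map $\tilde l_2:\calL^\bullet\otimes^{\bbL}_\calR\calL^\bullet\to\calL^\bullet$ defined up to homotopy; choose a strict representative. The 2-isomorphism $s$ then supplies a chain homotopy between $\tilde l_2$ and $-\tilde l_2\circ\tau$, where $\tau$ denotes the flip. This is where the hypothesis $\tfrac{1}{2}\in\calR$ enters crucially: the replacement $l_2:=\tfrac{1}{2}(\tilde l_2-\tilde l_2\circ\tau)$ is a strictly antisymmetric chain map representing the same bracket. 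The Lie 2-algebra axiom $s_x:[x,x]\cong 0$, together with its compatibility with $s_{x,x}$, guarantees (again via $\tfrac{1}{2}$) that the rectified $l_2$ vanishes strictly on the diagonal. In parallel, $j$ yields, after the analogous symmetrization, a totally asymmetric trilinear map $l_3$ of degree $-1$ whose $l_1$-boundary is the Jacobi expression for $l_2$.

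The main obstacle will be checking that the compatibility axioms (i)--(iv) of Definition \ref{Lie 2-alg} translate, in both directions, precisely into the $L_\infty$ relations for $n\leq 4$. Axiom (iv) is the most delicate: it encodes, through $()^\flat$, the $L_\infty$ relation on four inputs that constrains the interaction of $l_2$ and $l_3$, and a careful bookkeeping of signs and shuffles is required to identify the two expressions. This calculation was carried out in the strict setting by Baez--Crans \cite{BC}, and their argument extends to our setting once the above rectification has been performed, since the strictification reduces the comparison to their case. Functoriality and essential surjectivity on 1- and 2-morphisms (as defined in Definition \ref{Lie hom}) then follow from the corresponding statements in Proposition \ref{dic1} together with the naturality of the rectification; because $\tfrac{1}{2}\in\calR$ is available, all rectification choices are canonical up to a unique 2-isomorphism, which makes the quasi-inverse itself canonical up to 2-isomorphism.
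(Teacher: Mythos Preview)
Your overall strategy matches the paper's: rectify the bracket using $\tfrac{1}{2}$, then invoke Baez--Crans. Your formula $l_2=\tfrac{1}{2}(\tilde l_2-\tilde l_2\circ\tau)$ is exactly the paper's $l_2(x,y)=\tilde l_2(x,y)-\tfrac{1}{2}ds^\flat(x,y)$, since $\tilde l_2(x,y)+\tilde l_2(y,x)=ds^\flat(x,y)$.

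There is, however, a genuine gap in your treatment of $l_3$. You write that ``$j$ yields, after the analogous symmetrization, a totally asymmetric trilinear map $l_3$ of degree $-1$ whose $l_1$-boundary is the Jacobi expression for $l_2$.'' This is not correct: once you replace $\tilde l_2$ by the rectified $l_2$, the Jacobi expression changes, and the original $\tilde l_3$ (or any naive antisymmetrization of it) is no longer a homotopy witnessing it. The paper gives an explicit formula
\[
l_3(x,y,z)=\tilde l_3(x,y,z)-\tfrac{1}{2}\sum \tilde l_2(s^\flat(x,y),z)-\tfrac{1}{2}\sum s^\flat(\tilde l_2(x,y),z)+\tfrac{1}{4}\sum s^\flat(ds^\flat(x,y),z),
\]
with sums over cyclic permutations, and these cross terms between $s^\flat$ and $\tilde l_2$ are essential for $dl_3$ to equal the Jacobi expression in the \emph{new} $l_2$ and for $l_3$ to be totally antisymmetric. ``Analogous symmetrization'' does not produce them.

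A second point you elide: verifying the jacobiator condition (axiom (iv)) for the rectified structure is not a direct sign computation. The paper handles this by a transfer lemma: if a surjective $\calR$-linear 1-homomorphism of Picard stacks intertwines brackets and jacobiators (in the sense of Definition \ref{Lie hom}) and the source is a pseudo Lie 2-algebra, then the target satisfies the jacobiator condition. The paper applies this to the identity on $\on{ch}(\frakg^\flat)$ equipped with the two bracket structures, using $\on{ch}(-\tfrac{1}{2}s^\flat)$ as the intertwining 2-isomorphism. Only after this does the reduction to Baez--Crans go through.
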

\begin{proof}First observe that if $2$ is invertible in $\calR$, then every pseudo Lie 2-algebra has a natural structure as a Lie 2-algebra.

We only show that for any Lie 2-algebra $\frakg$,
there is an $L_\infty$-algebra $\calL^\bullet$ such that
$\on{ch}(\calL^\bullet)\cong\frakg$ as pseudo-Lie 2-algebras. Then
the proposition follows from \cite{BC}.

By applying the quasi-inverse $()^\flat$ of $\on{ch}$, we obtain a
2-term complex of $\calR$-modules $\frakg^\flat$ with an
$\calR$-bilinear morphism
\[\tilde{l}_2:=[-,-]^\flat:\frakg^\flat\times\frakg^\flat\to\frakg^\flat\]
\[s=s^\flat:\frakg^\flat\times\frakg^\flat\to\frakg^\flat[-1]\]
and $\calR$-trilinear morphism
\[\tilde{l}_3:=j^\flat:\frakg^\flat\times\frakg^\flat\times\frakg^\flat\to\frakg^\flat[-1]\]
satisfying:
\begin{enumerate}
\item $\tilde{l}_2(x,y)+\tilde{l}_2(y,x)=ds^\flat(x,y)$ for any
$x,y\in\frakg^\flat$, and $s^\flat(x,y)=s^\flat(y,x)$;

\item
$\tilde{l}_2(\tilde{l}_2(x,y),z)+\tilde{l}_2(\tilde{l}_2(y,z),x)+\tilde{l}_2(\tilde{l}_2(z,x),y)=d\tilde{l}_3(x,y,z)$;

\item$\tilde{l}_3(x,y,z)=\tilde{l}_3(y,z,x)$;

\item$\tilde{l}_3(x,y,z)+\tilde{l}_3(y,x,z)=\tilde{l}_2(s^\flat(x,y),z)+\tilde{l}_2(s^\flat(y,z),x)+\tilde{l}_2(s^\flat(z,x),y)$;

\item a very complicated formula that corresponds to the jacobiator condition in the definition of pseudo Lie 2-algebras.
\end{enumerate}
Let us define $l_1=d,
l_2(x,y)=\tilde{l}_2(x,y)-\frac{1}{2}ds^\flat(x,y)$ and
\[\begin{split}l_3(x,y,z)&=\tilde{l}_3(x,y,z)-\frac{1}{2}(\tilde{l}_2(s^\flat(x,y),z)+\tilde{l}_2(s^\flat(y,z),x)+\tilde{l}_2(s^\flat(z,x),y))\\
                         &-\frac{1}{2}(s^\flat(\tilde{l}_2(x,y),z)+s^\flat(\tilde{l}_2(y,z),x)+s^\flat(\tilde{l}_2(z,x),y))\\
                         &+\frac{1}{4}(s^\flat(ds^\flat(x,y),z)+s^\flat(ds^\flat(y,z),x)+s^\flat(ds^\flat(z,x),y))\end{split}\]

One can easily check that:
\begin{enumerate}
\item$l_2(x,y)+l_2(y,x)=0$;

\item$l_2(l_2(x,y),z)+l_2(l_2(y,z),x)+l_2(l_2(z,x),y)=dl_3(x,y,z)$;

\item$l_3(x,y,z)=l_3(y,z,x)$;

\item$l_3(x,y,z)+l_3(y,x,z)=0$.
\end{enumerate}
We can use $l_2$ to define a new Lie bracket on $\on{ch}(\frakg^\flat,d)$
\[[x,y]'=\on{ch}(l_2)(x,y).\]
Then $l_3$ gives a canonical 2-isomorphism of two $\calR$-trilinear 1-homomorphism
\[\on{ch}(l_3): [[x,y]',z]'+[[y,z]',x]'+[[z,x]',y]'\cong 0.\]
It is readily to check that $\on{ch}(\frakg^\flat,d)$ together with the Lie bracket $[-,-]'$ and the natural isomorphisms $\on{ch}(l_3)$ satisfies all the conditions in Definition \ref{Lie 2-alg} except the jacobiator condition. We claim that it also satisfies the jacobiator condition so that it is a Lie 2-algebra. To prove this, we need the following general lemma, whose proof is left to readers.

\begin{lem}Let $\frakg$ be a pseudo Lie 2-algebra, and $\frakg'$ be a strictly commutative $\calR$-linear Picard stack satisfying all the structures as a pseudo Lie 2-algebra except the jacobiator condition. If there is a surjective $\calR$-linear 1-homomorphism $F:\frakg\to\frakg'$ of Picard stacks together with a 2-isomorphism of two $\calR$-bilinear 1-homomorphisms
\[\theta: F([x,y])\cong[F(x),F(y)],\]
such that all the compatibility conditions in Definition \ref{Lie hom} hold, then $\frakg'$ satisfies the jacobiator condition and is therefore a pseudo Lie 2-algebra.
\end{lem}

Now, we apply the above lemma to the identity 1-homomorphism $\on{Id}:\on{ch}(\frakg^\flat)\to\on{ch}(\frakg^\flat)$. The first $\on{ch}(\frakg^\flat)$ is equipped with the Lie bracket given by $\on{ch}(\tilde{l}_2)$ and the natural isomorphisms $\on{ch}(\tilde{l}_3)$, and therefore is a pseudo Lie 2-algebra isomorphic to $\frakg$. The second $\on{ch}(\frakg^\flat)$ is equipped with the Lie bracket given by $\on{ch}(l_2)$ and the natural isomorphisms $\on{ch}(l_3)$, which satisfies all the structures as a pseudo Lie 2-algebra except the jacobiator condition. Now there is a canonical 2-isomorphism
\[\on{ch}(-\frac{1}{2}s^\flat):[x,y]'\to[x,y].\]
It is easy to show that it satisfies all the compatibility conditions in Definition \ref{Lie hom}. Therefore, we conclude by the above lemma that $\on{ch}(\frakg^\flat,d)$ together with  the Lie bracket given by $\on{ch}(l_2)$ and the natural isomorphisms $\on{ch}(l_3)$ is a pseudo Lie 2-algebra, which is isomorphic to $\frakg$ we begin with.

Finally, we can prove that $\frakg^\flat,l_1=d,l_2,l_3$ is a 2-term $L_\infty$-algebra. Namely, since $\on{ch}(\frakg^\flat,l_1=d,l_2,l_3)$ is a pseudo Lie 2-algebra, its jacobiator condition gives us the following identity
\[l_2(l_3(x,y,z),w)=\sum l_3(l_2(x,y),z,w)+l_3(x,y,l_2(z,w))-l_2(z,l_3(x,y,w)),\]
where the sum is taken over the cyclic permutations of $x,y,z$. This identity, together with the trivially satisfied identity
\[dl_3(x,y,z)=l_2(l_2(x,y),z)+l_2(l_2(y,z),x)+l_2(l_2(z,x),y)\]
implies that $\frakg^\flat,l_1=d,l_2,l_3$ is a 2-term $L_\infty$-algebra.
\end{proof}

\end{document}